\documentclass[12pt]{amsart} 

\usepackage{graphicx,amssymb,colordvi,textcomp,latexsym} 

\usepackage{tikz}

\usepackage[title]{appendix}

\usepackage{tikz}
\usetikzlibrary{arrows,shapes, positioning, matrix, patterns, decorations.pathmorphing, arrows.meta,automata}

\usepackage{color}
\usepackage{colortbl}
\usepackage{soul}
\usepackage{array}
\usepackage[hidelinks]{hyperref}
\usepackage{amsfonts}
\usepackage{amsmath}
\usepackage{amsthm}
\usepackage{adjustbox}
\usetikzlibrary{arrows,shapes, positioning, matrix, patterns, decorations.pathmorphing}
\usepackage{todonotes}
\usepackage{enumitem}

\setcounter{secnumdepth}{2}

\newcommand{\R}{\mbox{$\mathbb{R}$}}

\newcommand{\rset}[2]{\left\lbrace\, #1 \,\left|\ #2\right.\right\rbrace}

\newcommand{\set}[2]{\rset{#1}{#2}}

\newcommand{\sset}[1]{\left\lbrace #1\right\rbrace}
\newcommand{\tsset}[1]{\big\lbrace #1\big\rbrace}

\newcommand{\Hc}{\mathcal{H}}
\newcommand{\Dc}{\mathcal{D}}

\newtheorem{lemma}{Lemma}[section]

\newtheorem{prop}[lemma]{Proposition}
\newtheorem{thm}[lemma]{Theorem}

\theoremstyle{definition}
\newtheorem{Def}[lemma]{Definition}
\newtheorem{exam}[lemma]{Example}
\newtheorem{exams}[lemma]{Examples}

\theoremstyle{remark}
\newtheorem{rem}[lemma]{Remark}

\newcommand{\simI}{\mathrel{\sim_I}}
\newcommand{\simIo}{\mathord{\sim_I}}

\title[Coupled Cell Hypernetworks]{Network Dynamics with Higher-Order Interactions: Coupled Cell Hypernetworks for Identical Cells and Synchrony}
\author{Manuela Aguiar}
\address{Manuela Aguiar, 
Faculdade de Economia, Centro de Matem\'atica, Universidade do Porto, Rua Dr Roberto Frias, 4200-464 Porto, Portugal.}
\email{maguiar@fep.up.pt}
\author{Christian Bick}
\address{Christian Bick, 
Department of Mathematics, Vrije Universiteit Amsterdam, De Boelelaan 1111, 1081 HV Amsterdam, the Netherlands;
Department of Mathematics, University of Exeter, Exeter EX4 4QF, United Kingdom;
Institute for Advanced Study, Technical University of Munich, Lichtenbergstr.~2, 85748 Garching, Germany;
Mathematical Institute, University of Oxford, Oxford OX2 6GG, United Kingdom
}
\email{c.bick@vu.nl}
\author{Ana Dias}
\address{Ana Dias, 
Departamento de Matem\'atica, Faculdade de Ci\^encias, Centro de Matem\'atica, Universidade do Porto, Rua do Campo Alegre, 687, 4169-007 Porto, Portugal}
\email{apdias@fc.up.pt}



\begin{document}

\allowdisplaybreaks

\maketitle

\newcommand{\cG}{\mathcal{G}}
\newcommand{\cH}{\mathcal{H}}
\newcommand{\cT}{\mathcal{T}}

\newcommand{\N}{\mathbb{N}}

\newcommand{\ud}{\mathrm{d}}
\newcommand{\udi}{\,\ud}

\newcommand{\CB}[1]{\textcolor{blue}{#1}}
\newcommand{\AD}[1]{\textcolor{purple}{#1}}
\newcommand{\MA}[1]{\textcolor{orange}{#1}}

\begin{abstract} 
Network interactions that are nonlinear in the state of more than two nodes---also known as higher-order interactions---can have a profound impact on the collective network dynamics. Here we develop a coupled cell hypernetwork formalism to elucidate the existence and stability of (cluster) synchronization patterns in network dynamical systems with higher-order interactions. 
More specifically, we define robust synchrony subspace for coupled cell hypernetworks whose coupling structure is determined by an underlying hypergraph and describe those spaces for general such hypernetworks.
Since a hypergraph can be equivalently represented as a bipartite graph between its nodes and hyperedges, we relate the synchrony subspaces of a hypernetwork to balanced colorings of the corresponding incidence digraph.
\end{abstract}


\section{Introduction}

Coupled dynamical processes are ubiquitous in the world and can often be modeled by systems of ordinary differential equations (ODEs). The coupled cell network formalism developed by Golubitsky, Stewart and collaborators~\cite{SGP03, GST05} and Field~\cite{F05} captures the network interactions by a directed graph~$\cG$ to elucidate how the network structure shapes the collective dynamics. More precisely, let~$V=\R^d$ for some~$d\in\N$ denote the state space of each cell $i\in\sset{1, \dotsc, n}$. In a classical coupled cell system, the evolution state~$x_i$ of cell~$i$ is determined by an interaction function~$f:V^q\to V$. If, for example,
\begin{equation}\label{eq:CoupCellEx}
\dot x_i := \frac{\ud x_i}{\ud t} = f(x_i; x_j, x_k, x_l)
\end{equation}
then $(j,i), (k,i), (l,i)$ are the edges with head~$i$ of~$\cG$ since, for any~$f$, the evolution of cell~$i$ depends on the cells~$j,k,l$. The main questions regarding coupled cell networks relate to how the network structure influences the dynamics and bifurcations of the coupled cell system without making specific assumptions on~$f$. By contrast, in many applications the links in the networks have associated numerical values called \emph{weights} to represent, for example, the strength or the signal of the connection between the nodes associated with the edges.
These can be realized as coupled cell networks with \emph{additive input structure}; cf.~\cite{F15, BF17, ADF17, AD18}. Consider the graph~$\cG$ associated with~\eqref{eq:CoupCellEx} and let $(w_{ij})\in\R^{n\times n}$ be a weight matrix. For $h:V\to V$ and $g:V\times V\to V$, cell~$i$ of the corresponding coupled cell network with additive coupling structure evolves according to
\begin{equation}\label{eq:CoupCellAddEx}
\dot x_i := h(x_i) + w_{ij}g(x_i; x_j) + w_{ik}g(x_i; x_k)  + w_{il}g(x_i; x_l),
\end{equation}
where~$g$ determines the \emph{pairwise interactions} between cells. In this restricted framework, adding and removing edges is natural by adjusting the corresponding weights. Networks of Kuramoto phase oscillators and pulse coupled systems are examples of coupled cell systems with additive input structure.

Note that the complexity of the interactions differ in traditional coupled cell networks~\eqref{eq:CoupCellEx} and those with additive coupling structure~\eqref{eq:CoupCellAddEx}. While the former allows for generic, nonlinear interactions between all the input nodes through~$f$, additive coupling structure only allows for interactions between pairs of nodes. Recent research has highlighted the dynamical importance of nonpairwise interactions between nodes; cf.~\cite{Battiston2020,Bick2021} for reviews. For example, in networks that describe the competitive interactions between species, one has to take into account how the interaction between two species is modulated by a third species (a triplet interaction) to explain the competition dynamics. Similarly, incorporating nonpairwise interactions in phase oscillator networks exhibits dynamics that would not arise in standard Kuramoto-type equations with pairwise interactions~\cite{AR16, BAR16}.

In this work, we introduce a new class of coupled cell networks---\emph{coupled cell hypernetworks}---whose structure is determined  by a \emph{(directed) hypergraph}. A hypergraph is a generalization of a graph in which a hyperedge can join any number of nodes, that is, the \emph{directed hyperedges} are from a set of~$k$ nodes (cells) to a set of~$l$ nodes (cells). 
This coupling structure captures that the evolution of each of the~$l$ cells depends (typically nonlinearly) on an interaction involving a set of~$k$ cells. 
Directed hypergraphs are used to model problems arising in, for example, operations research, computer science and discrete mathematics, to describe relationships  between two sets of objects. See for example Ausiello and Laura~\cite{AL17} and references therein. See, also, Johnson {\it et al.}~\cite{JI07}, Kim~{\it et al.}~\cite{KHZ14} and Johnson~\cite{J16}. 
We shall remark that in some literature, as for example in Sorrentino~\cite{S12}, the terminology of hypernetwork is used, not to denote a hypergraph, as in our case here, but to denote a graph that has more than one edge type, that is, with more than one adjacency matrix. We illustrate our setup in an example.

\begin{exam} \label{ex:first}
Consider the following system of ODEs on $n=6$ state variables $x_i$, $i\in\{1,\dotsc,n\}$:
\begin{subequations}\label{ex:OO}
\begin{align}
\dot{x}_1 &= f(x_1) + Q_2(x_1;x_5,x_2)\\
\dot{x}_2 &= f(x_2) + Q_1(x_2;x_2)\\
\dot{x}_3 &= f(x_3) + Q_1(x_3;x_4) + Q_2(x_3;x_4,x_6)\\
\dot{x}_4 &= f(x_4) + Q_1(x_4;x_2)\\
\dot{x}_5 &= f(x_5) + Q_2(x_5;x_4,x_6) \\
\dot{x}_6 &= f(x_6) + Q_2(x_6;x_1,x_2),
\end{align}
\end{subequations}
where $f:\, V \to V$, $Q_1:\, V^2 \to V$,  $Q_2:\, V^3 \to V$ are smooth functions. 
Assume that~$Q_2$ is symmetric under permutation of the last two coordinates, that is, $Q_2(y;z,w) = Q_2(y;w,z)$ for all $y,z,w \in V$.
We might interpret this system as a coupled cell system with form consistent with a hypergraph~$\Hc$ shown on the left of Figure~\ref{figure_example1}: Each node of the hypergraph represents a cell, and each hyperedge represents an interaction from a cell---or a group of cells---to a cell or a group of cells.
The state of cell~$i$ is determined by~$x_i\in V$ and its evolution by the corresponding differential equation; in the following we write $x=(x_1, \dotsc, x_n)$ for the state vector. 
The coupling functions~$Q_1$ and~$Q_2$ determine the influence of one or two cells, respectively, onto another cell along the directed hyperedges.

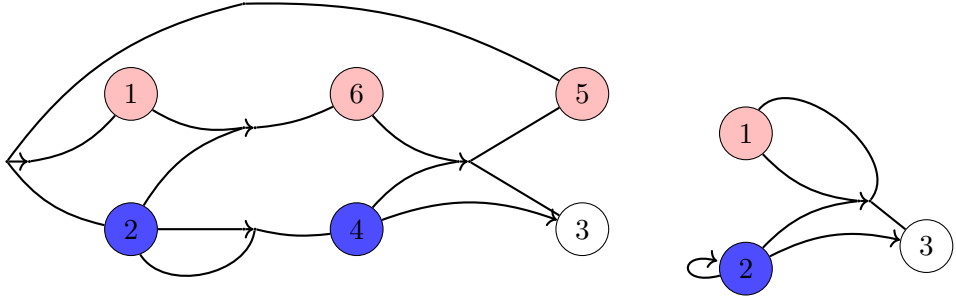
\begin{figure}
\begin{center}
\begin{tabular}{ll}
\begin{tikzpicture}
 [scale=.15,auto=left, node distance=1.5cm, 
 ]
 \node[fill=pink,style={circle,draw}] (n1) at (4,6)  {\small{1}};
 \node[fill=blue!70,style={circle,draw}] (n2) at (4,-6) {\small{2}};
 \node[fill=pink,style={circle,draw}] (n6) at (24,6) {\small{6}};
   \node[fill=blue!70,style={circle,draw}] (n4) at (24,-6)  {\small{4}};
 \node[fill=pink,style={circle,draw}] (n5) at (44,6)  {\small{5}};
  \node[fill=white,style={circle,draw}] (n3) at (44,-6)  {\small{3}};
 
 \node [fill=black,style={circle,scale=0.1}] (e1) at (14,3) { };
 \node [fill=black,style={circle,scale=0.1}] (e11) at (15,3) { };
 \node [fill=black,style={circle,scale=0.1}] (e5) at (14,-6) { };
 \node [fill=black,style={circle,scale=0.1}] (e55) at (15,-6) { };
    \node [fill=black,style={circle,scale=0.1}] (e2) at (33,0) { };
     \node [fill=black,style={circle,scale=0.1}] (e22) at (34,0) { };
     \node [fill=black, style={circle, scale=0.1}] (e3) at (-7,0) { };
      \node [fill=black, style={circle, scale=0.1}] (e33) at (-5,0) { };
      \node [fill=black,style={circle,scale=0.1}] (e4) at (14,14) { };

 %
\path
        (n1) [-]  edge[bend right=20,thick]  node { } (e1)
        (n2) [-]  edge[bend left=20,thick]  node { } (e1)
         (e1) [->] edge[thick] node {}   (e11)   
        (e11) [-] edge[bend right=10,thick]      node  {}  (n6)
        (n2) [-] edge[bend left=0, thick]  node {}   (e5)
             (e5) [->] edge[thick] node {}   (e55)  
              (e55) [-] edge[bend right=10,thick]      node  {}  (n4)
         (e55)  [-]  edge[bend left=70,thick] node {} (n2)
        (n4) [->] edge[bend left=20, thick]  node {}   (n3)      
        (n6) [-]  edge[bend right=20,thick]  node { } (e2)
        (n4) [-]  edge[bend left=20,thick]  node { } (e2)
         (e2) [->] edge[thick] node {}   (e22)   
          (e22) [-] edge[thick] node {}   (n5)   
              (e22) [-] edge[thick] node {}   (n3)   
  (n5) [-] edge[bend right=15,thick]  node {} (e4)
        (e4) [-] edge[bend right=20,thick] node { } (e3)
         (n2) [-]  edge[bend left=20,thick] node { } (e3)
        (e3)  [->]  edge[thick] node {} (e33)
        (e33)  [-]  edge[bend right=20,thick] node {} (n1); 
 \end{tikzpicture} & 
 \quad
 \begin{tikzpicture}
 [scale=.15,auto=left, node distance=1.5cm, 
 ]
 \node[fill=pink,style={circle,draw}] (n1) at (4,6)  {\small{1}};
 \node[fill=blue!70,style={circle,draw}] (n2) at (4,-6) {\small{2}};
  \node[fill=white,style={circle,draw}] (n3) at (20,-4)  {\small{3}};
     \node [fill=black,style={circle,scale=0.1}] (e3) at (14,0) { };
        \node [fill=black,style={circle,scale=0.1}] (e33) at (15,0) { };
\path
        (n1) [-]  edge[bend right=20,thick] node { } (e3)
         (n2) [-]  edge[bend left=20,thick] node { } (e3)
         (e3) [->] edge[thick] node {} (e33)
        (e33)  [-]  edge[bend right=90,thick] node {} (n1)   
         (n2)  [->]  edge[loop left=90,thick] node {} (n2)
         (n2) [->] edge[bend left=20, thick]  node {}   (n3) 
       (e33)  [-]  edge[thick] node {} (n3);    
       
 \end{tikzpicture} 
 \end{tabular}
 \end{center}
 \caption{Examples of two directed hypergraphs: Nodes (cells) are shown as circles and directed hyperedges as arrows that can have multiple nodes in the tail (lines from multiple nodes leading up to the arrow) and multiple nodes in the head (lines from the arrow to the receiving cells). Assume all hyperedges have weight~$1$. The shading of the nodes/cells corresponds to the synchrony pattern described in Example~\ref{ex:first}.} \label{figure_example1}
\end{figure}

Now consider subsets of the phase space where cells are \emph{synchronized}, that is, there are distinct cells whose states take the same value; sometimes this is also referred to as \emph{cluster synchronization}. Some synchronization patterns are \emph{robust}, that is, they are dynamically invariant subsets of the phase space for any coupling functions. In our example, consider the set $\set{x}{x_1 = x_6 = x_5,\, x_2 = x_4}$, where cells~$1$,~$6$,~$5$ as well as~$2$,~$4$ are synchronized. Note that this set is invariant under the flow of the above equations and the dynamics restricted to this space are given by
\begin{subequations}\label{ex:OOQuot}
\begin{align}
\dot{x}_1 &= f(x_1) + Q_2(x_1;x_1,x_2)\\
\dot{x}_2 &= f(x_2) + Q_1(x_2;x_2) \\
\dot{x}_3 &= f(x_3) + Q_1(x_3;x_2) + Q_2(x_3;x_1,x_2).
\end{align}
\end{subequations}
These are again dynamical equations that can be interpreted as a coupled cell hypernetwork; one underlying hypergraph is shown in Figure~\ref{figure_example1} on the right.

This illustrates some of the main questions we will address here: Given a set of dynamical equations, such as~\eqref{ex:OO}, what is the underlying hypergraph? Given a hypergraph~$\Hc$ and an associated coupled cell hypernetwork, how can we identify the robust synchrony subspaces? Given a robust synchrony subspace, how can we describe the dynamics on the robust synchrony subspace as a coupled cell hypernetwork and how does this relate to the original hypergraph~$\Hc$?
\hfill $\Diamond $
\end{exam}

The main contribution of this paper is to develop the framework of coupled cell hypernetworks and apply this framework to analyze the existence and stability of synchrony in hypernetwork dynamical systems. While the dynamical equations are similar to those in~\cite{Mulas2020,Salova2021a,Salova2021}, we explicitly discuss the role of the interaction functions~$Q_k$. Placed within the language of coupled cell networks, our approach allows to use the general ideas developed in~\cite{AD18} for the analysis of network dynamical systems with higher-order interactions. Specifically, the manuscript is organized as follows. Section~\ref{Sec:hypergraph} reviews some definitions and notation on directed weighted hypergraphs. The coupled cell hypernetwork formalism for coupled differential equations is introduced in Section~\ref{Sec:hypernet}. In Section~\ref{sec:sync} we define robust synchrony subspace for 
hypernetworks, describe those spaces for general hypernetworks and we relate  them to the balanced colorings of the corresponding incidence digraph. In Section~\ref{sec:examples} we discuss a class of hypernetworks where we can relate stability of equilibria taking into account the nonpairwise interactions. We see already for this class of examples that the nonpairwise terms cannot be  disregarded. We finish with Section~\ref{sec:discussion} discussing the main points presented in this work and some questions that arise naturally.

\section{Preliminaries on directed hypergraphs}\label{Sec:hypergraph}

In this section, we recall some notation and definitions on directed hypergraphs; see, for example,~\cite{GLP93}. An hypergraph is a generalization of a graph where the graph edges are replaced by hyperedges that can join any number of nodes. In contrast to traditional directed hypergraphs, we allow for the tails to be multisets, i.e., a set that can contain an element more than once. Let $\#A$ denote the cardinality of a (multi)set~$A$.

\begin{Def}
A \emph{directed hypergraph} $\cH =(C,E)$  consists of a (finite) set of \emph{nodes}~$C$ and a set of \emph{directed hyperedges}~$E$. A \emph{directed hyperedge~$e$} is a pair $\left(T(e),H(e)\right)$, where the \emph{tail}~$T(e)$ of~$e$ is a multiset of elements of~$C$ and the \emph{head}~$H(e)$ of~$e$ is a subset of $C$; we assume that both $T(e)$ and $H(e)$ are nonempty. 
\hfill $\Diamond$
\end{Def}
 
Note that, a directed hypergraph where any hyperedge~$e$ satisfies the conditions $\# T(e) = \# H(e) = 1$ is a standard \emph{directed graph}.
 
In the above definition of directed hypergraph, we do not exclude the situation of having hyperedges $e$ where the tail multiset $T(e)$ has repetition of nodes. This fact is due to the association of hypergraphs with coupled cell hypernetworks and it will be clarified in Section~\ref{Sec:hypernet}.
 
If $\cH =(C,E)$ is a hypergraph, we also write~$C(\cH)=C$ or $E(\cH)=E$ to denote the set of nodes and hyperedges, respectively.
 
\begin{exam} 
The directed hypergraph $\cH=(C,E)$  in Figure~\ref{h_net_1_v2} has node set $C = \{ 1, 2, 3, 4\}$ and hyperedge set  
\[E = \left\{e_1 = \left( \{1,2\}, \{3,4\}\right),\ e_2 = \left( \{1\}, \{4\} \right), e_3 =   \left( \{1\}, \{1\}\right)
\right\}.\] 
\hfill $\Diamond $
\end{exam}

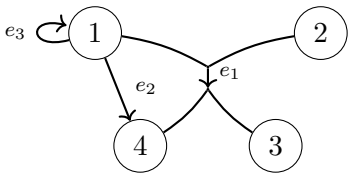
\begin{figure}
\begin{center}
\begin{tikzpicture}
 [scale=.15,auto=left, node distance=1.5cm, 
 ]
 \node[fill=white,style={circle,draw}] (n1) at (4,0) {\small{1}};
  \node[fill=white,style={circle,draw}] (n2) at (24,0) {\small{2}};
 \node[fill=white,style={circle,draw}] (n3) at (20,-10)  {\small{3}};
 \node[fill=white,style={circle,draw}] (n4) at (8,-10)  {\small{4}};
 \node [fill=black,style={circle,scale=0.1}] (e1) at (14,-3) { };
 \node [fill=black,style={circle,scale=0.1}] (e11) at (14,-5) { };
\path
        (n1) [-]  edge[bend left=10,thick]  node { } (e1)
        (n2) [-]  edge[bend right=10,thick]  node { } (e1)
        (e1) [->] edge[thick] node [near start] {{\tiny $e_1$}} (e11)
        (e11) [-] edge[bend right=10,thick] node {} (n3) 
         (e11) [-] edge[bend left=10, thick] node {} (n4) 
       (n1) [->] edge[thick] node [near end]  {{\tiny $e_2$}} (n4)  
       (n1)  [->]  edge[loop left=90,thick] node {{\tiny $e_3$}} (n1)
        ;
 \end{tikzpicture} 
\end{center}
\caption{A directed hypergraph with four nodes and three hyperedges labeled by $e_1, e_2, e_3$.}\label{h_net_1_v2}
\end{figure}

\begin{Def} 
Consider a directed hypergraph $\Hc=(C,E)$ with set of~$n$ nodes $C=\{1, \dotsc, n\}$ and set of~$m$ directed hyperedges $E=\{e_1, \dotsc, e_m\}$. Let $w: E \to \R$ be the weight function that associates a weight $w_j$ to each hyperedge $e_j$, $j=1,\dotsc,m$. The \emph{weight matrix} $W\in\R^{n \times m}$  of~$\cH$ is the $n \times m$ matrix, where the $ij$th entry is the weight~$w_j$ of the hyperedge~$e_j$ if node~$i$ belongs to the head of the directed hyperedge~$e_j$, and~$0$ otherwise. A \emph{weighted directed hypergraph~$(\Hc,W)$} consists of~$\Hc$ and a weight matrix~$W$. 
\hfill $\Diamond$
\end{Def}

Note that the definition of weight matrix of a directed hypergraph is distinct from that of  the weighted adjacency matrix of a standard $n$-node directed graph, which is the $n \times n$ matrix, where the $ij$th entry is the weight~$w_{ij}$ of the directed edge from node $j$ to node~$i$ if there is a directed edge from~$j$ directed to node~$i$, and~$0$ otherwise.

\begin{exam}
Consider the weighted directed graph~$\cG$ with nodes $C(\cG)=\{1,2,3,4\}$ and edges \[E(\cG)=\{ \left(\{1\},\{3\}\right),\, \left(\{1\},\{4\}\right),\, \left(\{2\},\{3\}\right),\, \left(\{2\},\{4\}\right)\}\] on the left of Figure~\ref{net1}. The weighted adjacency matrix of the graph~$\cG$ is:  
\[\left[
\begin{array}{cccc}
0 & 0 & 0  & 0 \\
0 & 0 & 0  & 0 \\
a & b & 0 & 0\\
c & d & 0 & 0
\end{array}
\right]\, .
\]
Consider now the weighted directed hypergraph on the right of  Figure~\ref{net1} with two hyperedges: $e_1 = \left( \{1,2\}, \{3\}\right)$ and $e_2 =\left( \{1,2\}, \{4\} \right)$.  The corresponding weight matrix is:  
\[
\left[ 
\begin{array}{cc}
0        & 0 \\
0        & 0\\
a + b & 0 \\
0       & c + d 
\end{array}
\right].
\]
\hfill $\Diamond $
\end{exam}

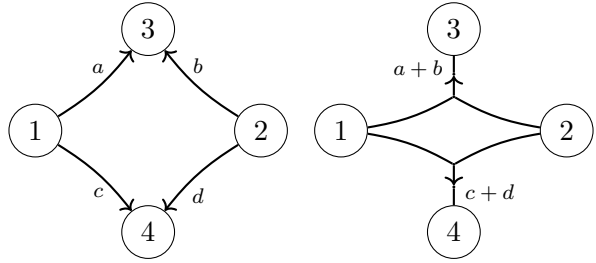
\begin{figure}
\begin{center}
\begin{tabular}{cc} 
\begin{tikzpicture}
 [scale=.15,auto=left, node distance=1.5cm, 
 ]
 \node[fill=white,style={circle,draw}] (n1) at (4,0) {\small{1}};
  \node[fill=white,style={circle,draw}] (n2) at (24,0) {\small{2}};
 \node[fill=white,style={circle,draw}] (n3) at (14,9)  {\small{3}};
 \node[fill=white,style={circle,draw}] (n4) at (14,-9)  {\small{4}};
 \draw[->, thick] (n1) edge[bend right=10,thick]  node  [above=1pt] {{\tiny $a$}} (n3); 
 \draw[->, thick] (n2) edge[bend left=10,thick] node  [above=1pt] {{\tiny $b$}} (n3); 
\draw[->, thick] (n1) edge[bend left=10,thick] node  [below=1pt]  {{\tiny $c$}} (n4); 
\draw[->, thick] (n2) edge[bend right=10,thick] node  [below=1pt]  {{\tiny $d$}} (n4) ; 
\end{tikzpicture}  
&
\begin{tikzpicture}
 [scale=.15,auto=left, node distance=1.5cm, 
 ]
 \node[fill=white,style={circle,draw}] (n1) at (4,0) {\small{1}};
  \node[fill=white,style={circle,draw}] (n2) at (24,0) {\small{2}};
 \node[fill=white,style={circle,draw}] (n3) at (14,9)  {\small{3}};
 \node[fill=white,style={circle,draw}] (n4) at (14,-9)  {\small{4}};
 \node [fill=black,style={circle,scale=0.1}] (e1) at (14,3) { };
  \node [fill=black,style={circle,scale=0.1}] (e11) at (14,5) { };
  \node [fill=black,style={circle,scale=0.1}] (e2) at (14,-3) { };
   \node [fill=black,style={circle,scale=0.1}] (e22) at (14,-5) { };
\path
        (n1) [-]  edge[bend right=10,thick] node { } (e1)
        (n2) [-]  edge[bend left=10, thick] node { } (e1)
        (e1) [->] edge[thick] node  {} (e11)
        (e11) [-] edge[thick] node [near start] {{\tiny $a+b$}} (n3) 
        (n1) [-]  edge[bend left=10,thick]  node { } (e2)
        (n2) [-]  edge[bend right=10,thick] node { } (e2)
        (e2) [->] edge[thick] node {}   (e22) 
        (e22) [-] edge[thick] node  [near start]  {{\tiny $c+d$}} (n4)      
        ;
 \end{tikzpicture} 
\end{tabular}
\end{center}
\caption{(Left) A weighted directed graph  with four nodes and four edges. (Right) A weighted directed hypergraph with four nodes and two hyperedges.}\label{net1}
\end{figure}

To every directed hypergraph~$\cH$ can be associated a bipartite digraph~$\Dc_{\cH}$, called the \emph{incidence digraph}, \emph{Levi digraph}, or \emph{K{\"o}nig digraph} of~$\cH$, whose nodes are the nodes and the hyperedges of~$\cH$; see, for example,~\cite{AS2021}. Here, we generalize this concept to weighted directed hypergraphs (where the tails of the hyperedges can be multisets).

\begin{Def} \label{def:bidigraph}
Consider a weighted directed hypergraph $(\Hc, W)$ with the set of $n$ nodes $C=\{1, \dotsc, n\}$ and a set of $m$ directed hyperedges $E=\{e_1, \dotsc, e_m\}$.  Let $m^j_i$ be the multiplicity of the node $i$ in the tail multiset $T(e_j)$. The {\em weighted incidence digraph} $\Dc_{\cH}$ of $\cH$ is the weighted bipartite digraph with node set $C \cup E$ and edges such that: there is a directed edge from node $i$ to the hyperedge $e_j$ with weight $m^j_i$ if and only if $i \in T(e_j)$; there is a directed edge with weight $w_j$ from the hyperedge $e_j$ to the node $i$ if and only if $i \in H(e_j)$.
\hfill $\Diamond$
\end{Def}

The adjacency matrix $A_{{\Dc_{\cH}}}$ of the weighted incidence digraph $\Dc_{\cH}$ associated with a weighted directed hypergraph $\cH$ has the block structure
\[
A_{\small{\Dc_{\cH}}} =
\left[
\begin{array}{c|c}
0_{n \times n} & W \\
\hline
T & 0_{m \times m}
\end{array}
\right],
\]
where 
$W\in M_{n \times m}(\R)$ is the weight matrix for~$\cH$ and the matrix $T \in M_{m \times n}(\R)$ 
describes the multiplicities of the nodes in the tail multisets of the hyperedges of $\cH$.

\begin{exam}
Consider  the directed hypergraph $\Hc=(C,E)$ on the left in Figure~\ref{figure_example1}. Thus $C = \{ 1, \dotsc, 6\}$ and 
\begin{align*}
e_1 &= \left( \{ 2,5\}, \{1 \} \right), & e_2 &= \left( \{ 2\}, \{2,4 \} \right), & e_3 &= \left( \{ 1,2\}, \{6 \} \right),\\
e_4 &= \left( \{ 4,6\}, \{3, 5 \} \right), & e_5 &= \left( \{ 4\}, \{3 \} \right).
\end{align*}
The incidence digraph $\Dc_{\cH}$ is represented in Figure~\ref{B_figure_example1} and its adjacency matrix is given by
\[
A_{{\Dc_{\cH}}} =
\left[
\begin{array}{c|c}
0_{6 \times 6} & W \\
\hline
T & 0_{5 \times 5}
\end{array}
\right]
\]
with 
\[
W = 
\left[
\begin{array}{ccccc}
1 & 0& 0 & 0 & 0  \\
0 & 1& 0 & 0 & 0  \\
0 & 0& 0 & 1 & 1  \\
0 & 1& 0 & 0 & 0  \\
0 & 0& 0 & 1 & 0  \\
0 & 0& 1 & 0 & 0  
\end{array}
\right]
\qquad
\mbox{ and }
\qquad
T = 
\left[
\begin{array}{cccccc}
0 & 1 & 0 & 0 & 1 & 0 \\
0 & 1 & 0 & 0 & 0 & 0 \\
1 & 1 & 0 & 0 & 0 & 0 \\
0 & 0 & 0 & 1 & 0 & 1 \\
0 & 0 & 0 & 1 & 0 & 0 
\end{array}
\right]\, .
\]
\hfill $\Diamond $
\end{exam}

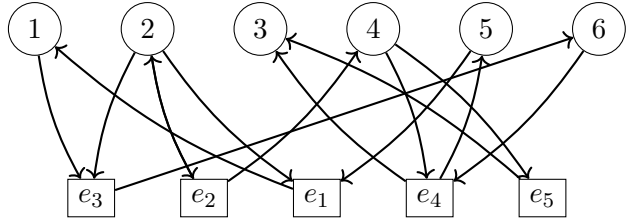
\begin{figure}
\begin{center}
\begin{tikzpicture}
 [scale=.15,auto=left, node distance=1.5cm, 
 ]
 \node[fill=white,style={circle,draw}] (n1) at (4,15) {\small{1}};
 \node[fill=white,style={circle,draw}] (n2) at (14,15) {\small{2}};
 \node[fill=white,style={circle,draw}] (n3) at (24,15)  {\small{3}};
 \node[fill=white,style={circle,draw}] (n4) at (34,15)  {\small{4}};
 \node[fill=white,style={circle,draw}] (n5) at (44,15) {\small{5}};
 \node[fill=white,style={circle,draw}] (n6) at (54,15) {\small{6}};
 \node[fill=white,style={rectangle,draw}] (e3) at (9,0) {\small{$e_3$}};
 \node[fill=white,style={rectangle,draw}] (e2) at (19,0) {\small{$e_2$}};
 \node[fill=white,style={rectangle,draw}] (e1) at (29,0) {\small{$e_1$}};
\node[fill=white,style={rectangle,draw}] (e4) at (39,0) {\small{$e_4$}};
\node[fill=white,style={rectangle,draw}] (e5) at (49,0) {\small{$e_5$}};

\path
       (n2) [->]  edge[bend right=10,thick] node  {} (e1)
        (n5) [->]  edge[bend left=10,thick] node  {}  (e1)
       (n2) [->]  edge[bend right=10,thick] node  {} (e2)
        (n1) [->]  edge[bend right=10,thick] node  {}  (e3)
       (n2) [->]  edge[bend right=10,thick] node  {} (e3)
        (n6) [->]  edge[bend left=10,thick] node  {}  (e4)
       (n4) [->]  edge[bend left=10,thick] node  {}  (e4)
       (n4) [->]  edge[bend left=10,thick] node  {}  (e5)
        (e1) [->]  edge[bend left=10,thick] node {}  (n1)
        (e2) [->]  edge[bend left=10,thick] node {}  (n2)
        (e2) [->]  edge[bend right=10,thick] node {}  (n4)
        (e3) [->]  edge[thick] node {}  (n6)
        (e4) [->]  edge[bend right=10,thick] node {}  (n5)
        (e4) [->]  edge[bend left=10,thick] node {}  (n3)
        (e5) [->]  edge[bend right=10,thick] node {} (n3)
        ;
 \end{tikzpicture} 
\end{center}
\caption{The incidence digraph~$\Dc_{\cH}$ associated with the directed hypergraph in Figure~\ref{figure_example1} (left).}\label{B_figure_example1}
\end{figure}

\newcommand{\FS}{\mathrm{FS}}
\newcommand{\BS}{\textrm{BS}}
\newcommand{\Br}{\textrm{B}}

The \emph{forward star} and the \emph{backward star} of a node~$v$ are the sets of hyperedges defined by $\FS(v) = \set{e}{v \in T(e)}$ and $\BS(v) = \set{e}{v \in H(e)}$, respectively.

\begin{rem}
Note that, in network theory,  the \emph{input set} of a node  in a directed network corresponds  to the backward star of the node. 
\hfill $\Diamond$
\end{rem}

We can  define paths and connectivity in hypergraphs. A \emph{directed path} of lenght $q$ between the nodes~$v_1$ and~$v_{q+1}$ is a sequence of nodes, $v_1, v_2, \dotsc, v_{q+1}$, and directed hyperedges, $e_1, e_2, \dotsc, e_q$, where 
\[
v_1 \in T(e_1),\  v_{q+1} \in {\mathcal H}(e_q), \mbox{ and }  v_j \in {\mathcal H}\left(e_{j-1}\right) \cap T\left(e_{j}\right)  \mbox{ for } j=2, \dotsc, q\, .
\]
The nodes $v_1$ and $v_{q+1}$ are said to be \emph{connected}.   An hypergraph is \emph{(weakly) connected} if every pair of nodes in the hypergraph is connected by a path replacing all of its directed hyperedges with undirected hyperedges.

In the following we assume that all hypergraphs have nonempty node and hyperedge sets and are connected.

\section{Coupled cell hypernetwork formalism} \label{Sec:hypernet}

Weighted directed hypergraphs provide the backbone for the coupled cell hypernetwork formalism that we develop in this work. A \emph{hypernetwork} is a weighted directed hypergraph, where each node $i\in C$ comes with a phase space~$V = \R^{d(i)}$ and internal dynamics~$f_i: V \to V$---we refer to a node with a phase space and internal dynamics as a \emph{cell}.
For simplicity, we assume that all cells are identical, i.e., $V = \R^{d}$ and $f_i=f$ for all~$i$.
Thus, we will use the same symbol for each node/cell in a graphical representation of the network.
In slight abuse of notation and terminology, we write $(\cH, W)$ for the hypernetwork, i.e., the weighted, directed hypergraph together with the data on the phase space, and use the words node/cell interchangeably.

\newcommand{\abs}[1]{\left|#1\right|}

\subsection{Coupled cell hypernetworks}
\label{sec:Hypernetworks}

Fix a hypergraph~$\mathcal{H}=(C, E)$ with nodes~$C$ and hyperedges~$E$; 
in the following all hypergraphs have the same set of nodes~$C$. 
Recall that the backward star of a cell~$c$ is denoted by~$\BS(c)$. For cell~$c$ let
\[
\BS_k(c) = \set{ e \in \BS(c)}{\# T (e) = k} 
\]
denote the set of hyperedges whose tail has cardinality~$k$ and let
\[
\Br(c) = \set{ k}{\exists\ e \in \BS(c) \mbox{ such that } \# T (e) = k} = \set{ k}{\BS_k(c) \ne \emptyset}
\]
be the possible cardinalities. This yields a partition of the backward star since 
\[
\bigcup_{k\in\Br(c)}\BS_k(c) = \BS(c).
\]
Finally, write
\begin{equation}
\label{eq:Orders}
\Br(\mathcal{H}) = \bigcup_{c\in C}\Br(c) = \set{k}{\exists\ e \in E \mbox{ such that } \# T (e) = k}.
\end{equation}

\begin{exam}
Recall the hypergraph~$\Hc$ on the right of Figure~\ref{figure_example1}. We have that 
\begin{align*}
\Br(1) &=  \{ 2\}, & \Br(2) &=  \{ 1\}, &   \Br(3) &=  \{ 1, 2\}  &   &\mbox{ and }  &\Br(\Hc) &=  \{ 1, 2\}.
\end{align*}
\hfill $\Diamond $
\end{exam}

We will now define a set of dynamical equations that is compatible with the hypergraph~$\cH$. For an hyperedge $e\in E$ with weight~$w_e$ we let~$k$ denote the cardinality $\# T(e)$.
The evolution of cell $i\in H(e)$ 
will be determined by a smooth \emph{coupling function~$Q_k:V^{k+1} \to V$} such that the 
evolution of cell~$i$ depends on~$x_i$ and on the~$k$ variables~$x_j$ with $j\in T(e)$.
More precisely, for a hyperedge~$e$ with tail~$T(e)$ of cardinality $\# T (e) = k$, let~$x_{T(e)}$ denote the~$k$ variables in the tail and write $Q_{k}(x_i; x_{T(e)})$.
We assume that~$Q_k$ is invariant under permutation in the last~$k$ variables, the entries of~$x_{T(e)}$.
Note that this implies that each hyperedge~$e'$ with $\#T(e')=k$ is of the same type: The interactions are governed by the same coupling function. At the same time, the strength of the interaction may be different since~$w_e$ may be different from~$w_{e'}$.

\begin{Def}[Admissibility]
A family $Q = (Q_k)$, $k\in\N$, of coupling functions as above is \emph{admissible} for the hypernetwork $(\cH, W)$ if $Q_k \neq 0$ for $k\in\Br(\Hc)$ and $Q_k = 0$ otherwise.
The collection of admissible family of coupling functions~$Q$ define the \emph{admissible cell vector fields}
\begin{align}\label{eq:CCHN}
F_i(x) &= f(x_i) +
\sum_{k\in\Br(i)}
\sum_{e \in \BS_k(i)}
w_{e} Q_{k}\!\left( x_i; x_{T(e)} \right)
\end{align}
for $i \in C$.
\hfill $\Diamond$
\end{Def}

\begin{Def}
Every admissible family of coupling functions~$Q$ for the hypernetwork~$(\cH, W)$ and corresponding cell vector fields~$F_i$ defines a \emph{coupled cell system} where the state~$x_i$ of cell $i\in C$ evolves according to
\[\dot x_i = F_i(x).\]
For convenience, we typically identify the dynamical system and the cell vector fields that define it.
\hfill $\Diamond$
\end{Def}

\begin{exam}
Consider the hypergraph~$\cH$ on the right of Figure~\ref{figure_example1}. For a collection of admissible family of coupling functions~$Q_1,Q_2$, we have that the admissible cell vector fields are given by 
\begin{align*}
\dot{x}_1 &= f(x_1) + Q_2(x_1;x_1,x_2)\\
\dot{x}_2 &= f(x_2) + Q_1(x_2;x_2) \\
\dot{x}_3 &= f(x_3) + Q_1(x_3;x_2) + Q_2(x_3;x_1,x_2)\, .
\end{align*}
\hfill $\Diamond $
\end{exam}

From this perspective, a coupled cell hypernetwork characterizes a set of admissible coupling functions and admissible vector fields. 
However, distinct hypernetworks can have the same set of admissible coupling functions 
{and even the same set of admissible} vector fields.

\begin{exam}\label{example_eq1}
Consider the hypernetwork defined by the hypergraph on the left of Figure~\ref{figure_dif_example1}. For a collection of admissible family of coupling functions~$Q_1,Q_2$,  we  have that the admissible cell vector fields are given by 
\begin{align*}
\dot{x}_1 &= f(x_1) + Q_2(x_1;x_1,x_2)\\
\dot{x}_2 &= f(x_2) + Q_1(x_2;x_3) \\
\dot{x}_3 &= f(x_3) + Q_1(x_3;x_2) + Q_2(x_3;x_1,x_2)\, .
\end{align*}
Note that the directed hypergraph on the right of Figure~\ref{figure_example1} and the one on the left of Figure~\ref{figure_dif_example1} are distinct. Nevertheless, they have the same set of admissible functions, although they do not have the same set of admissible vector fields. 
\hfill $\Diamond $
\end{exam}

\begin{exam}\label{example_eq2}
Consider the hypernetwork defined by the hypergraph on the right of Figure~\ref{figure_dif_example1}. For a collection of admissible family of coupling functions~$Q_1,Q_2$,  
we  have that the admissible cell vector fields are given by 
\begin{align*}
\dot{x}_1 &= f(x_1) + Q_1(x_1;x_1) + Q_2(x_1;x_1,x_2)\\
\dot{x}_2 &= f(x_2) + Q_1(x_2;x_2) + Q_1(x_2;x_3) \\
\dot{x}_3 &= f(x_3) + Q_1(x_3;x_3) + Q_1(x_3;x_2) + Q_2(x_3;x_1,x_2)\, .
\end{align*}
Observe that the two distinct directed hypernetworks of Figure~\ref{figure_dif_example1} have the same set of admissible coupling functions and vector fields. 
\hfill $\Diamond $
\end{exam}

\begin{figure}
\begin{center}
\begin{tabular}{cc}
 \begin{tikzpicture}
 [scale=.15,auto=left, node distance=1.5cm, 
 ]
 \node[fill=pink,style={circle,draw}] (n1) at (4,6)  {\small{1}};
 \node[fill=blue!70,style={circle,draw}] (n2) at (4,-6) {\small{2}};
  \node[fill=white,style={circle,draw}] (n3) at (20,-4)  {\small{3}};
     \node [fill=black,style={circle,scale=0.1}] (e3) at (14,0) { };
        \node [fill=black,style={circle,scale=0.1}] (e33) at (15,0) { };
\path
        (n1) [-]  edge[bend right=20,thick] node { } (e3)
         (n2) [-]  edge[bend left=20,thick] node { } (e3)
         (e3) [->] edge[thick] node {} (e33)
        (e33)  [-]  edge[bend right=90,thick] node {} (n1)  
         (n3) [->] edge[bend left=20, thick]  node {}   (n2) 
         (n2) [->] edge[bend left=20, thick]  node {}   (n3) 
       (e33)  [-]  edge[thick] node {} (n3);    
       
 \end{tikzpicture} &
 \begin{tikzpicture}
 [scale=.15,auto=left, node distance=1.5cm, 
 ]
 \node[fill=pink,style={circle,draw}] (n1) at (4,6)  {\small{1}};
 \node[fill=blue!70,style={circle,draw}] (n2) at (4,-6) {\small{2}};
  \node[fill=white,style={circle,draw}] (n3) at (20,-4)  {\small{3}};
     \node [fill=black,style={circle,scale=0.1}] (e3) at (14,0) { };
        \node [fill=black,style={circle,scale=0.1}] (e33) at (15,0) { };
\path
       (n1)  [->]  edge[loop left=90,thick] node {} (n1)
       (n2)  [->]  edge[loop left=90,thick] node {} (n2)
       (n3)  [->]  edge[loop right=90,thick] node {} (n3)
        (n1) [-]  edge[bend right=20,thick] node { } (e3)
         (n2) [-]  edge[bend left=20,thick] node { } (e3)
         (e3) [->] edge[thick] node {} (e33)
        (e33)  [-]  edge[bend right=90,thick] node {} (n1)  
         (n3) [->] edge[bend left=20, thick]  node {}   (n2) 
         (n2) [->] edge[bend left=20, thick]  node {}   (n3) 
       (e33)  [-]  edge[thick] node {} (n3);    
       
 \end{tikzpicture}
 \end{tabular}
 \end{center}
 \caption{Two distinct directed hypernetworks with the same admissible vector fields. Assume all hyperedges have weight $1$.}\label{figure_dif_example1}
\end{figure}
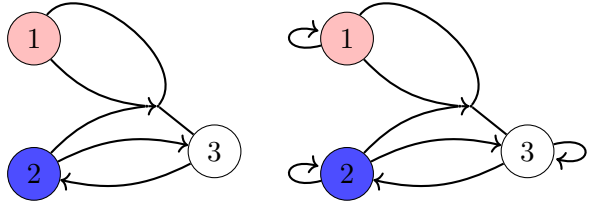

\begin{exam}
Consider the hypernetworks $(\Hc,W_1)$ (left) and $(\Hc,W_2)$ (right) of Figure~\ref{fig_weighted_dif_same_hyper}. 
Thus, the same hypergraph~$\Hc$ and different weighted adjacency matrices and, thus, different admissible vector fields. In fact, for an admissible coupling function~$Q_1$,  
we have that the admissible cell vector fields for $(\Hc,W_1)$ are given by 
\begin{align*}
\dot{x}_1 &= f(x_1), \\
\dot{x}_2 &= f(x_2) + Q_1(x_2;x_1) + Q_1(x_2;x_3), \\
\dot{x}_3 &= f(x_3) + Q_1(x_3;x_1) + Q_1(x_3;x_2);
\end{align*}
and the admissible cell vector fields for $(\Hc,W_2)$ are given by 
\begin{align*}
\dot{x}_1 &= f(x_1), \\
\dot{x}_2 &= f(x_2) + Q_1(x_2;x_1) + Q_1(x_2;x_3), \\
\dot{x}_3 &= f(x_3) + Q_1(x_3;x_1) + 3Q_1(x_3;x_2);
\end{align*}
Thus,  we see that $(\Hc,W_1)$ and  $(\Hc,W_2)$  have distinct set of admissible  vector fields. 
\hfill $\Diamond $
\end{exam}

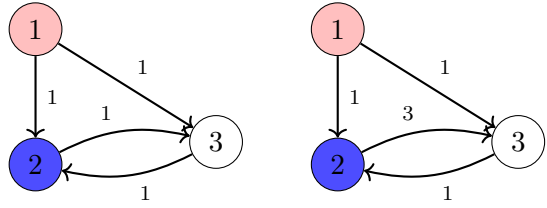
\begin{figure}
\begin{center}
\begin{tabular}{cc}
 \begin{tikzpicture}
 [scale=.15,auto=left, node distance=1.5cm, 
 ]
 \node[fill=pink,style={circle,draw}] (n1) at (4,6)  {\small{1}};
 \node[fill=blue!70,style={circle,draw}] (n2) at (4,-6) {\small{2}};
  \node[fill=white,style={circle,draw}] (n3) at (20,-4)  {\small{3}};
\path
        (n1) [->]  edge[bend right=0,thick] node {{\tiny $1$}} (n3)
         (n1) [->]  edge[bend left=0,thick] node {{\tiny $1$}} (n2)
         (n3) [->] edge[bend left=20, thick]  node {{\tiny $1$}} (n2) 
         (n2) [->] edge[bend left=20, thick]  node {{\tiny $1$}} (n3);         
 \end{tikzpicture} \quad & \quad 
 \begin{tikzpicture}
 [scale=.15,auto=left, node distance=1.5cm, 
 ]
 \node[fill=pink,style={circle,draw}] (n1) at (4,6)  {\small{1}};
 \node[fill=blue!70,style={circle,draw}] (n2) at (4,-6) {\small{2}};
  \node[fill=white,style={circle,draw}] (n3) at (20,-4)  {\small{3}};
\path
        (n1) [->]  edge[bend right=0,thick] node {{\tiny $1$}} (n3)
         (n1) [->]  edge[bend left=0,thick] node {{\tiny $1$}} (n2)
         (n3) [->] edge[bend left=20, thick]  node {{\tiny $1$}} (n2) 
         (n2) [->] edge[bend left=20, thick]  node {{\tiny $3$}} (n3);         
 \end{tikzpicture} 
 \end{tabular}
 \end{center}
 \caption{A directed hypergraph with different weighted adjacency matrices (and different admissible vector fields). }
 \label{fig_weighted_dif_same_hyper}
\end{figure}

\begin{Def}\label{def:HypernetworkEquiv}
Two hypernetworks $(\Hc_1,W_1)$ and $(\Hc_2,W_2)$ with identical cells (i.e., the nodes, their phase space, and internal dynamics) are \emph{identical as coupled cell systems} if they have the same set of admissible cell vector fields.
Two hypernetworks $(\Hc_1,W_1)$ and $(\Hc_2,W_2)$ with identical cells are \emph{equivalent as coupled cell systems} if they are identical up to a permutation of the cells.
\hfill $\Diamond$
\end{Def}

\begin{exam}
The two directed, weighted hypergraphs in Figure~\ref{figure_dif_example1} are identical {(and equivalent)}   
as coupled cell hypernetworks as outlined in Examples~\ref{example_eq1} and~\ref{example_eq2}. 
\hfill $\Diamond$
\end{exam}

\begin{exam}\label{exam_equiv_lemma}
The two hypernetworks defined by the hypergraphs in Figure~\ref{fig_equiv_lemma} are identical (equivalent) as coupled cell hypernetworks. 
For an admissible coupling function~$Q_2$, we have that for both coupled cell hypernetworks, the admissible cell vector fields are given by 
\begin{align*}
\dot{x}_1 &= f(x_1) \\
\dot{x}_2 &= f(x_2) \\
\dot{x}_3 &= f(x_3) + Q_2(x_3;x_1, x_2)\\
\dot{x}_4 &= f(x_4) + Q_2(x_3;x_1,x_2)
\end{align*}
\hfill $\Diamond$
\end{exam}

\begin{figure}
\begin{center}
\begin{tabular}{cc}
 \begin{tikzpicture}
 [scale=.15,auto=left, node distance=1.5cm, 
 ]
 \node[fill=white,style={circle,draw}] (n1) at (4,6)  {\small{1}};
 \node[fill=white,style={circle,draw}] (n2) at (4,-6) {\small{2}};
  \node[fill=white,style={circle,draw}] (n3) at (24,6)  {\small{3}};
  \node[fill=white,style={circle,draw}] (n4) at (24,-6)  {\small{4}};
     \node [fill=black,style={circle,scale=0.1}] (e3) at (14,0) { };
        \node [fill=black,style={circle,scale=0.1}] (e33) at (15,0) { };
\path
        (n1) [-]  edge[bend right=20,thick] node { } (e3)
         (n2) [-]  edge[bend left=20,thick] node { } (e3)
         (e3) [->] edge[thick] node {} (e33)
        (e33)  [-]  edge[bend right=20,thick] node {} (n3)  
         (e33)  [-]  edge[bend left=20,thick] node {} (n4);    
 \end{tikzpicture} \quad & \quad 
 \begin{tikzpicture}
 [scale=.15,auto=left, node distance=1.5cm, 
 ]
 \node[fill=white,style={circle,draw}] (n1) at (4,6)  {\small{1}};
 \node[fill=white,style={circle,draw}] (n2) at (4,-6) {\small{2}};
  \node[fill=white,style={circle,draw}] (n3) at (24,6)  {\small{3}};
  \node[fill=white,style={circle,draw}] (n4) at (24,-6)  {\small{4}};
     \node [fill=black,style={circle,scale=0.1}] (e3) at (14,3) { };
     \node [fill=black,style={circle,scale=0.1}] (e33) at (15,3) { };
     \node [fill=black,style={circle,scale=0.1}] (e4) at (14,-3) { };
     \node [fill=black,style={circle,scale=0.1}] (e44) at (15,-3) { };       
\path
        (n1) [-]  edge[bend right=20,thick] node { } (e3)
         (n2) [-]  edge[bend left=20,thick] node { } (e3)
         (e3) [->] edge[thick] node {} (e33)
        (e33)  [-]  edge[bend right=20,thick] node {} (n3)  
        
        (n1) [-]  edge[bend right=20,thick] node { } (e4)
         (n2) [-]  edge[bend left=20,thick] node { } (e4)
         (e4) [->] edge[thick] node {} (e44)
        (e44)  [-]  edge[bend right=20,thick] node {} (n4);  
 \end{tikzpicture}
 \end{tabular}
 \end{center}
 \caption{Two identical (equivalent) coupled cell hypernetworks corresponding to two distinct weighted directed hypergraphs. 
 Here, we are assuming all hyperedges with weight $1$.}\label{fig_equiv_lemma}
\end{figure}
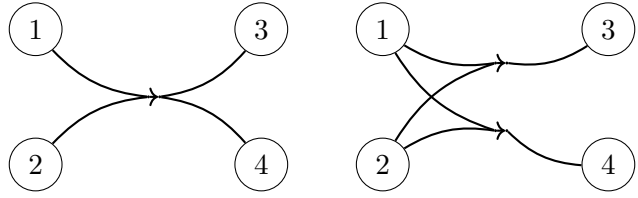

\begin{exam}
The two hypernetworks in Figure~\ref{fig_one_is_cont_other} are not equivalent as coupled cell hypernetworks. For an admissible coupling function~$Q_2$,  
we have that the admissible cell vector fields for the hypergraph on the left are given by 
\begin{align*}
\dot{x}_1 &= f(x_1) \\
\dot{x}_2 &= f(x_2)\\
\dot{x}_3 &= f(x_3) + Q_2(x_3;x_1,x_2)
\end{align*}
where~$Q_2$ is invariant under permutation of the last two coordinates. 
For an admissible coupling function~$Q_1$,  
we have that the admissible cell vector fields for the hypergraph on the right are given by 
\begin{align*}
\dot{x}_1 &= f(x_1) \\
\dot{x}_2 &= f(x_2) \\
\dot{x}_3 &= f(x_3) + Q_1(x_3;x_1) + Q_1(x_3;x_2)\, .
\end{align*}
Note that the function $Q_1(x_3;x_1) + Q_1(x_3;x_2)$ is a particular case of $Q_2(x_3;x_1,x_2)$. 
That is, fixing the same cell phase spaces for the two hypergraphs, we have that the set of admissible cell vector fields for the hypergraph on the right is strictly contained  in the set of admissible cell vector fields for the hypergraph on the left.
\hfill $\Diamond$
\end{exam}

\begin{figure}
\begin{center}
\begin{tabular}{cc}
 \begin{tikzpicture}
 [scale=.15,auto=left, node distance=1.5cm, 
 ]
 \node[fill=white,style={circle,draw}] (n1) at (4,6)  {\small{1}};
 \node[fill=white,style={circle,draw}] (n2) at (4,-6) {\small{2}};
  \node[fill=white,style={circle,draw}] (n3) at (20,0)  {\small{3}};
     \node [fill=black,style={circle,scale=0.1}] (e3) at (14,0) { };
        \node [fill=black,style={circle,scale=0.1}] (e33) at (15,0) { };
\path
        (n1) [-]  edge[bend right=20,thick] node { } (e3)
         (n2) [-]  edge[bend left=20,thick] node { } (e3)
         (e3) [->] edge[thick] node {} (e33)
        (e33)  [-]  edge[bend right=0,thick] node {} (n3);    
 \end{tikzpicture} \quad & \quad 
 \begin{tikzpicture}
 [scale=.15,auto=left, node distance=1.5cm, 
 ]
 \node[fill=white,style={circle,draw}] (n1) at (4,6)  {\small{1}};
 \node[fill=white,style={circle,draw}] (n2) at (4,-6) {\small{2}};
  \node[fill=white,style={circle,draw}] (n3) at (20,0)  {\small{3}};
\path
        (n1) [->]  edge[bend right=0,thick] node { } (n3)
         (n2) [->]  edge[bend left=0,thick] node { } (n3);
 \end{tikzpicture} 
 \end{tabular}
 \end{center}
 \caption{Two distinct directed hypernetworks. Assume all hyperedges have weight $1$. For any choice of cell phase spaces, the set of admissible vector fields for the hypernetwork on the right is strictly contained at the set of admissible vector fields for the hypernetwork on the left.}\label{fig_one_is_cont_other}
\end{figure}
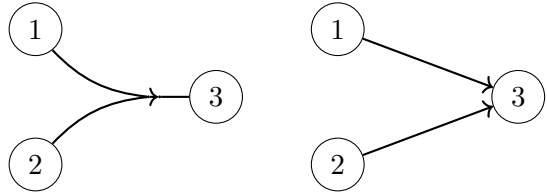

\begin{lemma}\label{lem:AdmEquiv}
A weighted directed hypergraph~$(\Hc,W)$ is equivalent as a coupled cell hypernetwork to a  weighted directed hypergraph~$(\Hc',W')$ such that the head~$H(e)$ of any hyperedge~$e\in E(\Hc')$ has cardinality 1.
\end{lemma}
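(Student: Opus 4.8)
The plan is to construct $(\Hc',W')$ explicitly by \emph{splitting} each hyperedge with a multi-element head into several hyperedges, each carrying a single head node, and then to verify that the set of admissible cell vector fields is unchanged. The key structural observation is that in the admissible vector field~\eqref{eq:CCHN} the contribution of a hyperedge~$e$ to a receiving cell $i\in H(e)$ is $w_e\,Q_{\#T(e)}(x_i; x_{T(e)})$, which depends on~$e$ only through its weight~$w_e$ and its tail~$T(e)$, while the receiving cell enters only through its own state~$x_i$. In particular, the head $H(e)$ serves only to record \emph{which} cells receive this term; the term itself is determined by the pair $(T(e),w_e)$ and is applied separately at each $i\in H(e)$. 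This redundancy is exactly what allows the head to be reduced to a singleton.

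Concretely, I would keep the node set~$C$ fixed and, for every hyperedge $e=(T(e),H(e))\in E(\Hc)$ and every node $i\in H(e)$, introduce a candidate hyperedge $(T(e),\{i\})$ whose head is a singleton. Since $E(\Hc')$ must be a genuine set, I would take $E(\Hc')$ to be the set of \emph{distinct} pairs $(T,\{i\})$ obtained in this way, and define the weight of $(T,\{i\})$ by
\[
w'_{(T,\{i\})} = \sum_{\substack{e \in E(\Hc) \\ T(e)=T,\ i \in H(e)}} w_e,
\]
so that collisions arising from distinct original hyperedges sharing the same tail and the same target node are merged by summing weights; the resulting weight matrix $W'$ is then the hypernetwork $(\Hc',W')$ in the statement. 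By construction every hyperedge of $\Hc'$ has head of cardinality~$1$, and since splitting preserves tails we have $\Br(\Hc')=\Br(\Hc)$, so the admissibility condition $Q_k\neq 0 \Leftrightarrow k\in\Br$ is identical for the two hypernetworks.

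The verification is then a direct comparison of the two cell vector fields. Fix a cell~$i$ and a cardinality~$k$. Grouping the sum over $\BS_k(i)$ in~$\Hc$ by common tail~$T$ gives
\[
\sum_{e \in \BS_k(i)} w_e\, Q_k(x_i; x_{T(e)})
= \sum_{T\,:\,\#T = k} \Bigl(\sum_{e:\,T(e)=T,\ i\in H(e)} w_e\Bigr) Q_k(x_i; x_T),
\]
which by the definition of $w'$ is precisely $\sum_{e' \in \BS'_k(i)} w'_{e'}\, Q_k(x_i; x_{T(e')})$, the corresponding sum for~$\Hc'$. Summing over the cardinalities in $\Br(i)$ (the same set whether computed in $\Hc$ or $\Hc'$) and adding the common internal term $f(x_i)$ shows that the admissible cell vector field at each cell~$i$ coincides for $(\Hc,W)$ and $(\Hc',W')$, for every admissible family~$Q$. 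Hence the two hypernetworks are \emph{identical}---and in particular equivalent, via the identity permutation in Definition~\ref{def:HypernetworkEquiv}---as coupled cell systems.

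The argument is essentially bookkeeping, so there is no serious obstacle; the only point requiring care is the possibility that two distinct original hyperedges with the same tail send edges to the same target cell, producing two copies of the pair $(T,\{i\})$. This is resolved by the weight-summing convention above, which is legitimate precisely because parallel contributions with identical tail enter~\eqref{eq:CCHN} linearly through their weights. I would also note in passing that splitting preserves weak connectivity---the family of split copies of $e$ still jointly joins all nodes of $T(e)\cup H(e)$, since they share the nonempty tail $T(e)$---so $\Hc'$ remains within the standing class of connected hypergraphs.
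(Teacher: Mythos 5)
Your proposal is correct and follows essentially the same route as the paper: split each hyperedge with multi-element head into singleton-head hyperedges with the same tail and weight, and observe that the contribution of a hyperedge to the vector field depends only on its tail and weight. The only addition is your explicit handling of collisions by summing weights, a bookkeeping refinement the paper's terser proof leaves implicit (and which is harmless either way, since parallel hyperedges with identical tail and head enter the sum in~\eqref{eq:CCHN} additively).
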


\begin{proof}
Replace any {hyperedge} $e\in E(\Hc)$ with head set $H(e) = \left\{v_1, \dotsc, v_k\right\}$ where $k>1$, and weight~$w_e$ by~$k$ {hyperedges} $e_j= \left(T(e) , \{v_j\} \right)$, for $j=1, \dotsc, k$, each with weight $w_{e_j}=w_e$. 
The set of admissible coupling functions and vector fields remain unchanged since they only depend on the tail of any {hyperedge}.
\end{proof}

\subsection{{Hyperedge}-Maximality, {hyperedge}-minimality, and symmetries}

In the previous section, we characterized a hypernetwork based on {its} set of admissible coupling functions/vector fields. In this section, we will now change perspective and focus on a specific choice of coupling function. Indeed, for a specific choice of coupling functions, we obtain a specific vector field.

\begin{Def}
A hypernetwork~$(\Hc,W)$ and an admissible family of coupling functions $Q = (Q_1, Q_2, \dotsc)$ defines a \emph{hypernetwork coupling} $(\Hc, W, Q)$ with associated cell vector field~$F$ as in~\eqref{eq:CCHN}.
\hfill $\Diamond$
\end{Def}

Conversely, we can assign a hypernetwork coupling to a dynamical system. 

\begin{Def}
A network dynamical system determined by $x_i\in V$, $i\in C$, evolving according to
\begin{equation}
\dot x_i = X_i(x)
\end{equation}
is a \emph{coupled cell system for a hypernetwork coupling $(\Hc, W, Q)$} if $X_i=F_i$ for an admissible cell vector field~$F_i$ with respect to $(\Hc, W, Q)$ as defined in~\eqref{eq:CCHN}.
\hfill $\Diamond$
\end{Def}

Note that the assignment of a hypernetwork coupling to a dynamical system is not unique since the hypergraph and coupling function go hand in hand. Lemma~\ref{lem:AdmEquiv} already indicated that even on the level of admissible vector fields, there are different hypergraphs that give rise to the same set of admissible coupling functions/vector fields. See Example~\ref{exam_equiv_lemma} and Figure~\ref{fig_equiv_lemma}.

\begin{Def}
Two hypernetwork couplings $(\Hc, W, Q)$, $(\Hc', W', Q')$ are \emph{identical} if 
the induced coupled cell system is the same, that is, the corresponding cell vector fields~$F,F'$ satisfy $F = F'$. 
Two hypernetwork couplings $(\Hc, W, Q)$, $(\Hc', W', Q')$ are \emph{equivalent} if they are identical up to a permutation of the cells.
\hfill $\Diamond$
\end{Def}

\begin{exam}\label{exam:Split}
Consider the hypernetwork couplings $(\Hc, W, Q)$ with
\begin{align*}
E(\Hc) &= \sset{(\sset{1,\dotsc,N}, \sset{1,\dotsc,N})},\\
W &= (1),\\
Q_N(x_i; x_1, \dotsc, x_N) & = \prod_{j=1}^N x_j + \sum_{j=1}^N x_j, 
\intertext{and $(\Hc', W', Q')$ with} 
E(\Hc') &=E(\Hc) \cup\sset{(\sset{1}, \sset{1,\dotsc,N}), \dotsc, (\sset{N}, \sset{1,\dotsc,N})},\\
W' &= (1, 1, \dotsc, 1),\\
Q'_N(x_i; x_1, \dotsc, x_N) &= \prod_{j=1}^N x_j  \text{ and } {Q'_1 (x_i;x_1) = x_1}.
\end{align*}
These hypernetwork couplings are identical.
\hfill $\Diamond$
\end{exam}

This implies that we can get equivalent hypernetwork couplings by splitting, or conversely combining {hyperedges}.

\begin{Def}
Suppose that~$(\Hc, W, Q)$ is a hypernetwork coupling and let $e\in{E(\Hc)}$ be an {hyperedge}. The hypernetwork coupling $(\Hc', W', Q')$ arises by \emph{splitting the {hyperedge}~$e$ into {hyperedges} $e'_1, \dotsc, e'_k$} if $(\Hc, W, Q)$ and $(\Hc', W', Q')$ are identical and $E(\Hc') = (E(\Hc)\smallsetminus \sset{e})\cup\sset{e'_1, \dotsc, e'_k}$. Conversely, $(\Hc, W, Q)$ arises from $(\Hc', W', Q')$ by \emph{combining the {hyperedges} $e'_1, \dotsc, e'_k$}.
\hfill $\Diamond$
\end{Def}

The hypernetwork couplings in Example~\ref{exam:Split} can be obtained by splitting/combining {hyperedges}.

Note that we do not require~$e$ to be distinct from $e'_1, \dotsc, e'_k$, we do not require $\sset{e'_1, \dotsc, e'_k}$ to be disjoint from~$E(\Hc)$, nor do we necessarily have $Q\neq Q'$. If $Q=Q'$ then the splitting/combining an {hyperedge} is \emph{purely structural}.

\begin{Def} \label{Def:operations} 
Given an hypernetwork $(\Hc, W)$ we define the following \emph{purely structural hyperedge operations}:
\begin{enumerate}
\item Any hyperedge ${e}\in E(\Hc)$ with weight~$w_e$ and 
head 
 ${H(e)} = \sset{v_1, \dotsc, v_r}$ can be split into~${r}$ {hyperedges} $e_l = (t, \sset{v_l})$, $l=1, \dotsc, r$ each with weight~$w_e$;
\item {More generally, 
any hyperedge $e \in E(\Hc)$ with weight~$w_e$ and 
head
$H(e) = H_1 \cup \dotsc \cup H_r$, with $H_i \ne\emptyset$ and $H_i \cap H_j=\emptyset$, for $i \ne j$, can be split into~$r$ {hyperedges} $e_l = (T, H_l)$, $l=1, \dotsc, r$ each with weight~$w_e$;
}
\item Conversely, two hyperedges $e_1, e_2$ with $T(e_1)=T(e_2)$ can be combined into a single {hyperedge} if their heads are disjoint, $H(e_1)\cap H(e_2)=\emptyset$, and they have the same weight.
\end{enumerate}
\hfill $\Diamond$
\end{Def}

The following property is immediate:

\begin{lemma} \label{lem:ident_split}
Let $(\Hc, W)$ and $(\Hc', W')$ be two hypernetworks such that $(\Hc', W')$ is obtained from $(\Hc, W)$ by one (or more) purely structural splitting/combining {hyperedge} operations.
Then the hypernetworks are identical as coupled cell systems.
Moreover, for every family of admissible coupling functions~$Q$, the hypergraph couplings $(\Hc,W,Q)$ and $(\Hc', W',Q)$ are identical.
\end{lemma}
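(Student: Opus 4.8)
The plan is to exploit the structural form of the admissible cell vector field~\eqref{eq:CCHN}. Rewriting the double sum as a single sum over the backward star, using the partition $\bigcup_{k\in\Br(i)}\BS_k(i) = \BS(i)$, the vector field at cell~$i$ reads
\[
F_i(x) = f(x_i) + \sum_{e\in\BS(i)} w_e\, Q_{\#T(e)}\!\left(x_i; x_{T(e)}\right).
\]
The key observation is that the contribution of a hyperedge~$e$ to cell~$i\in H(e)$ is $w_e\, Q_{\#T(e)}(x_i; x_{T(e)})$, which depends only on the tail~$T(e)$ and the weight~$w_e$, and is entirely insensitive to the remaining structure of the head~$H(e)$.

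It suffices to treat a single operation of type~(2) in Definition~\ref{Def:operations}, since type~(1) is the special case of singleton blocks $H_l=\sset{v_l}$, and type~(3) is exactly the inverse of a type~(2) split; as identity of coupled cell systems is transitive, a chain of operations follows by induction. So suppose $(\Hc',W')$ arises from $(\Hc,W)$ by splitting a hyperedge $e=(T,H)$ of weight~$w_e$, where $H=H_1\cup\dotsb\cup H_r$ with the~$H_l$ nonempty and pairwise disjoint, into hyperedges $e_l=(T,H_l)$ of weight $w_{e_l}=w_e$. First I would note that the tails of all hyperedges are preserved by the split, whence $\Br(\Hc)=\Br(\Hc')$; consequently the two hypernetworks share the same admissibility condition and hence the same set of admissible families~$Q$.

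Next I would fix an admissible family~$Q$ and a cell~$i$ and compare the two vector fields term by term, writing $k=\#T(e)$. If $i\notin H$, then $i\notin H_l$ for every~$l$, so neither~$e$ nor any~$e_l$ lies in~$\BS(i)$ and the backward star of~$i$ is unaffected, giving $F_i=F_i'$. If $i\in H$, then by disjointness of the partition $i\in H_l$ for exactly one index~$l$; thus in~$\Hc$ the single hyperedge~$e$ contributes $w_e\, Q_{k}(x_i; x_{T(e)})$ to~$F_i$, while in~$\Hc'$ exactly the single hyperedge~$e_l$ lies in~$\BS(i)$ and contributes $w_{e_l}\, Q_{k}(x_i; x_{T(e_l)})=w_e\, Q_{k}(x_i; x_{T(e)})$, using $T(e_l)=T(e)$ and $w_{e_l}=w_e$, while the remaining $e_{l'}$ with $l'\ne l$ do not belong to~$\BS(i)$. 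In either case the contributions agree and all other summands of~\eqref{eq:CCHN} are untouched, so $F_i=F_i'$. Since this holds for every cell~$i$, the induced coupled cell systems coincide, $F=F'$, which is precisely the assertion that $(\Hc,W,Q)$ and $(\Hc',W',Q)$ are identical; as~$Q$ was arbitrary among the common admissible families, the two hypernetworks are identical as coupled cell systems.

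The argument involves no real obstacle---the lemma is essentially a bookkeeping statement, as the paper signals by calling it immediate---and the only points requiring care are that the split preserves all tail cardinalities (so that $\Br$, and hence the set of admissible families, is unchanged) and that the disjointness of the head partition guarantees each receiving cell~$i$ sees exactly one of the new hyperedges in its backward star.
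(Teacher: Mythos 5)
Your proof is correct and follows exactly the reasoning the paper treats as immediate: the contribution of a hyperedge to a receiving cell depends only on its tail and weight (the same observation used in the paper's proof of Lemma~\ref{lem:AdmEquiv}), so replacing a hyperedge by copies with the same tail and weight over a disjoint partition of its head leaves every $F_i$ unchanged. The reduction to a single type-(2) operation and the check that $\Br$ (hence admissibility) is preserved are both sound.
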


Splitting an hyperedge does not necessarily increase  
the number of hyperedges. Indeed, if $\sset{e'_1, \dotsc, e'_k}\subset E(\Hc)$ then the {hyperedge}~$e$ is \emph{redundant} and splitting the {hyperedge} decreases the overall number of {hyperedges}.

\begin{exam}\label{exam:Redundant}
Let $e=\left( \{1,\dotsc,N\}, \{1,\dotsc,N\}\right)$. Consider $(\Hc, W, Q)$ with 
\[
E(\Hc)
 = \left\{ e, \left( \sset{1}, \sset{1,\dotsc,N}\right), \dotsc, \left( \sset{N}, \sset{1,\dotsc,N}\right) \right\} 
 \] and 
$Q_N \left( x_i; x_1, \dotsc, x_N\right) = \sum_{j=1}^Nx_j$ and $Q_1 (x_i; x_1)= x_1$. Then the hyperedge~$e$ is redundant. Note that redundancy here depends on the specific form of the coupling functions.
\hfill $\Diamond$
\end{exam}

To any arbitrary hypernetwork coupling we can associate a maximal and minimal dynamically equivalent hypernetwork coupling.

\begin{Def}
A hypernetwork coupling~$(\Hc, W, Q)$ is \emph{hyperedge-maximal} if  
no {hyperedge} can be split {to obtain an equivalent hypergraph coupling}. 
Conversely, a hypernetwork coupling is \emph{hyperedge-minimal} if  
no {hyperedges} can be joined to obtain an equivalent hypergraph coupling structure. 
\hfill $\Diamond$
\end{Def}

Note that without further assumptions, neither hyperedge-minimal nor -maximal associated hypernetwork couplings need to be unique: For example, if a hypernetwork coupling has an associated minimal hypernetwork coupling that has a single hyperedge~$e$ with weight~$w_e$ then we get an infinite family of minimal hypernetwork couplings for $w'_e = aw_e$ and $Q_e'=a^{-1}Q_e$, $a\in\R \setminus \{0\}$.

\begin{Def}\label{def:Proper}
A hypernetwork coupling~$(\cH, W, Q)$ is \emph{proper} if all its associated {hyperedge-maximal} hypernetwork couplings contain at least one hyperedge that is not an edge of a graph, i.e., an edge that is not of the form $e=(\sset{t}, \sset{h})$ with $t,h \in C(\cH)$.
\hfill $\Diamond$
\end{Def}

\begin{exam}
The coupled cell system defined in Example~\ref{exam:Redundant} is not proper: An associated hyperedge-maximal hypernetwork coupling has edges 
\[
E(\Hc)=
\left\{ \left( \{1\},\{1\}\right), \left(\{1\},\{2\}\right), \dotsc, \left(\{N\},\{N-1\}\right), \left(\{N\},\{N\}\right)\right\}
\]
and 
\noindent
$Q_1\left( x_i; x_1\right) = x_1$.
 However, if~$Q_N$ is substituted with 
 {$Q_N'$ defined by 
 $Q_N' \left( x_i; x_1, \dotsc, x_N\right) = x_1\dotsb x_N$} 
 then any associated maximal coupling structure must have $\left(\sset{1, \dotsc, N}, \{i\}\right) \in E(\Hc)$, $i=1,\dotsc,N$ and thus yields a proper coupled cell hypernetwork.
\hfill $\Diamond$
\end{exam}

Note that we can always split {hyperedges} whose heads have cardinality greater than one. The following is an immediate consequence of Lemma~\ref{lem:AdmEquiv}:

\begin{lemma}
Consider a coupled cell system with associated maximal hypernetwork coupling~$(\Hc, W, Q)$. If $(t,h)\in E(\Hc)$ then $\#{h}=1$.
\end{lemma}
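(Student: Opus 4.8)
The plan is to argue by contraposition, using the purely structural splitting packaged in the proof of Lemma~\ref{lem:AdmEquiv}. The statement asserts that in a hyperedge-maximal coupling no hyperedge can have a head of cardinality greater than one; equivalently, the presence of a hyperedge whose head has cardinality at least two already obstructs maximality. So I would suppose, for contradiction, that $(\Hc, W, Q)$ is hyperedge-maximal and that there exists a hyperedge $e=(t,h)\in E(\Hc)$ with $\#h = k > 1$, say $h = \sset{v_1, \dotsc, v_k}$ with the $v_l$ distinct.

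Next I would apply the first purely structural operation of Definition~\ref{Def:operations}: split $e$ into the $k$ hyperedges $e_l = (t, \sset{v_l})$, $l = 1, \dotsc, k$, each carrying the weight $w_e$, to obtain a hypernetwork $(\Hc', W')$ with $E(\Hc') = (E(\Hc)\smallsetminus\sset{e})\cup\sset{e_1, \dotsc, e_k}$. The point established in the proof of Lemma~\ref{lem:AdmEquiv} is that the admissible cell vector fields depend only on the tails of the hyperedges together with the weights, both of which are preserved by this replacement; hence for the fixed family~$Q$ the cell vector field of $(\Hc', W', Q)$ coincides with that of $(\Hc, W, Q)$. By Lemma~\ref{lem:ident_split} the two couplings are therefore identical, and in particular equivalent.

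The key observation is then that this is a \emph{genuine} split in the sense of the hyperedge-maximality definition: since $k > 1$ and the $v_l$ are distinct, the hyperedge $e$ (with $\#h>1$) has been replaced by $k \geq 2$ distinct hyperedges, each with singleton head, so $(\Hc', W', Q)$ is obtained from $(\Hc, W, Q)$ by splitting~$e$ to an equivalent coupling. This contradicts hyperedge-maximality, and the contradiction forces $\#h = 1$ for every $(t,h)\in E(\Hc)$.

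I do not expect a serious obstacle: the argument is a direct application of the splitting operation already available through Lemma~\ref{lem:AdmEquiv} and Lemma~\ref{lem:ident_split}, which is precisely why the statement is flagged as an immediate consequence. The one point requiring a little care is the bookkeeping about when a split \emph{counts} against maximality, i.e.\ ensuring that replacing a head of cardinality greater than one by singleton heads really produces a structurally different, equivalent coupling rather than a vacuous operation. This is guaranteed exactly because $\#h>1$ makes the resulting hyperedges distinct and the operation non-trivial, which is what the preceding remark (``we can always split hyperedges whose heads have cardinality greater than one'') records.
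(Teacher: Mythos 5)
Your argument is correct and is exactly the route the paper intends: the lemma is stated there without proof as an ``immediate consequence of Lemma~\ref{lem:AdmEquiv}'', i.e.\ any hyperedge with head of cardinality greater than one can be split by the purely structural operation into singleton-head hyperedges yielding an identical coupling, which contradicts hyperedge-maximality. Your write-up simply makes explicit the same splitting step (and usefully records why the split is non-vacuous when $\#h>1$), so there is nothing to add.
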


We now explore some straightforward consequences of equivalent hypernetwork couplings and how they relate to the symmetry of the coupled cell hypernetworks they define. Let~$\mathbf{S}_N$ denote the symmetric group of~$N$ elements that acts by permuting the node indices.

\begin{prop}\label{prop:FullSym}
Write $e=(\sset{1,\dotsc,N}, \sset{1,\dotsc,N})$ and consider a coupled cell system. 
 If an associated hyperedge-minimal hypernetwork coupling~$(\Hc, W, Q)$ has exactly one edge~$e$, i.e., $E(\Hc) = \sset{e}$, 
 then the coupled cell hypernetwork is $\mathbf{S}_N$-equivariant.
\end{prop}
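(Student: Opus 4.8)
The plan is to reduce the statement to the explicit form of the admissible cell vector field and then verify the defining equivariance identity by a one-line computation, the only genuine input being the permutation symmetry of $Q_N$ in its last $N$ arguments.

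First I would unpack the hypothesis $E(\Hc) = \sset{e}$ with $e=(\sset{1,\dotsc,N},\sset{1,\dotsc,N})$. Since $H(e)=\sset{1,\dotsc,N}$, every cell~$i$ lies in the head of~$e$, so $\BS(i)=\sset{e}$ and $\Br(i)=\sset{N}$ for each~$i$. Writing~$w$ for the single weight and noting $T(e)=\sset{1,\dotsc,N}$, so that $x_{T(e)}=(x_1,\dotsc,x_N)$, the defining formula~\eqref{eq:CCHN} collapses to
\[
F_i(x) = f(x_i) + w\, Q_N(x_i; x_1, \dotsc, x_N), \qquad i = 1, \dotsc, N.
\]
Because a hyperedge-minimal coupling determines the coupled cell system, and hence its vector field, uniquely, it suffices to check that this $F=(F_1,\dotsc,F_N)$ is $\mathbf{S}_N$-equivariant.

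Next, let $\sigma\in\mathbf{S}_N$ act on $V^N$ by $(\sigma\cdot x)_i = x_{\sigma^{-1}(i)}$, so that equivariance reads $F_i(\sigma\cdot x)=F_{\sigma^{-1}(i)}(x)$ for all~$i$. Substituting the explicit form above, the internal term obeys $f((\sigma\cdot x)_i)=f(x_{\sigma^{-1}(i)})$, while the coupling term becomes $w\, Q_N\!\left(x_{\sigma^{-1}(i)}; x_{\sigma^{-1}(1)},\dotsc,x_{\sigma^{-1}(N)}\right)$. The tuple $(x_{\sigma^{-1}(1)},\dotsc,x_{\sigma^{-1}(N)})$ is merely a reordering of $(x_1,\dotsc,x_N)$, so the assumed invariance of~$Q_N$ under permutations of its last~$N$ entries collapses this to $w\, Q_N(x_{\sigma^{-1}(i)}; x_1,\dotsc,x_N)$. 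Adding the two pieces gives exactly $F_{\sigma^{-1}(i)}(x)$, which is the required identity.

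The only point needing care is the bookkeeping of the group action: one must fix the active-versus-passive convention for~$\sigma$ consistently, so that the symmetry of~$Q_N$ is applied to the correct tuple and the indices land on $\sigma^{-1}(i)$ rather than $\sigma(i)$. There is no analytic obstacle, since the single-hyperedge structure forces all~$F_i$ to share the common form and the symmetry of~$Q_N$ is available by admissibility. If desired, I would close by remarking that the conclusion is independent of which admissible~$Q_N$ is chosen and is unaffected by the normalization ambiguity in $(W,Q)$ noted earlier, so the equivariance is genuinely a structural feature of the single full-tail, full-head hyperedge.
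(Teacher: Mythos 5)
Your proof is correct and follows the same route as the paper: reduce the hypothesis to the explicit globally-and-identically-coupled form $F_i(x)=f(x_i)+w\,Q_N(x_i;x_1,\dotsc,x_N)$ and then observe $\mathbf{S}_N$-equivariance. The paper simply asserts the last step, whereas you spell out the index bookkeeping and the use of the permutation symmetry of $Q_N$; both are fine.
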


\begin{proof}
The existence of a minimal hypernetwork coupling~$(\Hc, W, Q)$ with $E(\Hc) = \sset{e}$ implies that
\begin{equation}
\dot x_i = f(x_i) + w_{e}Q_{N}(x_i; x_1, \dotsc, x_N) \quad \left(i \in C\right), 
\end{equation}
all cells are globally and identically coupled. These equations are $\mathbf{S}_N$-equivariant.
\end{proof}

More generally we can make the following statement.

\begin{prop}
Consider a coupled cell system. Suppose that there is an associated hypernetwork coupling~$(\Hc, W, Q)$ and a set $A\subset C(\Hc)$ of cells such that for any edge $(t,h)\in E(\Hc)$ we have
(a)~if $a\in h\cap A$ then $A\subset h$ or
(b)~if $a\in t\cap A$ then $A\subset t$.
Then the coupled cell system is $\mathbf{S}_k$-equivariant where $k=\#{A}$ and $\mathbf{S}_k$ acts by permuting the vertices in~$A$.
\end{prop}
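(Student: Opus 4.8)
The plan is to verify the equivariance identity $F(\sigma x)=\sigma F(x)$ directly on the cell vector field~\eqref{eq:CCHN}, where I identify $\mathbf{S}_k$ with the group $\mathbf{S}_A$ of permutations of $A\subset C$ extended by the identity on $C\smallsetminus A$, acting on the total phase space $\prod_{i\in C}V$ by $(\sigma x)_i = x_{\sigma^{-1}(i)}$. Since all cells are identical, the internal term satisfies $f((\sigma x)_i)=f(x_{\sigma^{-1}(i)})$, which is the $i$th component of $\sigma$ applied to the internal vector field, so it is automatically equivariant; hence it suffices to treat the coupling sum. I would decompose that sum over hyperedges and prove the stronger, modular statement that the contribution of each single hyperedge is already equivariant.

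Concretely, for $e=(t,h)$ let $G_e$ be defined by $[G_e(x)]_i = w_e\,Q_{\#t}(x_i;x_t)$ when $i\in h$ and $[G_e(x)]_i=0$ otherwise, so that $F$ is the internal term plus $\sum_{e\in E(\Hc)}G_e$. Using the invariance of $Q_{\#t}$ under permutations of its last $\#t$ arguments, $[G_e(\sigma x)]_i = w_e\,Q_{\#t}(x_{\sigma^{-1}(i)};x_{\sigma^{-1}(t)})\,\mathbf{1}[i\in h]$, while $[\sigma G_e(x)]_i = [G_e(x)]_{\sigma^{-1}(i)} = w_e\,Q_{\#t}(x_{\sigma^{-1}(i)};x_t)\,\mathbf{1}[\sigma^{-1}(i)\in h]$, where $\mathbf{1}[\,\cdot\,]$ is the indicator. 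Comparing these for all $i$, equality reduces to two combinatorial requirements: the head indicators must agree, i.e.\ $\sigma(h)=h$ as a set, and the tail arguments must agree, i.e.\ $\sigma^{-1}(t)=t$ as a multiset (so that $x_{\sigma^{-1}(t)}=x_t$ as a multiset of values and the symmetric function $Q_{\#t}$ returns the same value).

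These two requirements are exactly what conditions~(a) and~(b) deliver. Condition~(a) forces $h\cap A\in\{\emptyset,A\}$: if $h\cap A=\emptyset$ then $\sigma$ fixes $h$ pointwise, and if $A\subset h$ then $\sigma$ permutes $A$ within $h$ while fixing $h\smallsetminus A$; in either case $\sigma(h)=h$, so the head indicators match and in particular $\BS(\sigma(i))=\BS(i)$ for every $i$. Condition~(b) forces $t\cap A\in\{\emptyset,A\}$, so that, since $\sigma$ acts trivially off $A$, the tail multiset is left invariant by $\sigma$. Thus each $G_e$ is $\mathbf{S}_A$-equivariant; summing over $E(\Hc)$ and adding the equivariant internal term shows that~\eqref{eq:CCHN} is $\mathbf{S}_k$-equivariant.

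The main obstacle I anticipate is the multiset nature of the tails in the second requirement. Set inclusion $A\subset t$ alone does not give $\sigma^{-1}(t)=t$: for instance $t=\{1,1,2\}$ with $A=\{1,2\}$ is not fixed by the transposition $(1\,2)$, because the nodes of $A$ occur with unequal multiplicity. I would therefore read condition~(b) at the multiset level---every node of $A$ occurring in a tail does so with a common multiplicity, which holds automatically when that tail restricts to an ordinary set on $A$---and record this as the precise hypothesis the argument needs. It is also worth flagging that both~(a) and~(b) are genuinely used, one to control heads and one to control tails; neither alone suffices, as a single hyperedge whose head avoids $A$ but whose tail meets $A$ in a proper nonempty subset already breaks equivariance.
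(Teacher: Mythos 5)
Your proof is correct and follows essentially the same route as the paper's (which is only a two-sentence sketch saying that (a) makes every node of $A$ receive the same input and (b) makes every input depend symmetrically on the nodes of $A$); you simply carry out the verification $F(\sigma x)=\sigma F(x)$ hyperedge by hyperedge. The one substantive addition is your multiset caveat, and it is a genuine one that the paper's proof glosses over: since tails are multisets, the hypothesis $A\subset t$ read as set inclusion does not force $\sigma^{-1}(t)=t$ (e.g.\ $t=\{1,1,2\}$, $A=\{1,2\}$, head disjoint from $A$ satisfies (a) and (b) yet $Q_3(x_i;x_1,x_1,x_2)$ is not symmetric under $x_1\leftrightarrow x_2$), so the proposition as stated needs the extra requirement that the nodes of $A$ occur with equal multiplicity in every tail meeting $A$ --- exactly the reading you propose.
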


\begin{proof}
By definition of a coupled cell system,
Property~(a) ensures that any node in~$A$ receives the same input. At the same time, Property~(b) ensures that the input of any node depends in the same way on all nodes contained in~$A$ consequently, permuting nodes with indices in~$A$ does not affect the dynamical equations which proves $\mathbf{S}_k$-equivariance.
\end{proof}

Of course Proposition~\ref{prop:FullSym} is a special case of the previous statement with $A=C(\Hc)$.

\section{Synchrony in Coupled Cell Hypernetworks} \label{sec:sync}

Synchrony and synchrony patterns---where different nodes in the network evolve identically---is an essential collective phenomenon in network dynamical systems. Given a hypernetwork, what are the possible synchrony patterns for any admissible vector field? 
In the following we describe the synchrony patterns of a coupled cell hypernetwork and their associated balanced relations and quotient hypernetworks.

\subsection{Input sets} As a first step, we generalize the concept of input equivalence relation for networks to the hypernetworks. 
For standard $n$-node directed graphs, Definition~3.2 of~\cite{SGP03} introduces the concept of input equivalence of nodes. Roughly, two nodes~$c$ and~$c'$ are said to be \emph{input equivalent} when besides the number of directed edges to~$c$ and~$c'$ is the same there is also 
a bijection between those sets of directed edges which preserves the edge types.

\begin{Def}\label{def:InputEquiv}
Consider a weighted directed hypernetwork with set of nodes~$C$, set of hyperedges~$E$ 
and weight matrix $W$.
Recall from Section~\ref{sec:Hypernetworks} that~$\Br(c)$ denotes the cardinalities of the hyperedges adjacent to $c\in C$.
Define the \emph{input equivalence relation~$\simI$} on~$C$ in the following way: \\
(i) Cells with empty backward star are input equivalent, as we are assuming all cells are identical. \\
(ii) Two cells $c, c' \in C$ with nonempty backward star are input equivalent if and only if
\begin{enumerate}[label=(\alph*)] 
\item 
$\Br(c) = \Br(c')$;
\item 
For all $k \in \Br(c)$ we have $\sum_{e\in\BS_k(c)} w_{e} = \sum_{e\in\BS_k(c')} w_{e},$
where~$w_e$ denotes the weight of the hyperedge~$e$.
\end{enumerate}
 \hfill $\Diamond $
\end{Def}

In the above definition for two cells to be input equivalent, condition~(iia) imposes that the sets of all cardinalities of the tail sets of the hyperedges of both cells must coincide. Moreover, condition~(iib) says that, for a fixed cardinality of the tail set of a hyperedge of a cell, the summation of the weights of all the edges with the same tail set cardinality must coincide for both cells. 

\begin{exam} (i) Consider the directed hypernetwork in Figure~\ref{h_net_1_v2}. We have that $\simIo = \{ \{1\}, \{2\}, \{3\}, \{4\}\}$. Note that $\BS(1) = \{e_3\}, \BS(2) = \emptyset$, $\BS(3) =\{e_1\}$ and $\BS(4) = \{e_1,e_2\}$ where $\# T (e_1) =2$ and $\# T (e_2) =\# T(e_3) =1$.
\\
(ii) Consider the weighted directed hypernetwork on the right of  Figure~\ref{net1} and the directed network on the left. If $a+b = c+d$, then ${\simIo} = \{ \{1,2\}, \{3, 4\}\}$ for both. \\
(iii) Consider the weighted directed hypernetwork in Figure~\ref{h_net_2} with hyperedges
\begin{align*}
e_1 &= \left( \{1,2\}, \{3\} \right), & e_2 &= \left( \{5,6\}, \{2,3\} \right),\\
e_3 &= \left( \{1,2\}, \{4\} \right), & e_4 &= \left( \{2,3,4\}, \{5\} \right).
\end{align*}
We have that $\BS(3) = \{ e_1, \, e_2\}$, $\BS(4) = \{ e_3\}$ and $w_{e_1} + w_{e_2} = 2 = w_{e_3}$. Thus $3 \simI 4$. In fact, we have 
that $\simIo = \left\{ \{1,6\},\, \{2\},\, \{3,4\},\, \{5\}\right\}$. 
\hfill $\Diamond $
\end{exam}

\begin{figure}
\begin{center}
\begin{tikzpicture}
 [scale=.15,auto=left, node distance=1.5cm, 
 ]
 \node[fill=white,style={circle,draw}] (n1) at (4,0) {\small{1}};
 \node[fill=white,style={circle,draw}] (n2) at (24,0) {\small{2}};
 \node[fill=white,style={circle,draw}] (n3) at (14,9)  {\small{3}};
 \node[fill=white,style={circle,draw}] (n4) at (14,-9)  {\small{4}};
 \node[fill=white,style={circle,draw}] (n5) at (44,0) {\small{5}};
 \node[fill=white,style={circle,draw}] (n6) at (44,9) {\small{6}};
 \node [fill=black,style={circle,scale=0.1}] (e1) at (14,3) { };
  \node [fill=black,style={circle,scale=0.1}] (e11) at (14,5) { };
 \node [fill=black,style={circle,scale=0.1}] (e2) at (14,-3) { };
 \node [fill=black,style={circle,scale=0.1}] (e22) at (14,-5) { };
 \node [fill=black,style={circle,scale=0.1}] (e3) at (38,0) { };
  \node [fill=black,style={circle,scale=0.1}] (e33) at (40,0) { };
 \node [fill=black,style={circle,scale=0.1}] (e4) at (30,6) { };
  \node [fill=black,style={circle,scale=0.1}] (e44) at (28,6) { };
\path
        (n1) [-]  edge[bend right=10,thick] node { } (e1)
        (n2) [-]  edge[bend left=10,thick] node { } (e1)
        (e1) [->] edge[thick] node  {} (e11)
        (e11) [-] edge[thick] node  [near start] {{\tiny $1$}} (n3)
        
        (n1) [-]  edge[bend left=10,thick]  node { } (e2)
        (n2) [-]  edge[bend right=10,thick] node { } (e2)
        (e2) [->] edge[thick] node  {} (e22)    
        (e22) [-] edge[thick] node  [near start]  {{\tiny $2$}} (n4)    

        (n3) [-]  edge[bend right=10,thick]  node { } (e3)
        (n2) [-]  edge[thick] node { } (e3)
         (n4) [-]  edge[bend left=10,thick] node { } (e3)
        (e3) [->] edge[thick] node    {} (e33) 
        (e33) [-] edge[thick] node  [below]  {{\tiny $1$}} (n5) 

       (n5) [-]  edge[bend right=10,thick]  node { } (e4)
        (n6) [-]  edge[bend left=10,thick]  node { } (e4)
        (e4) [->]  edge[thick] node { } (e44)
        (e44) [-] edge[bend left=10,thick]  node  [above]  {} (n3)     
           (e44) [-] edge[bend right=10,thick]  node   [above]  {{\tiny $1$}} (n2)      
        ;
 \end{tikzpicture} 
\end{center}
\caption{A weighted directed graph with six nodes. }\label{h_net_2}
\end{figure}
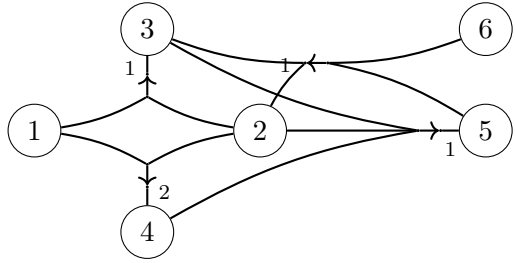

\begin{exam}\label{ex:HypNet}
Consider the directed hypernetwork on the left in Figure~\ref{hyp_ff} with set of nodes $\{1, \dotsc, 6\}$ and five hyperedges, all with weight~$1$: 
\begin{align*}
e_1 &= \left( \{1,2\}, \{4\}\right), &
e_2 &= \left( \{1,2,3\}, \{5\}\right), &
e_3 &= \left( \{4,5\}, \{6\}\right),\\
e_4 &= \left( \{1,2\}, \{1\}\right),& 
e_5 &= \left( \{1,2,3\}, \{2,3\}\right).
\end{align*}
Thus $\simIo = \sset{ \{1,4,6\},\, \{ 2,3,5\} }$.  The admissible equations for this hypernetwork are 
\begin{align*}
\dot{x}_1 &= f(x_1) + Q_2(x_1; x_1,x_2) \\
\dot{x}_2 &= f(x_2) + Q_3(x_2; x_1,x_2,x_3) \\
\dot{x}_3 &= f(x_3) + Q_3(x_3; x_1,x_2,x_3) \\
\dot{x}_4 &= f(x_4) + Q_2(x_4; x_1,x_2) \\
\dot{x}_5 &= f(x_5) + Q_3(x_5; x_1, x_2, x_3)\\
\dot{x}_6 &= f(x_6) + Q_2(x_6; x_4, x_5) 
\end{align*}
where~$Q_2$ and~$Q_3$ are invariant under permutation of the last two and three variables, respectively.

Observe that the set
\[\Delta = \set{x}{x_1 = x_4 = x_6,\ x_2 = x_3 = x_5}\]
is flow-invariant for the above equations and the restriction of those equations to~$\Delta$ is given by
\begin{align*}
\dot{x}_1 &= f(x_1) + Q_2(x_1; x_1,x_2),\\
\dot{x}_2 &= f(x_2) + Q_3(x_2; x_1,x_2,x_2).
\end{align*}
These equations are admissible by the hypernetwork on the right in Figure~\ref{hyp_ff}. 
This motivates the notion of a quotient hypernetwork; we make this explicit in the following section.
\hfill $\Diamond$
\end{exam}

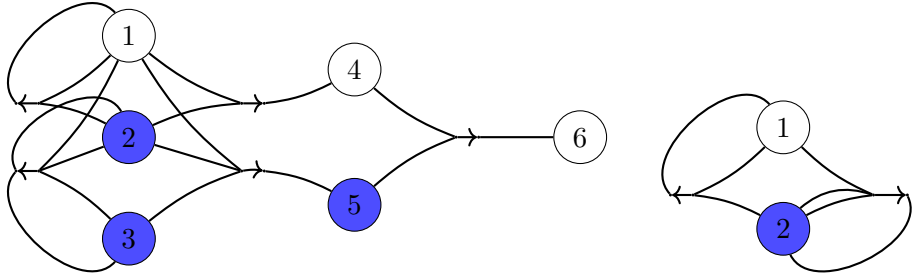
\begin{figure}
\begin{tabular}{cc}
\begin{tikzpicture}
 [scale=.15,auto=left, node distance=1.5cm, 
 ]
 \node[fill=white,style={circle,draw}] (n1) at (4,9)  {\small{1}};
 \node[fill=blue!70,style={circle,draw}] (n2) at (4,0) {\small{2}};
 \node[fill=blue!70,style={circle,draw}] (n3) at (4,-9)  {\small{3}};
 \node[fill=white,style={circle,draw}] (n4) at (24,6) {\small{4}};
   \node[fill=blue!70,style={circle,draw}] (n5) at (24,-6)  {\small{5}};
 \node[fill=white,style={circle,draw}] (n6) at (44,0)  {\small{6}};
 \node [fill=black,style={circle,scale=0.1}] (e1) at (14,3) { };
 \node [fill=black,style={circle,scale=0.1}] (e11) at (16,3) { };
 \node [fill=black,style={circle,scale=0.1}] (e2) at (14,-3) { };
 \node [fill=black,style={circle,scale=0.1}] (e22) at (16,-3) { };
  \node [fill=black,style={circle,scale=0.1}] (e3) at (33,0) { };
   \node [fill=black,style={circle,scale=0.1}] (e33) at (35,0) { };
  \node [fill=black,style={circle,scale=0.1}] (e4) at (-4,3) { };
  \node [fill=black,style={circle,scale=0.1}] (e44) at (-6,3) { };
  \node [fill=black,style={circle,scale=0.1}] (e5) at (-4,-3) { }; 
   \node [fill=black,style={circle,scale=0.1}] (e55) at (-6,-3) { }; 
\path
        (n1) [-]  edge[bend right=10, thick]  node { } (e1)
        (n2) [-]  edge[bend left=10,thick]  node { } (e1)
        (e1) [->] edge[thick]      node  {}  (e11)
        (e11) [-] edge[bend right=10,thick]      node  {}  (n4)

        (n1) [-]  edge[bend right=10,thick]  node { } (e2)
        (n2) [-]  edge[thick] node { } (e2)
        (n3) [-]  edge[bend left=10,thick] node { } (e2)
        (e2) [->] edge[bend left=10,thick]  node {}   (e22)  
        (e22) [-] edge[bend left=10,thick]  node {}   (n5)         
        (n4) [-]  edge[bend right=10,thick]  node { } (e3)
        (n5) [-]  edge[bend left=10,thick]  node { } (e3)
        (e3) [->] edge[thick] node {}   (e33)   
        (e33) [-] edge[thick] node {}   (n6)      
        (n1) [-]  edge[bend left=10,thick] node { } (e4)
        (n2) [-]  edge[bend right=10,thick]  node { } (e4)
        (e4)  [->]  edge[thick] node  {} (e44)
         (e44)  [-]  edge[bend left=90,thick] node [left] {} (n1)
        (n1) [-]  edge[bend left=10,thick]  node { } (e5)
        (n2) [-]  edge[thick] node { } (e5)
         (n3) [-]  edge[bend right=10,thick]  node { } (e5)
          (e5)  [->]  edge[thick] node {} (e55)
        (e55)  [-]  edge[bend right=90,thick] node [left] {} (n3)
        (e55)  [-]  edge[bend left=90,thick] node [left] {} (n2);
        
 \end{tikzpicture}  &
 \begin{tikzpicture}
 [scale=.15,auto=left, node distance=1.5cm, 
 ]
 \node[fill=white,style={circle,draw}] (n1) at (4,9)  {\small{1}};
 \node[fill=blue!70,style={circle,draw}] (n2) at (4,0) {\small{2}};
  \node [fill=black,style={circle,scale=0.1}] (e4) at (-4,3) { };
   \node [fill=black,style={circle,scale=0.1}] (e44) at (-6,3) { };
  \node [fill=black,style={circle,scale=0.1}] (e5) at (12,3) { }; 
   \node [fill=black,style={circle,scale=0.1}] (e55) at (15,3) { }; 

\path
        (n1) [-]  edge[bend left=10,thick]  node { } (e4)
        (n2) [-]  edge[bend right=10,thick]  node { } (e4)
        (e4) [->] edge[thick] node {} (e44) 
        (e44) [-] edge[bend left=90,thick]      node  {}  (n1)
        (n1) [-]  edge[bend right=10,thick]  node { } (e5)
        (n2) [-]  edge[bend left=10,thick] node { } (e5)
        (n2) [-]  edge[bend left=30,thick] node { } (e5)
        (e5) [->]  edge[thick] node { } (e55)
        (e55)  [-]  edge[bend left=90,thick] node [left] {} (n2);
 \end{tikzpicture} 
 \end{tabular}
 \caption{(Left) Feed-forward hypernetwork with three layers and auto-regulation. (Right) The quotient hypernetwork of the hypernetwork on the left by the synchrony space $\set{ x}{ x_1 = x_4 = x_6,\ x_2 = x_3 = x_5}$. }\label{hyp_ff}
 \end{figure}

\subsection{Robust synchrony subspaces} 

\newcommand{\bt}{{\mathord{\bowtie}}}
\newcommand{\tbowtie}{\mathrel{\tilde\bowtie}}
\newcommand{\tbt}{{\mathord{\tilde\bowtie}}}

Consider a hypernetwork~$(\Hc, W)$ with~$n$ cells that take their state in~$V$. Let~$\Delta\subset V^n$ be a subspace of the hypernetwork total phase space defined by equality of cell states---a \emph{polydiagonal subspace}. Define an equivalence relation~$\bowtie$ on the cells of the hypernetwork in the following way: If $x_i = x_j$ is an equality defining~$\Delta$ then $i \bowtie j$. To highlight the underlying equivalence relation, we write $\Delta = \Delta_\bt$. We say that~$\Delta_\bt$ is a \emph{hypernetwork synchrony subspace} when it is left invariant under the flow of every coupled cell system with form consistent with the hypernetwork, as defined above, that is for any admissible vector field.
In slight abuse of notation and terminology, we will forget about the phase space and call~$\Delta$ a \emph{synchrony subspace of the weighted hypergraph~$(\Hc, W)$} if it is a hypernetwork synchrony subspace for any hypernetwork on~$(\Hc, W)$. Finally, if $\Delta \subseteq \R^n$ is a polydiagonal subspace and $K \in M_{n\times n}(\R)$ leaves~$\Delta$ invariant, we also say that~$\Delta$ is a synchrony space of~$K$.

By Lemmas~\ref{lem:AdmEquiv}~and~\ref{lem:ident_split}, we have the following result.

\begin{lemma}
Two hypergraphs~$(\Hc, W)$ and $(\Hc', W')$ such that one can be obtained from the other by one (or more) purely structural splitting/combining {hyperedge} operations
 have the same set of synchrony subspaces.
\end{lemma}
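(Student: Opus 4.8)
The plan is to show that the set of synchrony subspaces is an invariant of the equivalence class of hypernetworks under purely structural operations, by reducing the statement to the two earlier lemmas it explicitly cites. First I would recall that a polydiagonal $\Delta_\bt$ is a synchrony subspace of $(\Hc,W)$ precisely when it is flow-invariant for \emph{every} admissible vector field of the hypernetwork; this is the defining property, so the claim is really a statement about the collections of admissible vector fields. Hence the key observation is that the set of synchrony subspaces depends on $(\Hc,W)$ only through its set of admissible cell vector fields: if two hypernetworks admit exactly the same family of vector fields, then a polydiagonal is invariant for all vector fields of the first if and only if it is invariant for all vector fields of the second, so they have identical synchrony subspaces.

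With that reduction in place, the argument is immediate. By hypothesis $(\Hc',W')$ is obtained from $(\Hc,W)$ by one or more purely structural splitting/combining hyperedge operations as in Definition~\ref{Def:operations}. Lemma~\ref{lem:ident_split} tells us that two hypernetworks related in this way are identical as coupled cell systems, i.e.\ they have the same set of admissible cell vector fields (and indeed, for every admissible family $Q$, the couplings $(\Hc,W,Q)$ and $(\Hc',W',Q)$ induce the same vector field). The role of Lemma~\ref{lem:AdmEquiv} is to guarantee that the splitting/combining operations on heads preserve the admissible class, so that the structural operations considered are indeed among those covered. Combining these, the admissible vector fields of $(\Hc,W)$ and $(\Hc',W')$ coincide.

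I would then conclude as follows: since the admissible vector fields agree, a polydiagonal subspace $\Delta$ is left invariant under the flow of every admissible vector field of $(\Hc,W)$ if and only if it is left invariant under the flow of every admissible vector field of $(\Hc',W')$. By the definition of synchrony subspace of a weighted hypergraph, this says exactly that $\Delta$ is a synchrony subspace of $(\Hc,W)$ if and only if it is a synchrony subspace of $(\Hc',W')$. Therefore the two hypergraphs have the same set of synchrony subspaces, as claimed.

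The only genuinely substantive point is the first reduction step, namely verifying that ``synchrony subspace'' is determined by the set of admissible vector fields rather than by finer data of the hypergraph; but this is essentially tautological from the definition, which quantifies invariance over all admissible vector fields. The remaining steps are a direct appeal to Lemmas~\ref{lem:AdmEquiv} and~\ref{lem:ident_split}, so I do not anticipate any real obstacle. If anything, the subtle part is bookkeeping: ensuring that every structural operation in Definition~\ref{Def:operations} (in particular the general head-splitting in item~(2)) is covered by the cited lemmas, so that the equality of admissible vector field sets holds without additional hypotheses.
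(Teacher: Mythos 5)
Your proposal is correct and follows essentially the same route as the paper, which states this lemma as an immediate consequence of Lemmas~\ref{lem:AdmEquiv} and~\ref{lem:ident_split}: purely structural operations preserve the set of admissible vector fields, and synchrony subspaces are defined by quantifying flow-invariance over exactly that set. You merely make explicit the (essentially tautological) reduction step that the paper leaves implicit.
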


Recall that for traditional coupled cell networks  there is the notion of a \emph{balanced} equivalence relation~$\bowtie$ on the set of cells~\cite{SGP03, GST05}. The balanced equivalence relations~$\bowtie$ are in one-to-one correspondence with synchrony patterns: $\Delta_\bt$ is a synchrony space for the network (that is, it is left invariant under the flow of every coupled cell system with form consistent with the network) if and only if $\bowtie$ is balanced. Motivated by the definition of balanced relation of a network introduced in~\cite{SGP03, GST05} and generalized to the weighted network setup in~\cite{ADF17, AD18}, we now define balanced equivalence relation in the hypernetwork setup.

Consider a hypernetwork $(\Hc, W)$ with set of cells~$C$ and set of hyperedges~$E$. The hypernetwork is the union of \emph{constituent hypernetworks}~$(\Hc_k, W_k)$ with identical set of cells~$C$ and  hyperedges~$E_k$ that contain the hyperedges whose tail sets have cardinality~$k$\footnote{In analogy to $k$-uniform hypergraphs, the directed hypergraphs~$\Hc_k$ can be called $k$-tail-uniform.}; note that $E_k\neq\emptyset$ if and only if $k\in\Br(\Hc)$ with $\Br(\Hc)$ as in~\eqref{eq:Orders}.
For simplicity, we will just write~$\Hc_k$ for~$(\Hc_k, W_k)$ (and $\Hc$ for $(\Hc, W)$) in the following. Trivially, the input equivalence relation of~$\Hc$ is a refinement of the input equivalence relation of every~$\Hc_k$.
 
\begin{Def} 
\label{def:bal_subhyper}
Let~$\bowtie$ be an equivalence relation on~$C$ with~$p$ equivalence classes; for a cell $c\in C$ write $\overline{c}$ for its equivalence class. Now fix an ordering of the $\bowtie$-classes, say $(\overline{c}_1, \dotsc, \overline{c}_p)$, where $c_i\in C$ for $i=1, \dotsc, p$.
Fix~$k\in\Br(\Hc)$ and consider $e\in E_k$ with weight~$w_e$.\\
(i)~The \emph{pattern determined by $\bowtie$ on~$e$} is a vector with~$p$ nonnegative integer entries, $\overrightarrow{m}(e) = (m_1, \dotsc, m_p)$, whose coefficients~$m_i$ indicates the number of cells at the tail set~$T(e)$ of~$e$ which are in the class~$\overline{c}_i$. Thus, as $e \in E_k$, we have that $\sum_{i=1}^p m_i = k$ and some of the~$m_i$ can be zero.\\
(ii)~If $c \in C$ and $e\in\BS_k(c)$ has pattern $\overrightarrow{m}(e)$ determined by~$\bowtie$, the \emph{weight of the pattern $\overrightarrow{m}(e)$ on the cell~$c\in C$ determined by~$\bowtie$} is the sum of the weights of the hyperedges~$e'\in\BS_k(c)$ with $\overrightarrow{m}(e')=\overrightarrow{m}(e)$ determined by~$\bowtie$.\\
(iii)~We say that~$\bowtie$ is \emph{balanced} for the constituent hypernetwork~$\Hc_k$ if for every two distinct cells $c,c'\in C$ such that $c \bowtie c'$, the set of patterns determined by the hyperedges of the sets~$\BS(c)$ and~$\BS(c')$ coincide and each pattern has the same pattern weight on both cells. 
\hfill $\Diamond$
\end{Def}

\begin{Def} 
Consider a hypernetwork~$\Hc$ with cells~$C$, hyperedges~$E$, and constituent hypernetworks~$\Hc_k$ as defined above.
Let~$\bowtie$ be an equivalence relation on~$C$ refining~$\simI$. We say that~$\bowtie$ is \emph{balanced} if it is balanced for every constituent hypernetwork~$\Hc_k$.
\hfill $\Diamond$
\end{Def}

Note that input equivalence is not always a balanced relation; this was already noted by  Stewart~\cite[Section 6]{S07} for standard $n$-node directed graphs. That is, the coarsest balanced equivalence relation refines~$\simI$ but does not need not to coincide with~$\simI$. See also Aldis~\cite{A08} for the description of a polynomial-time algorithm to compute the coarsest balanced equivalence relation of a graph. Since it is a necessary condition for an equivalence relation on the nodes to be balanced is to refine~$\simI$, we include that assumption at the above definition. The coarsest partition corresponds to the most synchrony that is possible.

\begin{rem}
(i) The finest partition where each cell is only equivalent to itself (the equivalence classes are singletons) is trivially balanced. The corresponding synchrony subspace is the entire phase space; the finest partition corresponds to the least synchrony.\\
(ii) The relation with just a single equivalence class (the coarsest partition possible) is balanced if all cells are input equivalent. Indeed, if there is only one equivalence class then for any hyperedge $e\in E(\Hc_k)$ we have only one pattern $\overrightarrow{m}(e) = (k)$. Thus, condition (ii) in Definition~\ref{def:bal_subhyper} for a relation to be balanced is equivalent to condition~(iib) in Definition~\ref{def:InputEquiv} for input equivalence. Since the associated synchrony subspace corresponds to full synchrony, this gives an explicit condition for the existence of full synchrony as an invariant subspace.
\end{rem}

\begin{exam} \label{example_b_notb}
(i) Consider the directed hypernetwork in Figure~\ref{hyp_bal} with node set $C = \{ 1, 2, \dotsc, 14\}$. All the hyperedges have tail set of cardinality $3$ and so $\Hc = \Hc_3$. Moreover, all the cell backward stars are empty, except for cells $4$ and $14$. As $\sum_{e \in \BS(4)} w_e = 2 + 1 = 3$ coincides with  $\sum_{e \in \BS(14)} w_e = 1 + 1 + 1 =3$, we have that $4 \simI 14$, and so the classes of the input relation $\simI$ are $ \{4,14\}$ and $C \setminus \{4,14\}$. Note that in this case~$\simI$ is balanced. Consider now the equivalence $\bowtie$ on $C$ with classes 
\[\overline{1} = \{ 1,5,6,8,9, 11\},\, \overline{2} = \{ 2,3,7,10,12,13\},\, \overline{4} = \{4,14\}\, .\] 
In Figure~\ref{hyp_bal}, cells in the class~$\overline{1}$ have white color, cells in the class~$\overline{2}$ have blue color, and those in the class~$\overline{4}$ have pink color. 
Consider the equivalence classes ordered as $\left(\overline{1}, \overline{2}, \overline{4} \right)$. We have that~$\bowtie$ determines two types of patterns, $(2,1,0)$ and $(1,2,0)$, for the hyperedges in both~$\BS(4)$ and~$\BS(14)$. The pattern $(2,1,0)$ corresponds to a hyperedge with tail set consisting of two white cells and one blue cell; the pattern $(1,2,0)$ corresponds to a hyperedge whose tail set has two blue cells and one white cell. 
For cell~$4$, the incoming hyperedge with pattern~$(2,1,0)$ has weight~$1$ and the hyperedge with pattern $(1,2,0)$ has weight~$2$. 
For cell~$14$, there are two hyperedges in~$\BS(14)$ with pattern $(1,2,0)$ with weight~$1$ each, and there is a hyperedge with pattern $(1,2,0)$ with weight~$1$. It follows that for both cells~$4$ and~$14$ the pattern $(1,2,0)$ has pattern weight~1 and $(2,1,0)$ has pattern weight~$2$. Thus $\bowtie$ is balanced.\\
(ii) For the hypernetwork in Figure~\ref{hyp_notbal}, with node set $C = \{ 1, 2, \dotsc, 12\}$, the input relation $\simI$ has also two classes, $\{4,12\}$ and $C \setminus \{4,12\}$, and is balanced. Consider the refined equivalence~$\bowtie$ on~$C$ with classes 
\[\overline{1} = \{ 1,2,8,9\},\, \overline{3} = \{ 3,5,6,7,10,11\},\, \overline{4} = \{4,12\},\] 
which is not balanced as we will now show. First, note that all the hyperedges have tail set with cardinality~$3$ and all the cell backward stars are empty, except for cells~$4$ and~$12$. 
Second, for the ordering $\left( \overline{1}, \overline{3} , \overline{4} \right)$ of the $\bowtie$-classes, we have that for cell~$4$, the hyperedges in~$\BS(4)$ have patterns $(0,3,0)$ and $(3,0,0)$. For cell $12$, the hyperedges in~$\BS(12)$ have two types of patterns $(2,1,0)$ and $(1,2,0)$. Thus $\bowtie$ is not balanced.  

\hfill $\Diamond$
\end{exam}

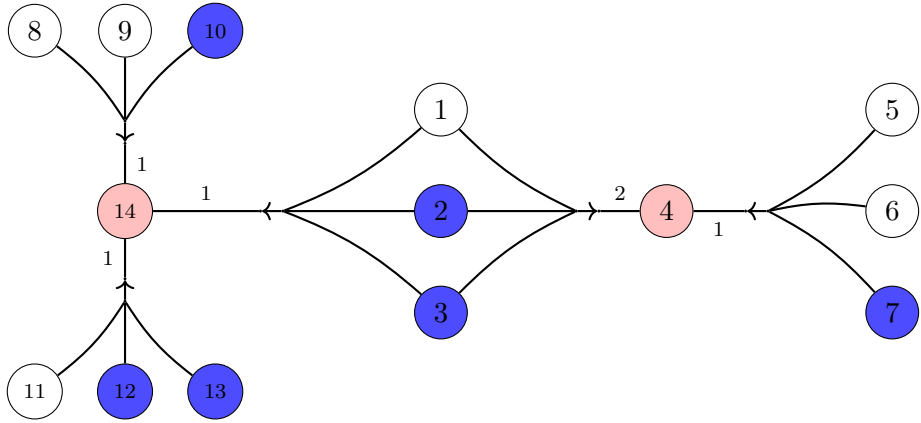
\begin{figure}
\begin{tikzpicture}
 [scale=.15,auto=left, node distance=1.5cm, 
 ]
 \node[fill=pink,style={circle,draw}] (n14) at (-24,0) {\tiny{14}};
 \node[fill=white,style={circle,draw}] (n8) at (-32,16)  {\small{8}};
 \node[fill=white,style={circle,draw}] (n9) at (-24,16)  {\small{9}};
 \node[fill=blue!70,style={circle,draw}] (n10) at (-16,16)  {\tiny{10}};
 \node[fill=white,style={circle,draw}] (n11) at (-32,-16)  {\tiny{11}};
 \node[fill=blue!70,style={circle,draw}] (n12) at (-24,-16)  {\tiny{12}};
 \node[fill=blue!70,style={circle,draw}] (n13) at (-16,-16)  {\tiny{13}};
 \node[fill=white,style={circle,draw}] (n1) at (4,9)  {\small{1}};
 \node[fill=blue!70,style={circle,draw}] (n2) at (4,0) {\small{2}};
 \node[fill=blue!70,style={circle,draw}] (n3) at (4,-9)  {\small{3}};
 \node[fill=pink,style={circle,draw}] (n4) at (24,0) {\small{4}};
  \node[fill=white,style={circle,draw}] (n5) at (44,9)  {\small{5}};
 \node[fill=white,style={circle,draw}] (n6) at (44,0)  {\small{6}};
  \node[fill=blue!70,style={circle,draw}] (n7) at (44,-9)  {\small{7}};
 \node [fill=black,style={circle,scale=0.1}] (e1) at (-10,0) { };
  \node [fill=black,style={circle,scale=0.1}] (e11) at (-12,0) { };
   \node [fill=black,style={circle,scale=0.1}] (e2) at (-24,-8) { };
    \node [fill=black,style={circle,scale=0.1}] (e22) at (-24,-6) { };
  \node [fill=black,style={circle,scale=0.1}] (e3) at (33,0) { };
  \node [fill=black,style={circle,scale=0.1}] (e33) at (31,0) { };
  \node [fill=black,style={circle,scale=0.1}] (e4) at (-24,8) { };
   \node [fill=black,style={circle,scale=0.1}] (e44) at (-24,6) { };
  \node [fill=black,style={circle,scale=0.1}] (e5) at (16,0) { }; 
   \node [fill=black,style={circle,scale=0.1}] (e55) at (18,0) { }; 

\path
        (n1) [-]  edge[bend right=10,thick] node { } (e5)
        (n2) [-]  edge[thick]  node { } (e5)
         (n3) [-]  edge[bend left=10,thick]  node { } (e5)
          (e5) [->] edge[thick] node {}  (e55)       
         (e55) [-] edge[thick] node[above] {{\tiny $2$}}   (n4)       
        (n1) [-]  edge[bend left=10,thick] node { } (e1)
        (n2) [-]  edge[thick]  node { } (e1)
         (n3) [-]  edge[bend right=10,thick]  node { } (e1)
         (e1) [->] edge[thick] node {} (e11)     
         (e11) [-] edge[thick] node[above]  {{\tiny $1$}}   (n14)     
        (n11) [-]  edge[bend right=10,thick] node { } (e2)
        (n12) [-]  edge[thick]  node { } (e2)
         (n13) [-]  edge[bend left=10,thick]  node { } (e2)
          (e2) [->] edge[thick] node {}   (e22)   
         (e22) [-] edge[thick] node {{\tiny $1$}}   (n14)      
        (n8) [-]  edge[bend left=10,thick] node { } (e4)
        (n9) [-]  edge[thick]   node { } (e4)
         (n10) [-]  edge[bend right=10,thick]  node { } (e4)
         (e4) [->] edge[thick] node {} (e44)    
         (e44) [-] edge[thick] node {{\tiny $1$}}   (n14)      
         (n5) [-]  edge[bend left=10,thick] node { } (e3)
         (n6) [-]  edge[bend right=10,thick]  node { } (e3)
         (n7) [-]  edge[bend right=10,thick]  node { } (e3)
         (e3) [->] edge[thick] node {}  (e33)   
         (e33) [-] edge[thick] node {{\tiny $1$}}   (n4)      
;
 \end{tikzpicture}  
 \caption{The equivalence relation with three classes represented by the three colours is balanced for the hypernetwork.}\label{hyp_bal}
 \end{figure}

\begin{figure}
\begin{tikzpicture}
 [scale=.15,auto=left, node distance=1.5cm, 
 ]
  \node[fill=white,style={circle,draw}] (n9) at (-36,9)  {\small{9}};
 \node[fill=blue!70,style={circle,draw}] (n10) at (-36,0) {\tiny{10}};
 \node[fill=blue!70,style={circle,draw}] (n11) at (-36,-9)  {\tiny{11}};
 \node[fill=pink,style={circle,draw}] (n12) at (-24,0) {\tiny{12}};
 \node[fill=white,style={circle,draw}] (n8) at (-6,-10)  {\small{8}};
 \node[fill=white,style={circle,draw}] (n1) at (-12,9)  {\small{1}};
 \node[fill=white,style={circle,draw}] (n2) at (-12,0) {\small{2}};
 \node[fill=blue!70,style={circle,draw}] (n3) at (-12,-9)  {\small{3}};
 \node[fill=pink,style={circle,draw}] (n4) at (0,0) {\small{4}};
  \node[fill=blue!70,style={circle,draw}] (n5) at (12,9)  {\small{5}};
 \node[fill=blue!70,style={circle,draw}] (n6) at (12,0)  {\small{6}};
  \node[fill=blue!70,style={circle,draw}] (n7) at (12,-9)  {\small{7}};

 \node [fill=black,style={circle,scale=0.1}] (e1) at (-18,0) { };
 \node [fill=black,style={circle,scale=0.1}] (e11) at (-20,0) { };
  \node [fill=black,style={circle,scale=0.1}] (e3) at (6,0) { };
   \node [fill=black,style={circle,scale=0.1}] (e33) at (4,0) { };
  \node [fill=black,style={circle,scale=0.1}] (e5) at (-6,0) { }; 
   \node [fill=black,style={circle,scale=0.1}] (e55) at (-4,0) { }; 
  \node [fill=black,style={circle,scale=0.1}] (e4) at (-29,0) { };
  \node [fill=black,style={circle,scale=0.1}] (e44) at (-27,0) { };
\path
        (n9) [-]  edge[bend right=10,thick] node { } (e4)
        (n10) [-]  edge[thick]   node { } (e4)
         (n11) [-]  edge[bend left=10,thick]  node { } (e4)
          (e4) [->] edge[thick] node {}(e44)    
         (e44) [-] edge[thick] node {{\tiny $1$}}   (n12)      
        (n1) [-]  edge[bend left=10,thick] node { } (e1)
        (n2) [-]  edge[thick]  node { } (e1)
         (n3) [-]  edge[bend right=10,thick]  node { } (e1)
          (e1) [->] edge[thick] node {}(e11)     
         (e11) [-] edge[thick] node {{\tiny $1$}}   (n12)           
        (n1) [-]  edge[bend right=20,thick] node { } (e5)
        (n2) [-]  edge[bend right=20, thick]  node { } (e5)
         (n8) [-]  edge[bend left=20,thick]  node { } (e5)
         (e5) [->] edge[thick] node {}   (e55)   
         (e5) [-] edge[thick] node {{\tiny $1$}}   (n4)          
         (n5) [-]  edge[bend left=10,thick] node { } (e3)
         (n6) [-]  edge[bend right=10,thick]  node { } (e3)
         (n7) [-]  edge[bend right=10,thick]  node { } (e3)
         (e3) [->] edge[thick] node {} (e33)   
         (e33) [-] edge[thick] node {{\tiny $1$}}   (n4)      
;
 \end{tikzpicture}  
 \caption{The equivalence relation with three classes represented by the three colours is not balanced for the hypernetwork.}\label{hyp_notbal}
 \end{figure}
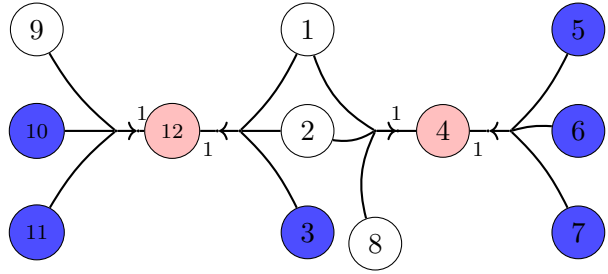
 
 \begin{prop}
 The definition of balanced equivalence relation for hypernetworks includes, as a particular case, the definition of balanced equivalence relation for networks. 
 \end{prop}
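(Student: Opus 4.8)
The plan is to specialize the hypernetwork definition to the case in which the underlying hypergraph is an ordinary (weighted) directed graph, and to check that it reproduces, term by term, the balanced condition of~\cite{SGP03,GST05,ADF17,AD18}. First I would record that a standard weighted directed network is precisely a hypernetwork $(\Hc,W)$ in which every hyperedge $e$ satisfies $\#T(e)=1$ (and, after the purely structural splitting of Lemma~\ref{lem:AdmEquiv}, $\#H(e)=1$ as well, although for balancedness only the tails and the backward stars matter). Consequently $\Br(\Hc)=\{1\}$, so there is a single constituent hypernetwork $\Hc=\Hc_1$, and $\bowtie$ is balanced if and only if it is balanced for $\Hc_1$. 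This eliminates the sum-over-$k$ aspect of the definition and lets me work with the single value $k=1$.

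Next I would unwind the pattern bookkeeping of Definition~\ref{def:bal_subhyper} in this regime. Fix an ordering $(\overline{c}_1,\dots,\overline{c}_p)$ of the $\bowtie$-classes. Since every tail $T(e)$ consists of a single node, say lying in the class $\overline{c}_i$, the pattern of $e$ is the $i$-th standard unit vector, $\overrightarrow{m}(e)=(0,\dots,1,\dots,0)$ with the $1$ in position $i$; thus the patterns occurring on a cell are in bijection with the classes that actually send an edge into that cell. By Definition~\ref{def:bal_subhyper}(ii), the weight of this pattern on a cell $c$ is $\sum w_e$ over those $e\in\BS(c)$ whose unique tail node lies in $\overline{c}_i$---that is, exactly the total weight of the connections into $c$ coming from the class $\overline{c}_i$.

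Then I would translate the balance requirement. Applying Definition~\ref{def:bal_subhyper}(iii) to a pair $c\bowtie c'$ states that the same unit vectors occur as patterns for $\BS(c)$ and $\BS(c')$ and that each carries equal pattern weight on the two cells. By the previous step this is precisely the assertion that, for every class $\overline{c}_i$, the total in-weight to $c$ from $\overline{c}_i$ equals the total in-weight to $c'$ from $\overline{c}_i$. This is verbatim the definition of a balanced equivalence relation for weighted networks used in~\cite{ADF17,AD18}, and, when multiplicities are encoded as integer weights, it is equivalent to the edge-type- and class-preserving bijection between input sets of~\cite{SGP03,GST05}. I would close by observing that the hypothesis that $\bowtie$ refine $\simI$ (Definition~\ref{def:InputEquiv}, condition~(iib)) imposes the same constraint in both settings, so the two notions coincide on ordinary networks.

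The main obstacle will be this final translation: reconciling the combinatorial clause ``the sets of patterns coincide and each pattern has equal pattern weight'' with the classical formulation, stated either as a class-indexed weight sum (weighted case) or as the existence of a type-preserving bijection between backward stars (unweighted case). Care is needed to verify that, for single-node tails, ``same multiset of patterns with equal weights'' is exactly ``same class-indexed in-weight profile,'' and that in the unweighted multiplicity model this in turn amounts to possessing a class- and type-preserving bijection of the input sets; everything else is routine bookkeeping.
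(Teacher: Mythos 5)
Your proposal is correct and follows essentially the same route as the paper's own proof: specialize to tail cardinality one so that $\Hc=\Hc_1$, observe that the patterns degenerate to unit vectors indexed by the $\bowtie$-classes with pattern weights equal to the class-indexed in-weight sums, and conclude that the balance condition of Definition~\ref{def:bal_subhyper} reduces verbatim to the classical (weighted) balanced condition. The extra care you flag about reconciling the weighted-sum formulation with the bijection formulation of~\cite{SGP03,GST05} is a reasonable refinement but does not change the argument.
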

 
 \begin{proof} 
Let~$\Hc$ be a hypernetwork which is a network, that is, the tail sets of all the hyperedges have cardinality~$1$. Thus~$\Hc_1=\Hc$. 
Given an equivalence relation $\bowtie$ on the set of cells of the network~$\Hc$, we have then to consider Definition~\ref{def:bal_subhyper}. Let~$p$ be the number of $\bowtie$-classes and fix an ordering of those classes, say $(\overline{c}_1, \dotsc, \overline{c}_p)$.
For every edge~$e$ in~$\Hc$, the pattern determined by $\bowtie$ on~$e$, $\overrightarrow{m}(e)$,  is a vector with one entry equal to~$1$ and all the other $p-1$ entries equal to~$0$. 
For a cell~$c$ and an edge~$e$ with $H(e) = \{c\}$ if the the $i$th entry is the nonzero entry of the pattern $\overrightarrow{m}(e)$ determined by $\bowtie$ then the pattern weight of the pattern  $\overrightarrow{m}(e)$ on the cell $c$ is the sum of the weights of the edges with $H(e) = \{c \}$ that have the same pattern $\overrightarrow{m}(e)$, that is, the sum of the weights of the edges with ${H}(e) = \{c\}$ and $T(e) \in \overline{c}_i$.
Then, by Definition~\ref{def:bal_subhyper}, $\bowtie$ is balanced for the network~$\Hc$ when, for every two distinct cells $c,c'\in C$ such that $c \bowtie c'$, the pattern sets  determined by the edges of the sets~$\BS(c)$ and~$\BS(c')$ coincide, that is, the pattern set  determined by the edges with $H(e) = \sset{c}$ coincides with the pattern set  determined by the edges with $H(e) = \sset{c'}$, which means that cell $c$ receives edges from cells in the class~$\overline{c}_i$ if and only if 
cell~$c'$ also receives edges from cells in that class. Moreover, each pattern has the same pattern weight on both cells, which means that the sum of the weights of the edges from cells in class~$\overline{c}_i$ to cell~$c$ equals the sum of the weights of the edges from cells in class~$\overline{c}_i$ to cell~$c'$. 
\end{proof}

\subsection{Quotients}\label{sec:Quotients}

Given a weighted directed hypergraph~$(\Hc, W)$ and a balanced equivalence relation~$\bowtie$ on the cells, we now define the quotient of~$(\Hc, W)$ with respect to~$\bt$. The quotient describes the admissible vector fields for~$(\Hc, W)$ when restricted to the synchrony space~$\Delta_{\bowtie}$. To keep notation simple, we assume---without loss of generality by Lemma~\ref{lem:AdmEquiv}---that all hyperedges in~$E(\Hc)$ have tails of cardinality one.

\begin{Def} 
Let~$\Hc$ be a hypernetwork with cells~$C$ and hyperedges~$E$ (whose heads have cardinality one by assumption). 
Let~$\bowtie$ be a balanced equivalence relation on~$C$ with~$p$ classes, say $\overline{C}=(\overline{c}_1, \dotsc, \overline{c}_p)$.\\
(i)~Let $e\in E(\Hc)$ be a hyperedge with head~$\sset{c}$ and pattern $\overrightarrow{m}(e) = (m_1, \dotsc, m_p)$ onto~$c$. The \emph{projected hyperedge~$\overline{e}$ with respect to~$\bt$} has head $H(\overline{e}) = \sset{\overline{c}}$ (where~$\overline{c}$ denotes the equivalence class of~$c$) and tail multiset\footnote{Note that repeated entries are maintained for the tail of~$\bar{e}$ as it is a multiset.}
\[T(\overline{e}) = \tsset{\underbrace{\overline{c}_1, \dotsc, \overline{c}_1}_{m_1 \text{ times}}, \underbrace{\overline{c}_2, \dotsc, \overline{c}_2}_{m_2 \text{ times}}, \dotsc, \underbrace{\overline{c}_p, \dotsc, \overline{c}_p}_{m_p \text{ times}}}.\]
The \emph{weight~$\overline{w}$ of~$\overline{e}$} is the pattern weight~$w$ of~$\overrightarrow{m}(e)$.\\
(ii) Let~$\overline{E}$~the hyperedges defined in~(i) and~$\overline{W}$ the corresponding weights. Write $\overline{\Hc} = (\overline{C}, \overline{E})$. The \emph{quotient of~$\Hc$ by~$\bowtie$}, is the hypernetwork $\Hc/\bt := (\overline{\Hc}, \overline{W})$.  
\hfill $\Diamond$
\end{Def}

By definition, all hyperedges of~$\overline{\Hc}$ have a head of cardinality one. For a cell $\bar c$ of~$\overline{\Hc}$, the backward star~$\BS(\overline{c})$ is formed by the hyperedges~$\overline{e}$ derived from each distinct pattern determined by~$\bowtie$ in~$\BS(c)$.

\begin{rem} 
Recall that different hypernetworks (with distinct underlying hypergraphs) can be identical as coupled cell systems (see Lemma~\ref{lem:AdmEquiv}).\\
(i)~Any hypernetworks that are identical to each other as coupled cell networks via Lemma~\ref{lem:AdmEquiv} have the same quotient, while their incidence digraph differs in general.\\
(ii)~The quotient~$\Hc/\bt = (\overline{\Hc}, \overline{W})$ may be equivalent as a coupled cell network to a different hypernetwork~$(\overline{\Hc}', \overline{W}')$ (for example, by combining edges that have the same tail set).
However, in our context the quotient is uniquely defined by the convention that the hyperedges in the quotient will have a head of cardinality one. 
\hfill $\Diamond$
\end{rem}

\begin{exam}
Consider the directed hypergraph $\Hc=(C,E)$ on the left in Figure~\ref{figure_example1}. Thus $C = \sset{1, \dotsc, 6}$ and 
\begin{align*}
e_1 &= \left( \{ 2,5\}, \{1 \} \right), & e_2 &= \left( \{ 2\}, \{2,4 \} \right), & e_3 &= \left( \{ 1,2\}, \{6 \} \right),\\
e_4 &= \left( \{ 4,6\}, \{3, 5 \} \right), & e_5 &= \left( \{ 4\}, \{3 \} \right).
\end{align*}
where each edge has weight~$w_e = 1$. 
The resulting hypernetwork is identical as a coupled cell system to the hypernetwork with underlying hypergraph $\Hc'=(C,E')$ such that the head~$H(e)$ of any hyperedge~$e\in E'$ has cardinality 1. Specifically, by splitting the head sets of hyperedges $e_2$ and $e_4$ we have
\begin{align*}
E' = \sset{ e_1, \left( \sset{2}, \sset{2} \right), \left(\sset{2}, \sset{4} \right), e_3, \left( \sset{4,6}, \sset{3} \right), \left( \sset{4,6}, \sset{5} \right), e_5}\, .
\end{align*}
By assumption in the beginning of this section, we will identify $\Hc=(C,E)$ with $\Hc'=(C,E')$ and drop the~$'$. 

For the balanced coloring indicated by the shading of the nodes in Figure~\ref{figure_example1}, the cells of the quotient are given by the equivalence classes 
\[
\overline{C} = \sset{\overline{1}=\sset{1,5,6}, \overline{2}=\sset{2,4}, \overline{3}=\sset{3}}\, .
\] 
The sets $\BS(\overline{1}), \BS(\overline{2}), \BS(\overline{3})$ are obtained from $\BS(1), \BS(2)$ and $\BS(3)$, respectively, and thus
\[
\overline{E} = \sset{
\left(\sset{\overline{1},\overline{2}}, \sset{\overline{1}} \right),
\left(\sset{\overline{2}}, \sset{\overline{2}} \right), 
\left(\sset{\overline{1},\overline{2}}, \sset{\overline{3}} \right),
\left(\sset{\overline{2}}, \sset{\overline{3}} \right)
},
\]
all with weight equal to~$1$.
Note that~$\Hc/\bt$ is identical as coupled cell hypernetwork to the hypernetwork shown in Figure~\ref{figure_example1} to the right.
\hfill $\Diamond$
\end{exam}

\begin{thm}
Suppose that $(\Hc, W)$ is a hypernetwork and~$\bt$ is a balanced equivalence relation on~$(\Hc, W)$. The quotient~$\Hc/\bt = (\overline{\Hc}, \overline{W})$ is well defined. Moreover, the dynamics of~$(\Hc, W)$ restricted to~$\Delta_\bt$ correspond to the evolution of the coupled cell hypernetwork~$\Hc/\bt$.
\end{thm}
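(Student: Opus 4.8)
The plan is to establish the two assertions—well-definedness of the quotient and the dynamical correspondence—in turn. For well-definedness, the only potential ambiguity in the construction of $\Hc/\bt$ is its apparent dependence on the representative cell $c$ chosen from each class $\overline{c}$ to generate the projected hyperedges in $\BS(\overline{c})$. First I would fix two representatives $c, c'$ with $c \bt c'$ and invoke the balanced condition (Definition~\ref{def:bal_subhyper}(iii)) directly: the set of patterns $\set{\overrightarrow{m}(e)}{e \in \BS(c)}$ coincides with $\set{\overrightarrow{m}(e)}{e \in \BS(c')}$, and each such pattern carries the same pattern weight on both cells. Since the projected hyperedge $\overline{e}$ (its tail multiset and head) and its weight $\overline{w}$ are determined entirely by the pattern and its pattern weight, the resulting backward star $\BS(\overline{c})$ is independent of the representative, so $(\overline{\Hc}, \overline{W})$ is well defined.

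For the dynamical correspondence I would work with a fixed admissible family $Q = (Q_k)$ and the associated cell vector field $F$ of $(\Hc, W)$ as in~\eqref{eq:CCHN}, using the standing assumption (justified by Lemma~\ref{lem:AdmEquiv}) that every hyperedge has a head of cardinality one. Identify $\Delta_\bt$ with $V^p$ by sending $x\in\Delta_\bt$ to the tuple $(y_1, \dotsc, y_p)$, where $y_\alpha$ is the common value of all cells in $\overline{c}_\alpha$; thus $x_j = y_\alpha$ whenever $j \in \overline{c}_\alpha$. The essential observation is that, because each $Q_k$ is invariant under permutation of its last $k$ arguments, the value $Q_k(x_i; x_{T(e)})$ evaluated at $x \in \Delta_\bt$ depends on $e$ only through the multiset of classes of the tail cells, that is, only through the pattern $\overrightarrow{m}(e)$. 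Consequently, for each $i$ I can regroup the inner sum $\sum_{e \in \BS_k(i)} w_e\, Q_k(x_i; x_{T(e)})$ as a sum over the distinct patterns, in which a pattern $\overrightarrow{m}$ contributes its pattern weight times $Q_k$ evaluated on the tail multiset $T(\overline{e})$ of the projected hyperedge. This is precisely the coupling term of the quotient's admissible vector field attached to $\overline{e}$ with weight $\overline{w}$.

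It remains to record flow-invariance of $\Delta_\bt$, so that ``restricted to $\Delta_\bt$'' is meaningful. For $i \bt j$ the internal terms agree on $\Delta_\bt$, since $f(x_i) = f(y_\alpha) = f(x_j)$ with $\alpha$ the common class, and the coupling terms agree because the balanced condition guarantees that $\BS(i)$ and $\BS(j)$ realize the same patterns with the same pattern weights. Hence $F_i(x) = F_j(x)$ for every $x \in \Delta_\bt$, so the vector field preserves each defining equality $x_i = x_j$ and $\Delta_\bt$ is invariant. Passing to the class values $y_\alpha$ and using the regrouping of the previous paragraph then identifies the restricted equations $\dot x_i = F_i(x)|_{\Delta_\bt}$, for $i \in \overline{c}_\alpha$, with the admissible equation of the cell $\overline{c}_\alpha$ in $\Hc/\bt$ for the \emph{same} coupling family $Q$, which is exactly the asserted correspondence.

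The main obstacle I anticipate is the bookkeeping in the second paragraph: one must verify carefully that grouping the hyperedges of $\BS_k(i)$ by pattern and invoking the symmetry of $Q_k$ reproduces precisely the tail \emph{multiset} $T(\overline{e})$ with the correct multiplicities $m_1, \dotsc, m_p$, and precisely the pattern weight $\overline{w}$ prescribed by the quotient, rather than some coarser or finer aggregation. Everything else is a direct translation between the summation defining $F_i$ and the summation defining the admissible vector field on $\Hc/\bt$, together with the routine check that the class values $(y_1,\dotsc,y_p)$ evolve autonomously.
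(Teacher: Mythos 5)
Your proposal is correct and follows essentially the same route as the paper's (much terser) proof: well-definedness from the balanced condition making patterns and pattern weights representative-independent, and the dynamical correspondence from regrouping the coupling sums by pattern, using the permutation invariance of $Q_k$, so that each pattern contributes its pattern weight times $Q_k$ evaluated on the projected tail multiset. Your additional explicit check of flow-invariance of $\Delta_\bt$ is the content of Theorem~\ref{thm:sync_bal}, which the paper relies on implicitly.
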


\begin{proof}
The first assertion follows from the definition of a balanced equivalence relation: An equivalence relation is balanced exactly when the weight of a pattern is the same for all cells in the same equivalence class.
The second assertion follows from the construction of the quotient: (a)~The heads of the hyperedges~$\overline{e}$ in the quotient identify synchronized cells and (b)~the weights of the edges in the quotient sum---for a fixed head---the weights of the corresponding edges with the same pattern.
\end{proof}

\begin{rem}
The (somewhat nonstandard) convention to allow multisets as tails of directed hyperedges becomes essential in the coupled cell hypernetwork formalism presented in this work that considers generic features for all admissible vector fields simultaneously. By contrast, if one considers a specific hypernetwork coupling $(\Hc, W, Q)$, then one may be able to identify edges whose tail sets have cardinality~$k$ with edges with lower tail set cardinalities. For example, consider cells whose phase space is~$\R$ and hypergraph coupling with $Q_2(x_1; x_2, x_3) = x_2x_3$, $Q_1(x_1; x_2)=x_2^2$. If $2\bowtie 3$ the quotient of the edge $e=(\sset{2,3}, \sset{1})$ can be identified with an edge $e'=(\sset{2}, \sset{1})$ of the same weight.
\hfill $\Diamond$
\end{rem}

\begin{exam} Recall the hypernetwork $\Hc=\Hc_3$ in Figure~\ref{hyp_bal} and the balanced equivalence relation $\bowtie$ with classes $\overline{1} = \{ 1,5,6,8,9, 11\},\, \overline{2} = \{ 2,3,7,10,12,13\},\, \overline{4} = \{4,14\}$. The quotient network $\Hc/\bt$ has set of nodes $\overline{1}, \overline{2} , \overline{4}$ and $\BS( \overline{4})$ is formed by two hyperedges from the two distinct patterns determined by $\bowtie$ in $\BS(4)$ as described in Example~\ref{example_b_notb}; see Figure~\ref{hyp_quo}.
\hfill $\Diamond$
\end{exam}

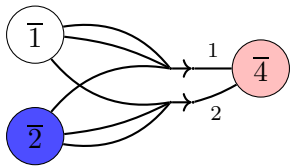
\begin{figure}
\begin{tikzpicture}
 [scale=.15,auto=left, node distance=1.5cm, 
 ]
 \node[fill=white,style={circle,draw}] (n1) at (4,9)  {\small{$\overline{1}$}};
 \node[fill=blue!70,style={circle,draw}] (n2) at (4,0) {\small{$\overline{2}$}};
 \node[fill=pink,style={circle,draw}] (n4) at (24,6) {\small{$\overline{4}$}};
   \node [fill=black,style={circle,scale=0.1}] (e3) at (16,6) { };
    \node [fill=black,style={circle,scale=0.1}] (e33) at (18,6) { };
  \node [fill=black,style={circle,scale=0.1}] (e5) at (16,3) { }; 
   \node [fill=black,style={circle,scale=0.1}] (e55) at (18,3) { };
\path
        (n1) [-]  edge[bend right=30,thick] node { } (e5)
        (n2) [-]  edge[bend right=10,thick]   node { } (e5)
        (n2) [-]  edge[bend right=30,thick]  node { } (e5)
         (e5) [->] edge[thick] node [below] {}  (e55)     
            (e55) [-] edge[bend right=7, thick] node [below] {{\tiny $2$}}   (n4)   
         (n1) [-]  edge[bend left=10,thick] node { } (e3)
         (n1) [-]  edge[bend left=30,thick]  node { } (e3)
         (n2) [-]  edge[bend  left=30,thick]  node { } (e3)
         (e3) [->] edge[thick] node {}  (e33)  
         (e33) [-] edge[thick] node {{\tiny $1$}}   (n4)      
;
 \end{tikzpicture}  
 \caption{The quotient hypernetwork of the hypernetwork in Figure~\ref{hyp_bal} by the balanced equivalence relation on the set of nodes whose classes are represented by the three colours.}\label{hyp_quo}
 \end{figure}

\begin{thm} \label{thm:sync_bal}
Let~$\Hc$ be a weighted directed hypergraph on the node set $C = \{1, 2, \dotsc, n\}$ and hyperedge set~$E$. An equivalence relation~$\bowtie$ on the node set is balanced if and only if for any hypernetwork associated with  $\Hc$, the polydiagonal space~$\Delta_\bt$ defined in terms of the equalities on the cell coordinates~$x_i$, for $i \in C$, determined by $\bowtie$,  is a synchrony space of~$\Hc$.  
\end{thm}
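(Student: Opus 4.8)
The plan is to prove both implications through the standard flow-invariance criterion for polydiagonal subspaces: $\Delta_\bt$ is invariant under an admissible vector field $F$ precisely when $F_i(x)=F_j(x)$ for every $x\in\Delta_\bt$ and every pair $i\bowtie j$. By Lemma~\ref{lem:AdmEquiv} I may assume that all heads have cardinality one, so each hyperedge contributes to exactly one cell's equation. The key bookkeeping device is that on $\Delta_\bt$ the state $x$ is constant on each $\bowtie$-class; writing $\xi_1,\dots,\xi_p$ for the (free) values on the $p$ classes, the tail data $x_{T(e)}$ of a hyperedge $e$ with $\#T(e)=k$ is, as a multiset, completely determined by the pattern $\overrightarrow{m}(e)=(m_1,\dots,m_p)$, namely $m_l$ copies of $\xi_l$. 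Since each $Q_k$ is symmetric in its last $k$ arguments, the value $Q_k(x_i;x_{T(e)})$ restricted to $\Delta_\bt$ depends only on $k$, on the pattern $\overrightarrow{m}(e)$, and on the class value $\xi_{[i]}$ of the head.

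For the forward implication I would fix an arbitrary admissible family $Q$ and a pair $i\bowtie j$ with $i\neq j$, and evaluate $F_i-F_j$ on $\Delta_\bt$. The internal terms cancel because $x_i=x_j$. Grouping each sum $\sum_{e\in\BS_k(i)}w_eQ_k(x_i;x_{T(e)})$ by pattern turns it into $\sum_{\overrightarrow{m}}W_k^i(\overrightarrow{m})\,\Psi_k^{\overrightarrow{m}}(\xi)$, where $W_k^i(\overrightarrow{m})$ is the pattern weight of $\overrightarrow{m}$ on $i$ and $\Psi_k^{\overrightarrow{m}}$ is the common value identified above (independent of the cell, since $\xi_{[i]}=\xi_{[j]}$). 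Balance of $\bowtie$ means exactly that $W_k^i(\overrightarrow{m})=W_k^j(\overrightarrow{m})$ for every $k$ and every pattern, so the two grouped sums agree term by term and $F_i=F_j$ on $\Delta_\bt$. As this uses nothing about the particular $V$, $f$, or $Q$, the subspace $\Delta_\bt$ is invariant for every associated hypernetwork.

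For the converse I would argue contrapositively by building a single explicit admissible coupling that detects any failure of balance. Assuming $\Delta_\bt$ is a synchrony space, it is in particular invariant for scalar cells $V=\real$ under the admissible choice $Q_k(y;z_1,\dots,z_k)=\prod_{j=1}^k z_j$ for $k\in\Br(\Hc)$ (with $f$ arbitrary, as it cancels). With this coupling a pattern $\overrightarrow{m}=(m_1,\dots,m_p)$ produces the monomial $\prod_{l=1}^p\xi_l^{m_l}$, so the identity $F_i=F_j$ on $\Delta_\bt\cong\real^p$ becomes the polynomial identity $\sum_k\sum_{\overrightarrow{m}}\big(W_k^i(\overrightarrow{m})-W_k^j(\overrightarrow{m})\big)\prod_l\xi_l^{m_l}=0$ valid for all $(\xi_1,\dots,\xi_p)\in\real^p$. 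Distinct patterns are distinct exponent vectors, hence distinct monomials, which are linearly independent; therefore every coefficient vanishes, i.e. $W_k^i(\overrightarrow{m})=W_k^j(\overrightarrow{m})$ for all $k$ and $\overrightarrow{m}$ whenever $i\bowtie j$. This is precisely the balance condition of Definition~\ref{def:bal_subhyper}, and summing over all patterns of cardinality $k$ recovers condition~(iib) of input equivalence, so $\bowtie$ refines $\simI$.

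I expect the converse to be the main obstacle: the substance is selecting one admissible family of coupling functions that simultaneously separates every cardinality and every tail-pattern, and then justifying the linear-independence step cleanly. The product coupling accomplishes this in one stroke by encoding each pattern as a monomial exponent vector, but care is needed in the multiset bookkeeping (repeated tail entries must be counted), in verifying admissibility ($Q_k\neq 0$ exactly for $k\in\Br(\Hc)$), and in the convention---harmless here---of reading ``the pattern sets coincide'' through equality of all pattern weights, including those that happen to be zero.
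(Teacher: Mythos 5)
Your proof is correct and follows essentially the same route as the paper: the converse is established by the identical device of taking scalar cells with the product coupling $Q_k(x_0;x_1,\dotsc,x_k)=x_1\cdots x_k$ and reading off pattern weights as coefficients of linearly independent monomials. Your forward direction is in fact spelled out in more detail than the paper's (which simply appeals to the definition of balanced), but the underlying grouping-by-pattern argument is the same.
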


\begin{proof}
By definition of~$\bowtie$ being balanced, it follows that if~$\bowtie$ is balanced then~$\Delta_\bt$ is a synchrony space of~$\Hc$. Now, if~$\Delta_\bt$ is a synchrony space of~$\Hc$, then in particular, we can consider the admissible equations  
where all the internal cell phase spaces  are $\R$ and the coupling functions~$Q_k$ have the form
\[
Q_k(x_0;x_1, x_2, \dotsc, x_k) = x_1 x_2 \cdots x_k\, . 
\]
Consider the decomposition of $H$ into its constituent hypernetworks~$\Hc_k$, for $k=j_1, \dotsc, j_r$ according to the (positive and integer) cardinalities~$k$  of the tail sets of its hyperedges.  
Given two distinct cells~$c,c'$ such that $x_c = x_{c'}$ is one of the  equalities defining~$\Delta_\bt$, we have that the corresponding cell equations, at the restriction to~$\Delta_\bt$ have to coincide. The restriction of the cells~$c$ and~$c'$ equations, are so polynomials which are each the sum of homogeneous polynomials of degrees $j_1, \dotsc, j_r$. Thus 
the two polynomials coincide if and only if they coincide degree by degree. (Equivalently, if and only if $\Delta_\bt$ is a synchrony space of each constituent hypernetwork~$\Hc_k$.) 
For a fixed degree~$k$, then each distinct monomial that is appearing at the equation for cell~$c$, it has also to appear at equation for cell~$c'$, and with the same coefficient. Now each monomial of the~$c$ equation ($c'$~equation)  with coefficient~$m_c$ ($m_{c'}$) corresponds to a pattern~$\overrightarrow{m}(e_c)$ ($\overrightarrow{m}(e_{c'})$) determined by~$\bowtie$ at the hyperedges in~$\BS(c)$ ($\BS(c')$) with weight  $m_c$ ($m_{c'}$). Thus the set of the distinct patterns determined by~$\bowtie$ in~$\BS(c)$ and~$\BS(c')$ must coincide, and the corresponding multiplicities have also to coincide. That is, $\bowtie$ is balanced. 
\end{proof}

Trivially, we have the following result.

\newcommand{\Qc}{\mathcal{Q}}

\begin{thm}\label{thm:dyn_quot}
Let~$\Hc$ be a weighted directed hypernetwork on the node set $C = \{1, 2, \dotsc, n\}$ and hyperedge set~$E$. Let~$\bowtie$ be a balanced equivalence relation on~$C$. Let~$\Qc$ be the quotient hypernetwork~$\Hc/\bt$. Then:\\
(i) Any coupled cell system consistent with~$\Hc$ restricted to~$\Delta_\bt$ is a coupled cell system consistent with the quotient hypernetwork~$\Qc$. \\
(ii) Any coupled cell system consistent with the hypernetwork~$\Qc$ is the restriction of a coupled cell system consistent with the hypernetwork~$\Hc$ restricted to~$\Delta_\bt$.  
\end{thm}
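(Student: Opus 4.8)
The plan is to reduce both parts to a single computation, namely the explicit description of an admissible vector field of~$\Hc$ once it is restricted to~$\Delta_\bt$: part~(i) is essentially a restatement of the quotient-dynamics theorem established above, and part~(ii) will follow by running the same identification in reverse. Throughout I use that~$\bowtie$ balanced guarantees, via Theorem~\ref{thm:sync_bal}, that~$\Delta_\bt$ is flow-invariant, so that ``restricted to~$\Delta_\bt$'' defines a genuine dynamical system; and I identify~$\Delta_\bt$ with the total phase space~$V^p$ of~$\Qc$ through the class coordinates $(y_1, \dotsc, y_p)$, where $y_\alpha$ is the common state of the cells in the class~$\overline{c}_\alpha$ and $p$ is the number of $\bowtie$-classes.

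For part~(i), I would start from an admissible field~$F$ of~$\Hc$ for a family $(Q_k)$ as in~\eqref{eq:CCHN}, fix a class~$\overline{c}_\alpha$ and a representative cell~$i$ in it, and restrict~$F_i$ to~$\Delta_\bt$. On~$\Delta_\bt$ the tail variables $x_{T(e)}$ of a hyperedge $e\in\BS_k(i)$ collapse to the class states, so that by the permutation invariance of~$Q_k$ in its last~$k$ entries each summand becomes $Q_k\bigl(y_\alpha; y^{\overrightarrow{m}(e)}\bigr)$, where $y^{\overrightarrow{m}(e)}$ is the tuple carrying~$m_j$ copies of~$y_j$ prescribed by the pattern $\overrightarrow{m}(e)=(m_1, \dotsc, m_p)$. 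Grouping the hyperedges of~$\BS_k(i)$ by their pattern collects the weights~$w_e$ into the pattern weight of~$\overrightarrow{m}(e)$, which by construction is exactly the weight~$\overline{w}$ of the projected hyperedge~$\overline{e}$, whose tail multiset realises~$y^{\overrightarrow{m}(e)}$. Summing over~$k$ and over the distinct patterns therefore reproduces, verbatim, the admissible field of~$\Qc$ for the same family $(Q_k)$. The one point that needs care is that this restricted field depends only on~$\overline{c}_\alpha$ and not on the chosen representative~$i$; this is precisely the defining property of a balanced relation---the set of patterns and their pattern weights agree across a class---and it is already contained in the quotient-dynamics theorem established above.

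For part~(ii), the key preliminary observation is that projection preserves the cardinality of a tail, since $\sum_i m_i = k$ for every $e\in E_k$; hence $\Br(\Qc)=\Br(\Hc)$, and by the admissibility condition a family $(Q_k)$ is admissible for~$\Qc$ if and only if it is admissible for~$\Hc$. Given any coupled cell system~$\overline{F}$ consistent with~$\Qc$, realised by an admissible family $(Q_k)$, I would take the admissible field~$F$ of~$\Hc$ for the very same family; part~(i) then yields $F|_{\Delta_\bt}=\overline{F}$ under the identification above, exhibiting~$\overline{F}$ as the restriction of a coupled cell system consistent with~$\Hc$. The only genuine work in the whole argument is the pattern bookkeeping in part~(i), and the only real obstacle---that the restriction is consistently defined on classes---is dispatched by balancedness; everything else is formal, which is why the statement is essentially immediate.
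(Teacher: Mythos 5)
Your argument is correct and follows the same route the paper intends: the paper offers no proof beyond the word ``Trivially,'' relying on the preceding theorem that the restricted dynamics correspond to the quotient hypernetwork, and your part~(i) is exactly that computation (restrict, use permutation invariance of~$Q_k$, group hyperedges by pattern so the weights collect into the pattern weights defining~$\overline{W}$, with balancedness giving independence of the class representative). Your part~(ii) correctly supplies the converse detail the paper leaves implicit, namely that $\Br(\Qc)=\Br(\Hc)$ so the same admissible family~$(Q_k)$ serves for both hypernetworks and the identification runs in reverse.
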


\begin{exam}
Consider the hypernetwork~$\Hc$ in Figure~\ref{hyp_bal} and the balanced equivalence relation~$\bowtie$ presented in Example~\ref{example_b_notb}(i). Consider coupled cell systems consistent with~$\Hc$, where the cell phase space is~$V$, the internal dynamics is given by~$f: V \to V$ and the coupling by $Q_3:\, V^4 \to V$. Since the equivalence relation~$\bowtie$ is balanced, then the polydiagonal space
\begin{align*}
\Delta_\bt = \rset{x}{\begin{array}{l}
 x_1 = x_5 = x_6 = x_8 = x_9 = x_{11},\\
 x_2 = x_3= x_7 = x_{10} = x_{12} = x_{13}, x_4 = x_{14} 
 \end{array}
 }
\end{align*}
is a synchrony space of~$\Hc$, that is, equations for~$\Hc$ leave~$\Delta_\bt$ invariant. 
The restriction of those equations to~$\Delta_\bt$ gives rise to coupled cell systems consistent with the quotient hypernetwork~$\Hc/\bt$ in Figure~\ref{hyp_quo} with cells evolving according to
\begin{align*}
\dot{x}_1 &= f(x_1),\\
\dot{x}_2 &= f(x_2),\\
\dot{x}_4 &= f(x_4) + Q_3(x_4, x_1, x_1, x_2) + 2 Q_4(x_4, x_1, x_2, x_2)\, .
\end{align*}
\hfill $\Diamond$
\end{exam}

\begin{rem}
Due to Lemma~\ref{lem:ident_split}, the results in Theorem~\ref{thm:dyn_quot}, concerning the restriction of the dynamcs to the synchrony subspace $\Delta_\bt$, apply to every hypernetwork obtained from the quotient hypernetwork $\Qc$ by one (or more) purely structural combining {hyperedge} operations, since they are identical as coupled cell systems.
\hfill $\Diamond$
\end{rem}

\subsection{Robust synchrony subspaces via the incidence digraph} 
In the previous section, we established the notion of a balanced relation for a hypernetwork~$\Hc$. At the same time, as outlined in Section~\ref{Sec:hypergraph}, the hypergraph~$\Hc$ can also be represented as a bipartite graph~$\Dc_{\cH}$ (cf.~Definition~\ref{def:bidigraph}) for which traditional notions of balanced relations and synchrony subspaces apply. How do the hypergraph synchrony subspaces of~$\Hc$ and the synchrony subspaces of $\Dc_\Hc$ relate? 
We now show how to find the set (lattice) of the synchrony subspaces for an hypernetwork~$(\cH,W)$ using the associated incidence digraph~$\Dc_{\cH}$ of~$\cH$ with nodes given by the nodes and hyperedges of $\cH$. More concretely, we prove that the synchrony subspaces for the hypernetwork $(\cH, W)$ can be obtained by a `projection' of  the synchrony subspaces of  the adjacency matrix of the incidence digraph $\Dc_{\cH}$.

We start by relating the set of balanced equivalence relations on the set of cells of an hypernetwork $(\cH,W)$ with those on the set of the nodes of its incidence digraph~$\Dc_{\cH}$. 

\begin{Def} \label{def:equiv_rel}
Let $(\Hc,W)$ be an hypernetwork with cells~$C=C(\Hc)$ and hyperedges~$E=E(\Hc)$, and let~$\Dc_{\Hc}$ be the corresponding incidence digraph with nodes $C(\Dc_{\Hc}) = C(\Hc) \cup E(\Hc)$.

\noindent (i) Given an equivalence relation $\bowtie$ on $C$ for $\cH$, we define the equivalence relation $\bowtie_\Dc$ on $C \cup E$ for $\Dc_{\cH}$ in the following way: 
\begin{itemize}
\item[(a)] $c \bowtie_\Dc c'$ iff $c \bowtie c'$, for $c,c' \in C$;
\item[(b)] $e_i \bowtie_\Dc e_j$ iff $\overrightarrow{m} (e_i) = \overrightarrow{m} (e_j)$, for $e_i, e_j \in E$. 
\end{itemize}
with $\overrightarrow{m} (e)$ the pattern determined by $\bowtie$ on the hyperedge $e$.

\noindent (ii) Given an equivalence relation $\tbt$ on $C \cup E$ for $\Dc_{\cH}$, we define the equivalence relation $\tbt_{\cH}$ on $C$ for $\cH$ through
\begin{itemize}
\item[(a)] $c \tbowtie_{\cH} c'$ iff $c\tbowtie c'$, for $c,c' \in C$.
\end{itemize}
We say that the relation~$\tbt_{\cH}$  is the {\em projection} of the relation $\tbt$.
\hfill $\Diamond$
\end{Def}

Given the definition above, we have then the following result.

\begin{thm} \label{thm:bij_bal}
Let $(\cH,W)$ be an hypernetwork and $\Dc_{\cH}$ the corresponding incidence digraph. We have: \\
(i) For each balanced equivalence relation $\bowtie$ for $(\cH,W)$ the corresponding equivalence relation $\bowtie_\Dc$ for~$\Dc_{\cH}$ is also balanced; \\
(ii) Each balanced equivalence relation~$\tbt$ for $\Dc_{\cH}$ projects into a balanced equivalence relation $\tbt_{\cH}$  for $(\cH,W)$.
\end{thm}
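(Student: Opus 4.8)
The plan is to verify the combinatorial balance conditions of Definition~\ref{def:bal_subhyper} directly, treating the two node types of~$\Dc_{\cH}$ separately and exploiting its bipartite structure throughout: in~$\Dc_{\cH}$ every cell-node~$c$ receives edges only from hyperedge-nodes (an edge from~$e$ to~$c$ with weight~$w_e$ exactly when $e\in\BS(c)$), and every hyperedge-node~$e$ receives edges only from cell-nodes (an edge from~$v$ to~$e$ with weight $m^e_v$, the multiplicity of~$v$ in $T(e)$, exactly when $v\in T(e)$). I take ``balanced for~$\Dc_{\cH}$'' to mean that the associated polydiagonal is left invariant by the adjacency matrix $A_{\Dc_{\cH}}$, i.e.\ the weighted row sums of $A_{\Dc_{\cH}}$ over each class agree for any two related nodes.

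For~(i), fix a balanced~$\bowtie$ for $(\cH,W)$ and form $\bowtie_\Dc$ as in Definition~\ref{def:equiv_rel}(i). Since $\bowtie_\Dc$ keeps cells with cells and hyperedges with hyperedges, the classes of~$\Dc_{\cH}$ split into cell-classes and hyperedge-classes, and it suffices to check the weighted row-sum condition for the two kinds of related pairs. For hyperedges $e_i\bowtie_\Dc e_j$ we have $\overrightarrow{m}(e_i)=\overrightarrow{m}(e_j)$ by definition; the $l$-th entry of $\overrightarrow{m}(e)$ equals $\sum_{v\in\overline{c}_l} m^e_v$, which is precisely the weight of edges entering~$e$ in $\Dc_{\cH}$ from the cell-class~$\overline{c}_l$ (edges from hyperedge-classes into~$e$ carry weight~$0$). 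Hence $e_i$ and $e_j$ receive equal input from every class. For cells $c\bowtie_\Dc c'$ the hyperedge-classes of~$\Dc_{\cH}$ are exactly the distinct patterns, and the weight entering~$c$ from the class of pattern~$\overrightarrow{m}$ is $\sum_{e\in\BS(c),\,\overrightarrow{m}(e)=\overrightarrow{m}} w_e$, i.e.\ the pattern weight of~$\overrightarrow{m}$ on~$c$; balance of~$\bowtie$ makes this agree on~$c$ and~$c'$ (and input from cell-classes into a cell is again~$0$). This verifies the balance of $\bowtie_\Dc$.

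For~(ii), let $\tbt$ be balanced for~$\Dc_{\cH}$ and let $\tbt_{\cH}$ be its restriction to~$C$. The crucial point is that, although $\tbt$ need not respect the bipartition $C\cup E$, the one-sided flow of edges forces only the relevant part of each class to matter. Evaluating balance of~$\tbt$ at a hyperedge-node, only the cell-part $Y\cap C$ of a class~$Y$ contributes; reading the resulting identities over the sets $Y\cap C$ (which are exactly the $\tbt_{\cH}$-classes) shows that $e_i\tbt e_j$ implies $\overrightarrow{m}(e_i)=\overrightarrow{m}(e_j)$ computed with respect to $\tbt_{\cH}$. Thus every $\tbt$-class of hyperedges carries a single well-defined $\tbt_{\cH}$-pattern. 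Evaluating balance of~$\tbt$ at two related cells $c\tbt c'$, only the hyperedge-part $Y\cap E$ contributes, giving $\sum_{e\in\BS(c)\cap Y} w_e=\sum_{e\in\BS(c')\cap Y} w_e$ for every class~$Y$. Summing these identities over all classes~$Y$ sharing a fixed pattern~$\overrightarrow{m}$ collapses the two sides to the pattern weights of~$\overrightarrow{m}$ on~$c$ and on~$c'$ respectively, so these pattern weights agree; under the standing convention of positive hyperedge weights the sets of patterns occurring in $\BS(c)$ and $\BS(c')$ then also coincide, and summing pattern weights over a fixed cardinality~$k$ yields $\Br(c)=\Br(c')$ together with condition~(iib) of Definition~\ref{def:InputEquiv}. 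This is exactly the statement that $\tbt_{\cH}$ is balanced for $(\cH,W)$.

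I expect the main obstacle to be the second part, specifically that $\tbt$ is permitted to merge cells with hyperedges: the argument must not presuppose that $\tbt$ respects the bipartition, and must instead deduce from the structure of $\Dc_{\cH}$ that balance at a node only ``sees'' the opposite side of each class. Once this reduction is in place, the identification of each $\tbt$-class of hyperedges with a single $\tbt_{\cH}$-pattern, and the regrouping of the weighted row-sum identities by pattern, are routine bookkeeping.
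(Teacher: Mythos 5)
Your proposal is correct and follows essentially the same route as the paper's proof: in both directions one matches the weighted row-sum (balance) condition at cell-nodes of $\Dc_{\cH}$ with the pattern-weight condition on backward stars in $\cH$, and the condition at hyperedge-nodes with equality of patterns $\overrightarrow{m}(e)$. The only differences are refinements rather than a new approach --- you make explicit that a $\tbt$-class may a priori mix cells with hyperedges (handled by noting each side only receives input from the other), and you flag the positivity of weights needed to recover the ``same pattern sets'' clause from equal pattern weights, a point the paper's own proof glosses over.
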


\begin{proof}
Let $(\cH,W)$ be an hypernetwork with cells~$C$ and hyperedges~$E$, and let~$\Dc_{\cH}$ be the associated incidence digraph with nodes $C \cup E$.

(i) Let $\bowtie$ be a balanced equivalence relation on the set of cells $C$ of the hypernetwork $(\cH,W)$ and consider the corresponding equivalence relation $\bowtie_\Dc$ on the set of nodes $C \cup E$ of the bipartite network $\Dc_{\cH}$, as in Definition~\ref{def:equiv_rel}. By definition, two nodes $e_i, e_j \in E$ of $\Dc_{\cH}$ such that $e_i \bowtie_\Dc e_j$ correspond to two hyperedges $e_i$ and $e_j$ of $\cH$ that have the same pattern determined by $\bowtie$. Also, note that  the input set of a node $e_i \in E$ of~$\Dc_{\cH}$ corresponds to the tail~$T(e_i)$ of the hyperedge~$e_i$ in~$\cH$.
Thus: (a) for every two nodes $e_i, e_j \in E$ of $\Dc_{\cH}$ such that $e_i \bowtie_\Dc e_j$ there is a bijection between their input sets in $\Dc_{\cH}$ preserving the $\bowtie_\Dc$-classes. 
Consider now two nodes $c, d \in C$ of $\Dc_{\cH}$ such that $c \bowtie_\Dc d$, and thus with $c \bowtie d$. Then, since $\bowtie$ is balanced, the pattern sets determined by the hyperedges of the sets~$\BS(c)$ and~$\BS(d)$ coincide and each pattern has the same weight on both cells. Note that the input set $I_B(i)$ of a node $i \in C$ of $\Dc_{\cH}$ is given by the backward stars $\BS(i)$ of $i$ in $\cH$. We have then: (b) for any two nodes $c, c' \in C$ such that $c \bowtie_\Dc c'$, for every $\bowtie_\Dc$-class, the sum of the weights of the edges in $\Dc_{\cH}$ directed to nodes~$c$ and~$c'$, from the nodes in that $\bowtie_\Dc$-class, is the same. From~(a) and~(b), it follows that the equivalence relation~$\bowtie_\Dc$, as defined in Definition~\ref{def:equiv_rel}, is balanced. Thus, we have shown that, for every balanced equivalence relation~$\bowtie$ for the hypernetwork $(\cH,W)$, we can associate a balanced equivalence relation~$\bowtie_\Dc$ for the incidence digraph~$\Dc_{\cH}$.

(ii) Let $\tbt$ be a balanced equivalence relation on the set of nodes $C \cup E$ for the incidence digraph $\Dc_{\cH}$ and consider the equivalence relation $\tbt_{\cH}$ that is a projection on the set of cells~$C$ of~$\cH$ satisfying $c \tbowtie_{\cH} c'$ if and only if $c \tbowtie c'$. Since $\tbt$ is balanced, for $c, c' \in C$, if $c \tbowtie c'$ then for every $\tbt$-class, the sum of the weights of the edges in $\Dc_{\cH}$ directed to nodes~$c$ and~$c'$, from the nodes in that $\tbt$-class, is the same. Moreover, for $e_i, e_j \in E$, if $e_i\tbowtie e_j$ then there is a bijection between their input sets, $I(e_i)$ and $I(e_j)$, in~$\Dc_{\cH}$ that preserves the $\tbt$-classes. Thus, for the hyperedges $e_i$ and $e_j$ in $\cH$, we have $\overrightarrow{m} (e_i) = \overrightarrow{m} (e_j)$. 
If for two cells~$c$ and~$c'$ of~$C$ we have $c\tbowtie c'$ then for every $\tbt$-class~$K$ we have $I(c) \cap K \ne \emptyset$ if and only if $I(d) \cap K \ne \emptyset$. Thus, in terms of~$\cH$, we have that~$\BS(c)$ has hyperedges with a certain pattern $\overrightarrow{m} (e)$ if and only if $\BS(c')$ also has hyperedges with that pattern $\overrightarrow{m} (e)$. Moreover, as for every $\tbt$-class~$K$ the sum of weights of the edges in $I(c) \cap K \ne \emptyset$ equals the sum of weights of the edges in $I(c') \cap K \ne \emptyset$, we have that the weight 
of each pattern $\overrightarrow{m}(e)$ on the cell~$c$ equals the weight 
of that pattern on the cell~$c'$. Thus, $\tbt_{\cH}$ is balanced. We conclude then that each balanced equivalence relation~$\tbt$ for~$\Dc_{\cH}$ projects into a balanced equivalence relation~$\tbt_{\cH}$ for~$\cH$. 
\end{proof}

There may not be a bijection between the set of balanced equivalence relations for an hypernetwork $(\cH,W)$ and the set of balanced equivalence relations for its incidence digraph $\Dc_{\cH}$. In fact, from  Definition~\ref{def:equiv_rel} and Theorem~\ref{thm:bij_bal}, it follows that if two balanced relations~$\bowtie^1$ and~$\bowtie^2$ for~$\cH$ are not the same then the associated balanced relations~$\bowtie^1_\Dc$ and~$\bowtie^2_\Dc$ for~$\Dc_{\cH}$ are also not the same. Nonetheless, two different balanced relations~$\tbt^1$ and~$\tbt^2$ for~$\Dc_{\cH}$ can project into the same balanced relation $\tbt^1_{\cH} = \tbt^2_{\cH}$ for $\cH$.

\begin{exam} \label{ex:not_bij}
Consider again the directed hypernetwork $\cH$ of Example~\ref{ex:first} on the left of Figure~\ref{figure_example1}. 
The hyperedges of~$\cH$ are 
\begin{align*}
e_1 &= \left( \{2,5\}, \{1\} \right), & e_2 &= \left( \{2\}, \{2,4\} \right), & e_3 &= \left( \{1,2\}, \{6\} \right), \\
e_4 &= \left( \{4,6\}, \{3,5\} \right),& e_5 &= \left( \{4\}, \{3\} \right).&
\end{align*} 
The input equivalence relation for the hypernetwork~$\cH$ is 
\[\simIo = \{ \{1,5,6\},\, \{ 2,4\},\, \{ 3\}\}\]
and the incidence digraph~$\Dc_{\cH}$ for~$\cH$ is shown in   Figure~\ref{B_figure_example1}. 
 
The equivalence relations 
\begin{align*}
\tbt^1 &= \{ \{1, 5, 6\},\ \{2,4\}, \{3\},\ \{e_1, e_3, e_4\}, \ \{e_2\}, \  \{e_5\} \}
\intertext{
and 
}
\tbt^2 &= \{ \{1, 5, 6\},\ \{2,4\}, \{3\},\ \{e_1, e_3, e_4\}, \ \{e_2, e_5\} \}
\end{align*}
for $\Dc_{\cH}$ are balanced and project into the same balanced equivalence relation
\[
\bt = \tbt^1_{\cH} = \tbt^2_{\cH} = \{ \{1, 5, 6\},\ \{2,4\}, \{3\} \}
\]
for~$\cH$. \hfill $\Diamond $
\end{exam}

Nevertheless, it also follows from Definition~\ref{def:equiv_rel} and Theorem~\ref{thm:bij_bal} that the set of balanced equivalence relations for a hypernetwork $(\cH, W)$ can be otained by the projection of the balanced equivalence relations for its incidence digraph $\Dc_{\cH}$.

Let $\Hc=(C,E)$ be a hypergraph with nodes/cells~$C$ and edges~$E$. The balanced relations of a hypernetwork~$(\Hc, W)$ and the digraph~$\Dc_{\cH}=(C(\Dc_{\cH}), E(\Dc_{\cH}))$ associated with the hypergraph~$\Hc$ are related as stated in Theorem~\ref{thm:bij_bal}. How do the synchrony subspaces relate? 
For~$\Dc_{\cH}$ consider cells $C(\Dc_{\cH}) = C\cup E$ equipped with phase space~$\R$; since there are two ``types'' of cells for $\Dc_\cH$, we write~$x_c$ for the state of $c\in C$ and~$x_e$ for the state of $e\in E$. For an equivalence relation $\tbt$ on~$C(\Dc_{\cH})$ for~$\Dc_{\cH}$,  
consider the polydiagonal subspace 
\begin{align*}
\Delta_{\tbt} &= \sset{x_c = x_{c'} \mbox{ if }  c \tbowtie c'
, x_e = x_{e'} \mbox{ if } e \tbowtie e' }.
\intertext{ 
For the projected equivalence relation~$\tbt_{\cH}$ on~$C$ for~$\cH$ obtained from~$\tbt$ consider the usual polydiagonal subspace}
\Delta_{\tbt_{\cH}} &= \sset{x_c = x_{c'} \mbox{ if }  c\tbowtie_{\cH} c' }.
\end{align*}
In terms of synchrony subspaces for the hypernetwork~$(\cH, W)$ we have then the following result.

In terms of synchrony subspaces for the hypernetwork $(\cH, W)$ we have then that they can be obtained via the `projection' of the synchrony subspaces of the adjacency matrix of the incidence digraph $\Dc_{\cH}$.

\begin{thm}
Let  $(\cH, W)$ be a weighted directed hypernetwork and $\Dc_{\cH}$ the associated incidence digraph. Let $\tbt_{\cH}$ and $\tbt$ be equivalence relations and $\Delta_{\tbt_{\cH}}$ and $\Delta_{\tbt}$ polydiagonal subspaces, as defined above.
A polydiagonal subspace $\Delta$ is a synchrony subspace for the hypernetwork $(\cH, W)$ if and only if $\Delta = \Delta_{\tbt_{\cH}}$ with $\Delta_{\tbt}$ a synchrony subspace of the adjacency matrix of the digraph $\Dc_{\cH}$.
\end{thm}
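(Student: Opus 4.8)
The plan is to reduce the statement entirely to the correspondence between balanced equivalence relations and synchrony subspaces, which is already available on both sides of the construction. Three ingredients are needed: (1)~Theorem~\ref{thm:sync_bal}, which says that $\Delta = \Delta_\bt$ is a synchrony subspace of $(\cH,W)$ if and only if $\bowtie$ is balanced; (2)~Theorem~\ref{thm:bij_bal}, which transports balanced relations between $(\cH,W)$ and $\Dc_{\cH}$ via $\bowtie\mapsto\bowtie_\Dc$ and $\tbt\mapsto\tbt_{\cH}$; and (3)~the standard linear-algebra characterization for a single weighted digraph: a polydiagonal $\Delta_{\tbt}$ is left invariant by the adjacency matrix $A_{\Dc_{\cH}}$ (i.e.\ is a synchrony subspace of $A_{\Dc_{\cH}}$ in the sense defined above) if and only if $\tbt$ is balanced for $\Dc_{\cH}$. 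Fact~(3) is the observation that, writing $x\in\Delta_{\tbt}$ as a constant value $x_K$ on each $\tbt$-class $K$, the requirement $(A_{\Dc_{\cH}}x)_i=(A_{\Dc_{\cH}}x)_{i'}$ for all such $x$ whenever $i\tbowtie i'$ is exactly the condition that the class-wise weight sums of incoming edges into $i$ and $i'$ agree, which is the definition of balanced for the weighted digraph $\Dc_{\cH}$; I would either cite this from the weighted-network references or record it in one line.

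For the ``only if'' direction I would start from a synchrony subspace $\Delta$ of $(\cH,W)$. By Theorem~\ref{thm:sync_bal} we have $\Delta=\Delta_\bt$ for some balanced relation $\bowtie$ on $C$. Setting $\tbt:=\bowtie_\Dc$, Theorem~\ref{thm:bij_bal}(i) gives that $\tbt$ is balanced for $\Dc_{\cH}$, so by Fact~(3) the polydiagonal $\Delta_{\tbt}$ is a synchrony subspace of $A_{\Dc_{\cH}}$. It then remains to identify the projection: by Definition~\ref{def:equiv_rel}, for $c,c'\in C$ we have $c\,(\bowtie_\Dc)_{\cH}\,c'$ iff $c\bowtie_\Dc c'$ iff $c\bowtie c'$, hence $(\bowtie_\Dc)_{\cH}=\bowtie$ and therefore $\Delta_{\tbt_{\cH}}=\Delta_\bt=\Delta$. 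This exhibits the required $\tbt$.

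For the ``if'' direction I would take any $\tbt$ on $C\cup E$ such that $\Delta_{\tbt}$ is a synchrony subspace of $A_{\Dc_{\cH}}$, and set $\Delta=\Delta_{\tbt_{\cH}}$. By Fact~(3), $\tbt$ is balanced for $\Dc_{\cH}$; by Theorem~\ref{thm:bij_bal}(ii) the projection $\tbt_{\cH}$ is balanced for $(\cH,W)$; and by Theorem~\ref{thm:sync_bal} the polydiagonal $\Delta_{\tbt_{\cH}}=\Delta$ is a synchrony subspace of $(\cH,W)$. This closes the equivalence.

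The genuine mathematical content sits in Theorems~\ref{thm:sync_bal} and~\ref{thm:bij_bal}, so the argument is mostly bookkeeping; the one point that deserves care is that the correspondence $\tbt\mapsto\tbt_{\cH}$ is not injective (as Example~\ref{ex:not_bij} shows), so the statement must be read with $\tbt$ existentially quantified. The two directions must therefore be handled asymmetrically: in ``only if'' I must \emph{construct} one suitable $\tbt$, namely $\bowtie_\Dc$, relying on the identity $(\bowtie_\Dc)_{\cH}=\bowtie$; in ``if'' I may start from an arbitrary balanced $\tbt$ and only need that its projection is balanced, for which the failure of injectivity is harmless. The main (minor) obstacle is thus simply to state Fact~(3) cleanly for the single adjacency matrix $A_{\Dc_{\cH}}$ and to verify the projection identity, rather than any essentially new argument.
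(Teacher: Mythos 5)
Your proposal is correct and follows essentially the same route as the paper: it combines Theorem~\ref{thm:sync_bal}, Theorem~\ref{thm:bij_bal}, and the standard fact that a polydiagonal is invariant under the adjacency matrix of a weighted digraph if and only if the corresponding relation is balanced. Your version is in fact more carefully written than the paper's, since you make explicit the identity $(\bowtie_\Dc)_{\cH}=\bowtie$ needed for the ``only if'' direction and the existential quantification over $\tbt$, both of which the paper's terse proof leaves implicit.
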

\begin{proof}

Let $\Delta_{\tbt}$ be the polydiagonal subspace associated with an equivalence relation $\tbt$ for the incidence digraph $\Dc_{\cH}$, as defined above. By the definition of balanced relation, $\Delta_{\tbt}$ is a synchrony subspace of (is left invariant by) the adjacency matrix of $\Dc_{\cH}$ if and only if $\tbt$ is balanced. By Theorem~\ref{thm:bij_bal}, the balanced equivalence relations for the hypernetwork $(\cH, W)$ are the projection $\tbt_{\cH}$ of the balanced equivalence relations $\tbt$ for the incidence digraph $\Dc_{\cH}$.
Moreover, by Theorem~\ref{thm:sync_bal}, $\tbt_{\cH}$ is balanced  if and only if the polydiagonal subspace $\Delta_{\tbt_{\cH}}$, as defined above, is a synchony subspace for $(\cH, W)$. The result then follows.
\end{proof}

\begin{rem}
A relevant consequence of the results in this section is that the existing results regarding balanced relations and synchrony spaces for networks can be used to obtain analogous results for hypernetworks. For example, the work of Aldis~\cite{A08} with the description of a polynomial-time algorithm to compute the coarsest balanced equivalence relation of a graph and the work of Aguiar and Dias~\cite{AD14} describing an algorithm to compute the lattice of synchrony subspaces for the adjacency matrix of a network.
\hfill $\Diamond$
\end{rem}

\begin{exam}
Consider again the hypernetwork~$\cH$ on the left of Figure~\ref{figure_example1} of Examples~\ref{ex:first} and~\ref{ex:not_bij}. The  admissible equations  are 
\begin{align*}
\dot{x}_1 &= f(x_1) + Q_2(x_1;x_5,x_2)\\
\dot{x}_2 &= f(x_2) + Q_1(x_2;x_2)\\
\dot{x}_3 &= f(x_3) + Q_1(x_3;x_4) + Q_2(x_3;x_4,x_6)\\
\dot{x}_4 &= f(x_4) + Q_1(x_4;x_2)\\
\dot{x}_5 &= f(x_5) + Q_2(x_5;x_4,x_6) \\
\dot{x}_6 &= f(x_6) + Q_2(x_6;x_1,x_2)
\end{align*}
where  $f:\, V \to V$, $Q_1:\, V^2 \to V$,  $Q_2:\, V^3 \to V$ are smooth functions and ~$Q_2$ is symmetric under permutation of the last two coordinates.
Looking at the equations, we can conclude that the set of nontrivial  synchrony subspaces for the hypernetwork~$\cH$ is given by
\[
\sset{
\Delta_1 = \set{x}{x_2 = x_4},\ 
\Delta_2 = \set{x}{x_1 = x_5 = x_6, x_2 = x_4}
}\, .
\]

Now, let us see how we can get this set of 
synchrony subspaces using the incidence digraph $\Dc_{\cH}$ associated with $\cH$. The digraph $\Dc_{\cH}$ is represented in Figure~\ref{B_figure_example1} 
and its adjacency matrix given by
\[
A_{\small{\Dc_{\cH}}} =
\left[
\begin{array}{c|c}
0_{6 \times 6} & W \\
\hline
T & 0_{5 \times 5}
\end{array}
\right],
\]
with
$$
W = 
\left[
\begin{array}{ccccc}
1 & 0& 0 & 0 & 0  \\
0 & 1& 0 & 0 & 0  \\
0 & 0& 0 & 1 & 1  \\
0 & 1& 0 & 0 & 0  \\
0 & 0& 0 & 1 & 0  \\
0 & 0& 1 & 0 & 0  
\end{array}
\right]
\qquad
\mbox{ and }
\qquad
T = 
\left[
\begin{array}{cccccc}
0 & 1 & 0 & 0 & 1 & 0 \\
0 & 1 & 0 & 0 & 0 & 0 \\
1 & 1 & 0 & 0 & 0 & 0 \\
0 & 0 & 0 & 1 & 0 & 1 \\
0 & 0 & 0 & 1 & 0 & 0 
\end{array}
\right]\, .
$$
For an eigenvalue $\lambda$ of a matrix let~$W_\lambda$ denote the associated (generalized) eigenspace. Moreover, write $\langle v_1, \dotsc, v_k\rangle$ for the span of vectors $v_1, \dotsc, v_k$.
The eigenvalues of the matrix $A_{\small{\Dc_{\cH}}}$ are $\lambda\in\sset{0, \pm 1,\pm 0.5\pm i 0.866}$; the algebraic multiplicity of $\lambda = 0$ is three and that of $\lambda=\pm 1$ is two. The corresponding (generalized) eigenspaces are
\begin{align*}
&W_0 = \langle v_1, v_2,v_3 \rangle, & &W_{-0.5\pm i 0.866} = \langle v_8, v_9\rangle,\\
&W_{-1} = \langle v_4, v_5\rangle, & &W_{0.5 \pm i 0.866} = \langle v_{10}, v_{11}\rangle,\\
&W_{1} = \langle v_6, v_7\rangle,
\end{align*}
where
\begin{align*}
v_1 &= (0,0,1,0,0,0,0,0,0,0,0) & v_2 &= (0,0,1,0,0,0,0,0,0,0,1) \\
v_3 &= (0,0,1,1,0,-1,0,0,0,0,1)  & v_4 &= (1,0,1,0,1,1,-1,0,-1,-1,0) \\
v_5 &= (0,-2,-2,-2,0,0,1,2,1,1,2) & v_6 &= (1,0,1,0,1,1,1,0,1,1,0) \\
v_7 &= (0,2,2,2,0,0,1,2,1,1,2)
\end{align*}
and
\begin{align*}
v_8, v_9 &\in \{ (a,0,b,0,b,c,c,0,b,a,0):\ a \ne b \ne c \in \R \}\\
v_{10}, v_{11} &\in \{ (a,0,b,0,b,c,-c,0,-b,-a,0):\ a \ne b \ne c \in \R \}
\end{align*}

The polydiagonal subspaces given by equalities of cell coordinates and equalities of edge coordinates that are invariant by the adjacency matrix $A_{\small{\Dc_{\cH}}}$ are
\begin{align*}
\tilde \Delta _1 &= \{  x_2 = x_4  \} \\
 &= \langle v_1,v_2\rangle \oplus W_{-1} \oplus W_{1} \oplus W_{-0.5\pm i 0.866} \oplus W_{0.5\pm i 0.866},\\
\tilde \Delta _2 & =  \{x_1 = x_5 = x_6, x_2 = x_4; x_{e_1} = x_{e_3} = x_{e_4} \} \\
& = \langle v_1,v_2\rangle \oplus W_{-1} \oplus W_{1}, \\
\tilde \Delta _3 & =  \{x_1 = x_5=x_6, x_2 = x_4; x_{e_1} = x_{e_3} = x_{e_4}, x_{e_2} = x_{e_5} \} \\
& = \langle v_1\rangle \oplus W_{-1} \oplus W_{1}.
\end{align*}
These now relate to the synchrony spaces of~$\cH$: We have that $\tilde \Delta _1$ `projects into' the synchrony subspace~$\Delta _1$ of~$\cH$ and~$\tilde \Delta _2$ and~$\tilde \Delta _3$ `project into' the synchrony subspace $\Delta _2$ of~$\cH$.
\hfill $\Diamond $
\end{exam}

We stress that our results are valid for both unweighted and weigthed hypernetworks; the previous example can be seen as a hypernetwork where all weights are equal to one. 

\begin{rem} 
Note that there is no need to consider more than one adjacency matrix for the incidence digraph $\Dc_{\cH}$ in order to separate the hyperedges with tails with different multiplicites since those hyperedges as nodes in $\Dc_{\cH}$ cannot synchronize given that the row sum of the corresponding rows in the submatrix $T$ of adjacency matrix $A_{\small{\Dc_{\cH}}}$ is different. 
\hfill $\Diamond$
\end{rem}

\section{Linearization and stability---a case study} 
\label{sec:examples}

In the previous sections, we considered the question what type of synchrony patterns can robustly exist for coupled cell hypernetworks and how they depend on the properties of the underlying hypergraph. We now consider linear stability of solutions on synchrony subspaces; asymptotic stability is crucial to actually observe synchrony patterns in real-world systems. We show that in a class of examples that linear stability may or may not depend on higher-order interactions.

Here we consider weighted directed hypernetworks~$(\Hc,W)$ with~$n$ nodes and directed hyperedges of the two types shown in Figure~\ref{fig:two_types}: There is an edge between nodes $i,j$ with weight~$K_{ij}$ and for each pair of nodes $k,l$ in $\{1, \dotsc, n\}$ there is a hyperedge $\left(\{k,l\}, \{ i\} \right)$ for $i=1, \dotsc, n$ with weight~$H_{kl}$. Note that we do not assume any relationship between the weights~$K_{ij}$ of the pairwise interactions and the weights~$H_{kl}$ between the nonpairwise interactions. For the remainder of this section, we fix a hypernetwork coupling through  the coupling functions 
\begin{align*}
Q_1(p_i; p_j) &= p_i p_j; & Q_2 (p_i; p_k, p_l) &= p_i p_k p_l. 
\end{align*}

\begin{figure}
\begin{center}
\begin{tikzpicture}
 [scale=.15,auto=left, node distance=1.5cm, 
 ]
 \node[fill=white,style={circle,draw}] (n1) at (4,6)  {\small{$k$}};
 \node[fill=white,style={circle,draw}] (n2) at (4,-6) {\small{$l$}};
 \node[fill=white,style={circle,draw}] (n3) at (24,0) {\small{$i$}};
 
 \node [fill=black,style={circle,scale=0.1}] (e1) at (14,0) { };
 \node [fill=black,style={circle,scale=0.1}] (e11) at (16,0) { };
 \node[fill=white,style={circle,draw}] (n4) at (44,0) {\small{$j$}};
  \node[fill=white,style={circle,draw}] (n5) at (58,0) {\small{$i$}};
 \path
        (n1) [-]  edge[bend right=20,thick]  node { } (e1)
        (n2) [-]  edge[bend left=20,thick]  node { } (e1)
         (e1) [->] edge[thick] node {}   (e11)   
        (e11) [-] edge[bend right=0,thick]      node  {{\tiny $H_{kl}$}}  (n3)
         (n4) [->] edge[bend right=0,thick]      node  {{\tiny $K_{ij}$}}  (n5); 
 \end{tikzpicture} 
  \end{center}
 \caption{(Left) A directed hyperedge $e_{kl} = \left(  \{k,l\}, \{ i\} \right)$ with cardinality two tail set and weight $H_{kl}$. (Right) A directed edge $\left(  \{j\}, \{ i\} \right)$ with weight~$K_{ij}$.}\label{fig:two_types}
\end{figure}
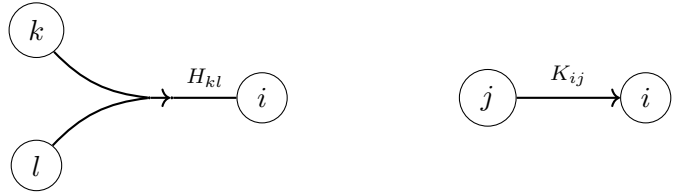

\newcommand{\tr}{\mathsf{T}}

The choice of coupling functions now leads to an admissible coupled cell system for the hypernetwork coupling given by
\begin{align}\dot p_i & =  \left( \sum_{j=1}^{n} K_{ij} p_j - \sum_{k=1}^{n} \sum_{l=1}^{n} H_{kl} p_k p_l \right)p_i
\label{eq:general_rd}
\end{align}
for $i=1, \dotsc, n$ subject to $\sum_{i=1}^{n} p_i = 1$ and $0 \leq p_i \leq 1$.
For a matrix~$A$ let $A^\tr$ denote its transpose.
If we write $K = [K_{ij}]$ and $H = [H_{kl}]$ for the $n \times n$ weight matrices, the system~\eqref{eq:general_rd} can be written in matrix form as
\begin{align}
\dot p_i & = \left( \left( Kp\right)_i - p^\tr H p\right)p_i
\label{eq:general_vdif_rd}
\end{align}
for $i=1, \dotsc, n$.

\begin{rem}
Allesina and Levine~\cite[Supporting Information]{AL11} considered the replicator equations with~$n$ species (see also Hofbauer and Sigmund~\cite{HS98}), that is, equations~\eqref{eq:general_rd} with $K=H$ and~$K$ is skew-symmetric. Here, $K_{ij}$ represents the effect of species~$j$ on the growth rate of species~$i$. The dynamics of species~$i$ is determined by the \emph{fitness of species~$i$} given by $\sum_{j=1}^{n} K_{ij} p_j$ and the \emph{average fitness for the system} $\sum_{k=1}^{n} \sum_{l=1}^{n} K_{kl} p_k p_l$; this ensures that no species can increase in density without other species decreasing. The condition $\sum_{i=1}^{n} p_i = 1$ ensures that total abundance conservation is maintained for all time. In this model
terms of the form $K_{ij}p_ip_j$ represent \emph{pairwise} interactions between the species~$i$ and~$j$ and $\sum_{k=1}^{n} \sum_{l=1}^{n} K_{kl} p_k p_l$ represents an average of \emph{nonpairwise} interactions between all the species.

In~\cite{CT02}, it is shown that for a skew-symmetric $n \times n$ matrix $K$ is skew symmetric the system has a unique equilibrium solution~$p$, which is linearly neutrally stable. For a skew-symmetric matrix~$K$, the quadratic form $w \mapsto w^\tr K w$ is null and with $K=H$ the system~\eqref{eq:general_rd} reduces to
\begin{align}
\dot p_i & = \left( Kp\right)_i p_i
\label{eq:skew_vrd}
\end{align}
for $i=1, \dotsc, n$.
Chawanya and Tokita~\cite{CT02} reports that the condition of skew symmetry of~$K$ (on the interactions between the species) can be used to yield and stabilize a large complex ecosystem.  The antisymmetry model assumption is based on the fact that many species interact with each other in prey-predator or parasitic relationships. 
\hfill $\Diamond$
\end{rem}

We can make the following two observations.
 
\begin{lemma} \label{lem:syn_spa}
(i) The synchrony spaces of~\eqref{eq:general_vdif_rd} are the synchrony spaces of~$K$.\\
(ii) In case $H$ is a \emph{skew symmetric} matrix, that is, $H^\tr = -H$, then the quadratic form $p \mapsto p^\tr H p$ vanishes and 
equations~\eqref{eq:general_vdif_rd} become
\begin{align}
\dot p_i & = \left( \left( Kp\right)_i \right) p_i
\label{eq:skew_general_vdif_rd} 
\end{align}
for $i=1, \dotsc, n$.
\end{lemma}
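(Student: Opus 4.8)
The plan is to dispose of part~(ii) at once and then recognize part~(i) as the assertion that flow-invariance of a polydiagonal subspace for~\eqref{eq:general_vdif_rd} is equivalent to invariance under the linear map~$K$. For~(ii), note that $p^\tr H p$ is a $1\times 1$ matrix and hence equals its own transpose; when $H^\tr=-H$ this gives $p^\tr H p = (p^\tr H p)^\tr = p^\tr H^\tr p = -p^\tr H p$, so $2\,p^\tr H p = 0$ and the quadratic form vanishes identically. Substituting $p^\tr H p=0$ into~\eqref{eq:general_vdif_rd} yields~\eqref{eq:skew_general_vdif_rd}, which is all that is required.

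For part~(i), the key observation is that $\phi(p):=p^\tr H p$ is a single scalar, the \emph{same} for every cell index. Let $\bowtie$ be an equivalence relation with polydiagonal subspace $\Delta_\bt$. Then $\Delta_\bt$ is left invariant by the flow of~\eqref{eq:general_vdif_rd} precisely when the vector field is tangent to it, i.e.\ when $\dot p_i=\dot p_j$ for every $p\in\Delta_\bt$ and every pair $i\bowtie j$. For such $p$ one has $p_i=p_j$, so the common term $\phi(p)$ cancels and
\[
\dot p_i-\dot p_j=\bigl((Kp)_i-\phi(p)\bigr)p_i-\bigl((Kp)_j-\phi(p)\bigr)p_j=p_i\bigl((Kp)_i-(Kp)_j\bigr).
\]
Since $\Delta_\bt$ is a synchrony space of $K$ exactly when $(Kp)_i=(Kp)_j$ for all $p\in\Delta_\bt$ and all $i\bowtie j$, the forward implication is immediate: if $\Delta_\bt$ is $K$-invariant then the displayed difference is $0$, so $\Delta_\bt$ is flow-invariant.

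The substance of~(i) is the converse, and here lies the only real difficulty. Assuming $\Delta_\bt$ is flow-invariant, the identity above forces $p_i\bigl((Kp)_i-(Kp)_j\bigr)=0$ for all $p\in\Delta_\bt$ and all $i\bowtie j$. Fixing such a pair, the map $p\mapsto (Kp)_i-(Kp)_j$ is a \emph{linear} functional on $\Delta_\bt$, while $p\mapsto p_i$ is not identically zero on $\Delta_\bt$ (it takes the value $1$ on the full-diagonal vector, which lies in $\Delta_\bt$). Hence the set $U=\{p\in\Delta_\bt:\ p_i\neq 0\}$ is open and dense in $\Delta_\bt$, and on $U$ we may divide the vanishing product by $p_i$ to get $(Kp)_i=(Kp)_j$; by continuity of the linear functional this extends from $U$ to all of $\Delta_\bt$, so $\Delta_\bt$ is a synchrony space of $K$. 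The main obstacle is exactly this cancellation of the factor $p_i$: flow-invariance only yields a \emph{product} being zero, and one must use that a coordinate function does not vanish identically on a polydiagonal subspace (equivalently, that its coordinate ring is an integral domain) to conclude that the linear factor $(Kp)_i-(Kp)_j$ vanishes. I would add a one-line remark that the constraint $\sum_i p_i=1$ does not affect the argument, since the identities hold on a relatively open subset of the slice $\Delta_\bt\cap\{\sum_i p_i=1\}$ and extend to it by continuity.
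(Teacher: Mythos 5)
Your proof is correct. The paper itself offers no proof of this lemma---it is introduced only with ``We can make the following two observations''---so there is no argument to compare against; your write-up supplies exactly the details that justify the observation. The two points that genuinely need saying are both present and handled properly: the scalar $\phi(p)=p^\tr H p$ is common to all cell indices and therefore cancels from $\dot p_i-\dot p_j$ on any polydiagonal, which is why $H$ plays no role in part~(i); and in the converse direction the factor $p_i$ must be divided out, which you justify by noting that $p\mapsto p_i$ is a nonzero linear functional on $\Delta_{\bowtie}$ (it equals $1$ on the all-ones vector), so $\{p_i\neq 0\}$ is open and dense and the linear identity $(Kp)_i=(Kp)_j$ extends by continuity. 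Your closing remark about the constraint $\sum_i p_i=1$ is also apt, since the interior point $\tfrac1n(1,\dotsc,1)$ lies in every polydiagonal and the affine slice linearly spans $\Delta_{\bowtie}$.
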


A straightforward calculation leads to:

\begin{lemma}\label{lem:lin_pnot=0} 
Assume $p$ is an equilibrium of~\eqref{eq:general_vdif_rd} with $p_i \not=0$ for $i=1, \dotsc, n$ and let~$J_p$ denote the Jacobian of~\eqref{eq:general_vdif_rd} at~$p$. Then  
\[
(J_{p})_i  =  \left( (K)_i  - p \left( H + H^\tr \right)  \right) p_i 
\]
for $i=1, \dotsc, n$. Here~$(M)_i$ denotes the $i$th row of the matrix~$M$. Note that the matrix $H + H^\tr$ is always symmetric.
\end{lemma}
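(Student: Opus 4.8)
The plan is to compute the Jacobian entrywise directly from the scalar form of the vector field and then exploit the equilibrium hypothesis to discard the diagonal contribution. Write the $i$th component of~\eqref{eq:general_vdif_rd} as $F_i(p) = \bigl((Kp)_i - p^\tr H p\bigr)\,p_i$, so that the Jacobian has entries $(J_p)_{im} = \partial F_i/\partial p_m$ evaluated at the equilibrium~$p$. First I would apply the product rule to $F_i$, treating the bracket $(Kp)_i - p^\tr H p$ and the factor $p_i$ separately, which splits each entry into a term where the bracket is differentiated and a term where $p_i$ is differentiated (producing a Kronecker delta $\delta_{im}$).

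The key calculation is the gradient of the two pieces inside the bracket. Since $(Kp)_i = \sum_j K_{ij}p_j$ is linear, $\partial (Kp)_i/\partial p_m = K_{im}$. For the quadratic form I would compute $\partial (p^\tr H p)/\partial p_m = \sum_{k,l} H_{kl}(\delta_{km}p_l + p_k\delta_{lm}) = (Hp)_m + (H^\tr p)_m = \bigl((H+H^\tr)p\bigr)_m$, which is exactly where the symmetric combination $H+H^\tr$ enters. Combining these via the product rule yields
\[
(J_p)_{im} = \Bigl(K_{im} - \bigl((H+H^\tr)p\bigr)_m\Bigr)p_i + \bigl((Kp)_i - p^\tr H p\bigr)\delta_{im}.
\]

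Finally I would invoke the equilibrium hypothesis to eliminate the second term. Because $p$ is an equilibrium, $F_i(p) = \bigl((Kp)_i - p^\tr H p\bigr)p_i = 0$ for every~$i$; since $p_i \neq 0$ by assumption, this forces $(Kp)_i - p^\tr H p = 0$, so the $\delta_{im}$ contribution vanishes at~$p$. Collecting the surviving entries into a row and using that $H+H^\tr$ is symmetric (so that $\bigl((H+H^\tr)p\bigr)_m = \bigl(p(H+H^\tr)\bigr)_m$) gives $(J_p)_i = \bigl((K)_i - p(H+H^\tr)\bigr)p_i$, as claimed. This argument is essentially a routine differentiation; the only point requiring care is recognizing that the nonvanishing of the $p_i$ together with the equilibrium condition is precisely what kills the diagonal term, and I expect that observation to be the one genuinely substantive step rather than any technical obstacle.
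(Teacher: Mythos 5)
Your computation is correct and is exactly the ``straightforward calculation'' the paper alludes to without writing out: the paper gives no proof of this lemma, and your entrywise differentiation, the identity $\partial(p^\tr Hp)/\partial p_m = ((H+H^\tr)p)_m$, and the use of $p_i\neq 0$ at equilibrium to kill the $\delta_{im}$ term supply precisely the intended argument. No gaps.
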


We show two examples of system~\eqref{eq:general_vdif_rd}, one with no  nonpairwise interactions and one with nonpairwise interactions, admitting an equilibrium whose stability does depend on the nonpairwise interactions terms. 

\begin{exams} \label{ex:double}
Consider the system~\eqref{eq:general_vdif_rd} where $n =4$ and 
\[
K =  \frac{1}{2} 
\left[
\begin{array}{rrrr}
0 & -1 & 2 & -1 \\
1 & 0 & 0 & -1  \\
-2 & 0 & 0 & 2 \\
1 & 1 & -2 & 0  
\end{array}
\right]
\, .
\]
Note that $K$ is a skew symmetric matrix. The eigenvalues of~$K$ are $\lambda=0$ (double) and a pair of nonzero imaginary eigenvalues $\lambda=\pm i \sqrt{11}/2$. Moreover, 
\[
W_0 = \langle  (1,1,1,1), (0,2,1,0)\rangle\, .
\]
(a) Assume that in~\eqref{eq:general_vdif_rd} there are no nonpairwise interactions, that is, $H = {\bf 0}$.  
We have that $p^* = \frac{1}{4} (1,1,1,1)$ is an equilibrium of the system~\eqref{eq:general_vdif_rd} with stability determined by~$K$ (by  Lemma~\ref{lem:lin_pnot=0}), that is, the equilibrium~$p^*$ has neutral linear stability in the sense that all eigenvalues have zero real part. \\
(b)~Assume now the existence of nonpairwise interactions given by the symmetric matrix 
\[
H =  \left[
\displaystyle  
\begin{array}{rrrr}
2 & -1 & 1 & -2 \\
-1 & 2 & -2 & 1  \\
1 & -2 & 2 & -1 \\
-2 & 1 & -1 & 2  
\end{array}
\right]\, .
\]
Note that~$H$ has eigenvalues $\lambda=0$ (double) and $\lambda=2,\lambda=6$. Moreover, 
\[
W_0 = \langle  (1,1,1,1), (1,0,0,1)\rangle.
\]
We have that $p^* = \frac{1}{4} (1,1,1,1)$ is also an equilibrium of the system~\eqref{eq:general_vdif_rd}. Its (linear) stability is given by Lemma~\ref{lem:lin_pnot=0}.  More precisely, the linear stability of~$p^*$ is determined by 
\[
J_{p^*}  =  \frac{1}{4} \left(K  - \frac{1}{2}  H   \right)  
= 
\displaystyle \frac{1}{8} 
\left[ 
\displaystyle 
\begin{array}{rrrr}
-2 & 0 & 1 & 1 \\
2 & -2 & 2 & -2  \\
-3 & 2 & -2 & 3 \\
3 & 0 & -1 & -2  
\end{array}
\right],
\]
which has a zero eigenvalue, a negative real eigenvalue, and a pair of complex eigenvalues with negative real part. Thus, the equilibrium~$p^*$ is (linearly) stable in the directions transverse to the diagonal~$\langle (1,1,1,1)\rangle$---these are the direction transverse to the synchrony subspace where all cells are synchronized. 
\hfill $\diamond$ 
\end{exams}

Nevertheless, we see next an example where the nonpairwise interactions exist and do not change the stability of the equilibrium. 

\begin{exam}
Consider the system~\eqref{eq:general_vdif_rd} with $n=4$ and 
\[
K = H = 
\left[
\begin{array}{rrrr}
1 & -1 & 1 & -1 \\
1 & 1 & -1 & -1  \\
-1 & 1 & 1 & -1 \\
1 & 1 & 1 & -3  
\end{array}
\right]\, .
\]
Note that $\det(K) =0$ and $\ker(K) = W_0 = \langle (1,1,1,1)\rangle$. Equations~\eqref{eq:general_vdif_rd} evaluate to
\begin{equation} \label{eq:diagonal_pert_eqs}
\dot{p}_i = \left( (K p)_i  - \left( p_1^2 + p_2^2 + p_3^2 -3p_4^2\right) \right) p_i
\end{equation}
for $i=1,2,3,4$.
Although the quadratic form $p \mapsto p^\tr K p = p_1^2 + p_2^2 + p_3^2 -3p_4^2$ is not identically null, it vanishes at $p \in \ker(K)$. 
We have that $p^* = \frac{1}{4}(1,1,1,1)$ is the unique equilibrium~$p$ of system~\eqref{eq:diagonal_pert_eqs} with $p_i >0$ for $i=1, \dotsc, 4$. Note that~$K$ has eigenvalues $\lambda \in \sset{0,-2, 1\pm i \sqrt{3}}$ and $W_{-2} = \langle (1,1,1,3)\rangle$. Thus 
\[
\Delta = \set{p}{p_1 = p_2 = p_3} = W_0 \oplus W_{-2} 
\]
is a synchrony space for~$K$ and thus, by Lemma~\ref{lem:syn_spa}, also for the system~\eqref{eq:diagonal_pert_eqs}. Moreover, 
\[
K + K^\tr = 
\left[
\begin{array}{rrrr}
2 & 0 & 0 & 0 \\
0 & 2 & 0 & 0  \\
0 & 0 & 2 & 0 \\
0 & 0 & 0 & -6  
\end{array}
\right]\, .
\]
By Lemma~\ref{lem:lin_pnot=0}, the linear stability of the equilibrium $p = \frac{1}{4} (1,1,1,1)$ of the system~\eqref{eq:diagonal_pert_eqs} is determined by the Jacobian matrix
\[
J_p = \displaystyle \frac{1}{4} 
\left( K - \frac{1}{4} 
\left[
\begin{array}{rrrr}
2 & 2 & 2 & -6 \\
2 & 2 & 2 & -6 \\
2 & 2 & 2 & -6 \\
2 & 2 & 2 & -6 
\end{array}
\right]\
 \right)= 
 \frac{1}{16} 
 \left[
\begin{array}{rrrr}
2 & -6 & 2 & 2 \\
2 & 2 & -6 & 2 \\
-6 & 2 & 2 & 2 \\
2 & 2 & 2 & -6 
\end{array}
\right],
\]
which has eigenvalues $0, -\frac{1}{2}$, and $\frac{1}{4}(1\pm i \sqrt{3})$. That is, it has the same stability as for the system without nonpairwise 
interactions, $H={\bf 0}$. \hfill $\Diamond$ 
\end{exam}

\section{Discussion} \label{sec:discussion}

\begin{figure}
\begin{tikzpicture}
 [scale=.15,auto=left, node distance=1.5cm, 
 ]
  \node[fill=white,style={circle,draw}] (n9) at (-36,9)  {\small{9}};
 \node[fill=blue!70,style={circle,draw}] (n10) at (-36,0) {\tiny{10}};
 \node[fill=blue!70,style={circle,draw}] (n11) at (-36,-9)  {\tiny{11}};
 \node[fill=pink,style={circle,draw}] (n12) at (-24,0) {\tiny{12}};
 \node[fill=white,style={circle,draw}] (n8) at (-6,-10)  {\small{8}};
 \node[fill=white,style={circle,draw}] (n1) at (-12,9)  {\small{1}};
 \node[fill=white,style={circle,draw}] (n2) at (-12,0) {\small{2}};
 \node[fill=blue!70,style={circle,draw}] (n3) at (-12,-9)  {\small{3}};
 \node[fill=pink,style={circle,draw}] (n4) at (0,0) {\small{4}};
  \node[fill=blue!70,style={circle,draw}] (n5) at (12,9)  {\small{5}};
 \node[fill=blue!70,style={circle,draw}] (n6) at (12,0)  {\small{6}};
  \node[fill=blue!70,style={circle,draw}] (n7) at (12,-9)  {\small{7}};
\path
        (n9) [->]  edge[bend right=10,thick] node[above] {{\tiny $1$}}   (n12)
        (n10) [->]  edge[thick]   node[above] {{\tiny $1$}}   (n12)
         (n11) [->]  edge[bend left=10,thick]  node[above] {{\tiny $1$}}  (n12) 
        (n1) [->]  edge[bend left=10,thick] node[above] {{\tiny $1$}}  (n12)
        (n2) [->]  edge[thick]  node[above] {{\tiny $1$}} (n12)
         (n3) [->]  edge[bend right=10,thick]  node[above] {{\tiny $1$}} (n12)          
        (n1) [->]  edge[bend right=20,thick] node[above] {{\tiny $1$}}  (n4)
        (n2) [->]  edge[bend right=20, thick]  node[above] {{\tiny $1$}}   (n4)
         (n8) [->]  edge[bend left=20,thick]  node[above] {{\tiny $1$}}  (n4)       
         (n5) [->]  edge[bend left=10,thick] node[above] {{\tiny $1$}}  (n4)
         (n6) [->]  edge[bend right=10,thick]  node[above] {{\tiny $1$}}  (n4)
         (n7) [->]  edge[bend right=10,thick]  node[above] {{\tiny $1$}}  (n4)  
;
 \end{tikzpicture}  
 \caption{The equivalence relation with three classes represented by the three colours is balanced for the network.}\label{hyp_more_notbal}
 \end{figure}
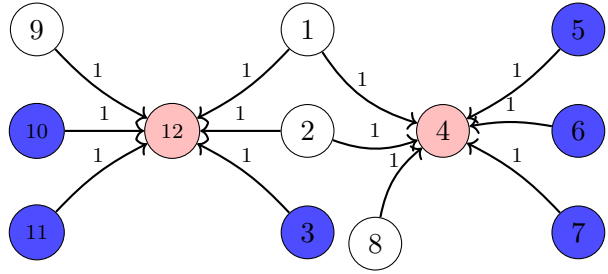

Here we developed a framework for coupled cell systems with higher-order interactions. In contrast to other approaches to dynamics on hypergraphs---including~\cite{Mulas2020,Salova2021}---our framework allows for directionality of the interactions and coupling weights. 
The framework is restricted by the assumption of homogeneity in the $k$th order coupling: The interaction is mediated by a single coupling function~$Q_k$ for any edge of tail size~$k$. These assumptions do shape the set of admissible vector fields. Recall the hypernetwork of
Example~\ref{example_b_notb}(ii), which is depicted in Figure~\ref{hyp_notbal}. As an example, the admissible evolution equations for nodes~$4$ and~$12$ take the shape
\begin{align*}
\dot{x}_4 &= f(x_4)+Q_3(x_4;x_1,x_2,x_8)+Q_3(x_4;x_5,x_6,x_7),\\
\dot{x}_{12} &= f(x_{12})+Q_3(x_{12};x_1,x_2,x_3)+Q_3(x_{12};x_9,x_{10},x_{11})\, .
\end{align*}
By contrast, if we forget the hyperedge structure and consider the related network shown in Figure~\ref{hyp_more_notbal} then the equations for cells~$4$ and~$12$ in the formalism of  Golubitsky, Stewart and collaborators~\cite{SGP03, GST05} have the form
\begin{align*}
\dot{x}_4 &= g(x_4; {x_1, x_2, x_5, x_6, x_7, x_8}),\\
\dot{x}_{12} &= g(x_{12}; {x_1, x_2, x_3, x_9, x_{10}, x_{11}}),
\end{align*}
where~$g$ is invariant under permutations of the last six arguments.
Even though the combinatorial representation of the equations is a network (a directed graph), the admissible vector fields that are determined by the interaction function~$g$ can have nonlinear dependencies between the cell coordinates~$x_k$.
By contrast, in the additive input setup~\cite{F15, BF17, ADF17, AD18}  
no nonlinear interactions beyond pairs of cells are possible and the admissible equations for cells~$4$ and~$14$ have the form
\begin{align*}
\dot{x}_4 &= f(x_4) + h(x_4, x_1) + h(x_4, x_2) + h(x_4,  x_5)\\&\qquad\quad + h(x_4, x_6) + h(x_4, x_7) + h(x_4, x_8), \\
\dot{x}_{12} &= f(x_{12}) + h(x_{12}, x_1) + h(x_{12}, x_2) + h(x_{12}, x_3)\\&\qquad\quad + h(x_{12}, x_9) + h(x_{12}, x_{10})+ h(x_{12}, x_{11}).
\end{align*}
The admissible vector fields of our framework are richer than the additive setup. Moreover, they explicitly capture higher-order interaction structure, which is only implicit in the classical formalism of Golubitsky, Stewart, and collaborators but important from a dynamical point of view; cf.~Section~\ref{sec:examples}.

What is an appropriate combinatorial structure to encode higher-order interactions in network dynamical systems (cf.~\cite{Bick2021})? The framework developed above is phrased in terms of (directed) hypergraphs. 
First, the hypergraphs employed are nonstandard: The tails of each hyperedge is a multiset rather than a set. This is crucial to define a quotient of a hypernetwork 
without making further assumptions on the coupling functions as arguments on the synchrony subspace can appear multiple times.
Second, different hypergraphs can represent the same coupled cell hypernetwork. This is due to the fact that hyperedge-heads can contain more than one element 
{which may allow} to easily identify symmetries (cf.~Proposition~\ref{prop:FullSym}).

It is worth pointing out that in the formalism developed above we typically consider all admissible vector fields at the same time. More specifically, we ask: What are the dynamical features of all ordinary differential equations (ODE) that are compatible with the hypernetwork structure? This elucidates the constraints network structure imposes. For example, Theorem~\ref{thm:sync_bal} allows to translate structural properties (balanced relations on a hypergraph) into dynamical properties (any ODE consistent with the hypernetwork will have a particular synchrony subspace). Consequently, these properties are not specific to any choice of coupling function.
While this is the same approach as in traditional coupled cell systems, the approach is in contrast to some applications where a fixed coupling function is considered: A specific coupling function may be imposed by a particular physical system. But a nongeneric choice of coupling function can lead to nongeneric dynamical behavior and nonproper hypernetwork couplings (Definition~\ref{def:Proper}).

The importance of higher-order interactions in network dynamical systems has repeatedly been highlighted.
The framework presented here bridges coupled cell systems and higher-order interaction networks. Specifically, it allows to characterize synchrony patterns (whether global or localized/clustered). While other approaches are possible, our framework strikes a balance between generality and results that can elucidate synchronization phenomena in real-world systems.

\vspace{5mm}

\noindent {\bf Acknowledgments.}
MA and AD were partially supported by CMUP (UID/MAT/00144/2013), which is funded by FCT (Portugal) with national (MEC) and European structural funds (FEDER), under the partnership agreement PT2020. 
CB acknowledges support from the Engineering and Physical Sciences Research Council (EPSRC) through the grant EP/T013613/1.

\vspace{5mm}

\bibliographystyle{unsrt}
\bibliography{biblio} 

\end{document}